\newcommand{\longstep}{\rightsquigarrow}
\newcommand{\rP}[1]{w(#1)}
\newlength\mylength
\def\mylongmapsto#1{%
\begin{tikzpicture}
\draw[line width = 0.75pt] (0,0.5mm) -- (0,-0.5mm);
\setlength{\mylength}{\widthof{#1}}
\draw[line width = 0.75pt, ->] (0,0) -- (1.2\mylength,0) node[above,midway] {#1};
\end{tikzpicture}
}
\newcommand{\dmapsto}{\;\raisebox{0.5pt}{\mylongmapsto{\quad}}}
\newtheorem{theorem}{Theorem}[section]
\newtheorem*{theorem*}{Theorem}
\newtheorem{corollary}[theorem]{Corollary}
\newtheorem{proposition}[theorem]{Proposition}
\newtheorem{conjecture}[theorem]{Conjecture}
\newtheorem{lemma}[theorem]{Lemma}
\numberwithin{equation}{section}
\theoremstyle{definition}
\newtheorem{definition}[theorem]{Definition}
\newtheorem{remark}[theorem]{Remark}
\newtheorem{example}[theorem]{Example}
\newcommand{\N}{\mathbb{N}}
\newcommand{\R}{\mathbb{R}}
\newcommand{\norm}[1]{N(#1)}
\renewcommand{\epsilon}{\varepsilon}
\newcommand{\mbinom}[2]{\left(\!\!\hskip-0.5pt\binom{#1}{#2}\!\!\hskip-0.5pt\right)}
\newcommand{\smbinom}[2]{\bigl(\!\bigl(\genfrac{}{}{0pt}{}{#1}{#2}\bigr)\!\bigr)}
\DeclareMathOperator{\tr}{tr}
\renewcommand{\theta}{\vartheta}
\renewcommand{\d}{\,\mathrm{d}}
\newcommand{\sfrac}[2]{{\scriptstyle\frac{#1}{#2}}}
\newcommand{\mfrac}[2]{{\textstyle\frac{#1}{#2}}}
\newcounter{thmlistcnt}
\newenvironment{thmlist}%
	{\setcounter{thmlistcnt}{0}%
	\begin{list}{\emph{(\roman{thmlistcnt})}}{%
		\usecounter{thmlistcnt}%
		\setlength{\topsep}{0pt}%
		\setlength{\leftmargin}{0pt}%
		\setlength{\itemsep}{0pt}%
		\setlength{\labelwidth}{17pt}
		\setlength{\itemindent}{30pt}}%
	}%
	{\end{list}}%
\newcommand{\alphac}[1]{\alpha^{\raisebox{-1pt}{\scalebox{0.95}{$\scriptstyle(#1)$}}}}
\newcommand{\betac}[1]{\beta^{\raisebox{-1pt}{\scalebox{0.95}{$\scriptstyle(#1)$}}}}
\newcommand{\gammai}[1]{\gamma^{\raisebox{-1pt}{\scalebox{0.95}{$\scriptstyle(#1)$}}}}
\newcommand{\gammac}[2]{\gamma^{\raisebox{-1pt}{\scalebox{0.95}{$\scriptstyle(#1,#2)$}}}}
\newcommand{\deltac}[2]{\delta^{\raisebox{-1pt}{\scalebox{0.95}{$\scriptstyle(#1,#2)$}}}}
\newcommand{\kappac}[2]{\kappa^{\raisebox{-1pt}{\scalebox{0.95}{$\scriptstyle(#1,#2)$}}}}
\newcommand{\lambdac}[3]{\lambda^{\raisebox{-1pt}{\scalebox{0.95}{$\scriptstyle(#1,#2)$}}}_{#3}}
\renewcommand{\L}{\mathrm{L}}
\renewcommand{\H}{\mathcal{H}}
\newcommand{\n}{n} 
\newcommand{\DS}{H}
\newcommand{\DSL}{\DS^\lambda}
\newcommand{\PL}{P^\lambda}
\newcommand{\eqkey}{($\star$)}
\newcommand{\Rg}{\mathbb{R}^{> -1}}
\newcommand{\Diag}{\diag}
\renewcommand{\le}{\leqslant}
\renewcommand{\ge}{\geqslant}
\newcommand{\diag}{\mathrm{Diag}}
\renewcommand{\ne}{\{0,\ldots, n-1\}}
\renewcommand{\t}{\mathrm{t}} 
\newcommand{\HM}{L}
\newcommand{\uw}{u}
\newcommand{\FF}{\mathbb{F}}
\subjclass[2010]{Primary: 15B51, Secondary: 05A10, 15B51, 60G10, 60J05}
\keywords{Random walk, involution, eigenvector, eigenvalue, anti-diagonal eigenvalue property, binomial transform, spectrum}
\begin{document}
\title[]{Involutive random walks on \\ total orders and the anti-diagonal eigenvalue property}
\author{John R.~Britnell and Mark Wildon}
\date{\today}

\begin{abstract}
This paper studies a family of random walks defined on the finite ordinals using their order reversing involutions. Starting at $x \in \{0,1,\ldots,n-1\}$, an element $y \le x$ is chosen according to a prescribed probability distribution, and the walk then steps to $n-1-y$. We show that under very mild assumptions these walks are irreducible, recurrent and ergodic. We then find the invariant distributions, eigenvalues and eigenvectors of a distinguished subfamily of walks whose transition matrices have the global anti-diagonal eigenvalue property studied in earlier work by Ochiai, Sasada, Shirai and Tsuboi. We prove that this subfamily of walks is characterised by their reversibility.
As a corollary, we obtain the invariant distributions and rate of convergence  of the random walk on the set of subsets of $\{1,\ldots, m\}$ in which steps are taken alternately to subsets and supersets, each chosen equiprobably. We then consider analogously defined random walks on the real interval $[0,1]$ and use techniques from the theory of self adjoint compact operators on Hilbert spaces to prove analogues of the main results in the discrete case.
\end{abstract}

\subjclass[2010]{Primary: 60J05, Secondary: 11B65, 15A18, 46E20}

\maketitle
\thispagestyle{empty}

\section{Introduction}

In this paper we study a family of random walks defined on the finite ordinals using their
order reversing involutions: starting at $x \in \{0,1,\ldots,n-1\}$, an element $y \le x$ is chosen according to a
prescribed probability distribution, and the walk then steps to $n-1-y$. Under very mild assumptions
we show that these walks are irreducible; if irreducible, 
they are recurrent and ergodic with a unique invariant distribution. 
We then consider a distinguished subfamily of walks whose transition matrices
have the
global anti-diagonal eigenvalue
property studied in earlier work by Ochiai, Sasada, Shirai and Tsuboi \cite{OchiaiEtAl}.
We find their invariant distributions,
eigenvectors and eigenvalues and show that this subfamily is characterised by their reversibility.
We also give a new proof of one of the main results of \cite{OchiaiEtAl} which characterises the generic
(in the sense of algebraic geometry) matrices having the global anti-diagonal eigenvalue property,
and prove a related characterisation that avoids the genericity hypothesis.
As a corollary of our probabilistic results
we obtain the invariant distributions and rate of convergence 
of the random walk on the set of subsets of $\{1,\ldots, m\}$
in which steps are defined by moving to a subset and then to a superset.

In the second part we consider the analogous involutive walks on the real interval $[0,1]$ and use 
techniques from the theory of self-adjoint compact operators on Hilbert spaces to prove
analogues of the main results  in the discrete case. We also prove further results on a
trigonometrically-weighted random walk that do not appear to have discrete analogues
and consider the interplay between the discrete and continuous results.

\subsubsection*{Setup}
Let $[y,x]$ denote the interval $\{y,y+1,\ldots, x\} \subseteq \N_0$ and let $[x]$ denote $\{0,1,\ldots, x\}$.
Let $\star : \{0,1,\ldots,n-1\} \rightarrow \{0,1,\ldots,n-1\}$
be the order anti-involution defined by  $x^\star = n-1-x$; the $n$ will always be clear from context.
It is very convenient to specify the probabilities of steps using the following definition.
 
\begin{definition}\label{defn:weight}
A \emph{weight} with domain $\N_0$ is a function $\gamma$
on the set of intervals of~$\N_0$, taking values in the non-negative real numbers, such that
$\gamma_\varnothing = 0$ and $\sum_{y \in [x]} \gamma_{[y,x]} > 0$ for all $x \in \N_0$.
We say that a weight $\gamma$ is 
\begin{itemize}
\item \emph{strictly positive}
if $\gamma_{[y,x]} > 0$ for all $x$, $y \in n$ with $y \le x$;
\item \emph{atomic} if
$\gamma_{[y,x]} = \gamma_{[y,y]}$ for all $x$, $y \in n$ with $y \le x$; \emph{and}
\item \emph{$\star$-symmetric} if $\gamma_{[y,x]} = \gamma_{[x^\star,y^\star]}$
for all $x$, $y \in n$. 
\end{itemize}
We write $\gamma_y$
for~$\gamma_{[y,y]}$ and $N(\gamma)$ for the strictly positive real-valued function on $\N_0$ defined by
$N(\gamma)_x = \sum_{y \in [x]} \gamma_{[y,x]}$.
\end{definition}

For brevity, when referring to states in an involutive walk, we shall use
the ordinal notation in which $n$ is equal to the subset $\{0,1,\ldots,n-1\}$ of~$\mathbb{N}_0$.
Thus $x \in \{0,1,\ldots, n-1\}$ is written as $x \in n$.
We define a \emph{weight with domain $n$} by replacing the ordinal $\N_0$ with $n$ 
in Definition~\ref{defn:weight}.

\begin{definition}\label{defn:involutiveWalk}
Let $\gamma$ be a weight with domain containing $n$.
The \emph{$\gamma$-weighted involutive walk} on $\n$ 
is the Markov chain with steps defined as follows:
if the current state is $x \in \{0,1,\ldots, n-1\}$ then
choose~$y$ from $[x]$ with probability proportional to $\gamma_{[y,x]}$, and 
step to~$y^\star$.
\end{definition}

Writing $P(\gamma)$ for the transition matrix of the $\gamma$-weighted involutive walk on $n$, Definition~\ref{defn:involutiveWalk}
asserts that
\begin{equation}\label{eq:Pgamma}
P(\gamma)_{xz} = \frac{\gamma_{[z^\star,x]}}{N(\gamma)_x}.
\end{equation}
for all $x$, $z \in n$. (Note that we number rows and columns of matrices from~$0$.)

\subsubsection*{General involutive walks}
Our first main result gives two broadly applicable sufficient conditions for a weighted
involutive walk to be ergodic. 

\begin{samepage}
\begin{theorem}\label{thm:ergodic}
Let $\gamma$ be a weight with domain containing $n$. If \emph{either}
\begin{thmlist}
\item $\gamma_{[0,x]} > 0$ and $\gamma_{[x,n-1]} > 0$ for all $x \in n$; \emph{or}
\item $\gamma_{[x,x]} > 0$ for all $x \in n$ and $\gamma_{[x-1,x]} > 0$ for all non-zero $x \in n$
\end{thmlist}
then the $\gamma$-weighted involutive walk on $n$ is irreducible, recurrent and ergodic
with a unique invariant distribution.
\end{theorem}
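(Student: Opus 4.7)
The plan is to reduce both assertions to establishing irreducibility and aperiodicity of the transition matrix $P(\gamma)$, since any finite irreducible aperiodic Markov chain is automatically positive recurrent and ergodic with a unique invariant distribution.

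Under hypothesis (i), the formula~\eqref{eq:Pgamma} shows that $P(\gamma)_{x,n-1}$ is a positive scalar multiple of $\gamma_{[0,x]}$ and that $P(\gamma)_{n-1,z}$ is a positive scalar multiple of $\gamma_{[z^\star,n-1]}$. The two positivity assumptions in (i) thus give, in one step, a transition from every state to $n-1$, and from $n-1$ to every state; this establishes irreducibility. Taking $z = n-1$ also gives a self-loop at $n-1$ (coming from $\gamma_{[0,n-1]}>0$), whence aperiodicity.

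Under hypothesis (ii), the hypothesis provides two families of one-step moves via~\eqref{eq:Pgamma}: the move $x \mapsto x^\star$ (using $\gamma_{[x,x]}$ in the numerator, valid for every $x \in n$) and the move $x \mapsto (x-1)^\star = n-x$ (using $\gamma_{[x-1,x]}$, valid for $x > 0$). Concatenating these yields the two-step decrement $a \mapsto n-a \mapsto a-1$ for $1 \le a \le n-1$, so from any state there is a path down to $0$. Conversely, alternating the two moves starting at $0$ produces a path $0 \to n-1 \to 1 \to n-2 \to 2 \to n-3 \to \cdots$ that visits every state of $n$. Hence the chain is irreducible. Writing $m = \lfloor n/2 \rfloor$, one checks that $m^\star = m$ when $n$ is odd and $m^\star = m-1$ when $n$ is even, so in each case $\gamma_{[m^\star,m]}$ is one of the weights postulated to be positive in (ii); thus $P(\gamma)_{m,m}>0$, giving a self-loop and hence aperiodicity.

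The main (mild) obstacle lies in (ii), namely in correctly exhibiting the short paths that realise a $+1$ shift (from $0$, via the alternating forward path) and a $-1$ shift (via the two-step decrement) on the state space. Once these are isolated, both conclusions follow from the standard theory of finite ergodic Markov chains.
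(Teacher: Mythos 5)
Your proposal is correct and follows essentially the same route as the paper: in case (i) one-step access to and from $n-1$ together with the self-loop $n-1 \mapsto 0^\star = n-1$, and in case (ii) the two elementary moves $x \mapsto x^\star$ and $x \mapsto (x-1)^\star$ combined into a zig-zag path through all states (the paper isolates this as a separate lemma), plus the self-loop at $\lfloor n/2 \rfloor$ for aperiodicity. Both then conclude by the standard ergodic theorem for finite irreducible aperiodic chains.
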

\end{samepage}

We are particularly concerned with reversible involutive walks.

\begin{theorem}\label{thm:factorization}
Let $\gamma$ be a weight satisfying either condition in Theorem~\ref{thm:ergodic}. The
$\gamma$-weighted involutive walk on~$\n$ is reversible if and only if $\gamma$ factorizes as a product $\alpha\beta$ where
$\alpha$ is an  atomic weight and $\beta$ is a $\star$-symmetric weight. Moreover, in this case the
unique invariant distribution is proportional to $\alpha_{x^\star}N(\alpha \beta)_x$.
\end{theorem}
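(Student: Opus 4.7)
The proof turns on the detailed balance characterization of reversibility. By Theorem~\ref{thm:ergodic} the walk has a unique invariant distribution $\pi$, which is strictly positive, and is reversible if and only if $\pi_x P(\gamma)_{xz} = \pi_z P(\gamma)_{zx}$ for all $x,z \in n$. From~\eqref{eq:Pgamma} the entry $P(\gamma)_{xz}$ is nonzero exactly when $z \ge x^\star$, so upon substituting $y = z^\star$ the detailed balance condition becomes, for all $y \le x$,
\begin{equation*}
\frac{\pi_x}{N(\gamma)_x}\,\gamma_{[y,x]} \;=\; \frac{\pi_{y^\star}}{N(\gamma)_{y^\star}}\,\gamma_{[x^\star,y^\star]}.
\tag{$\dagger$}
\end{equation*}
This is the single identity I will use in both directions.

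For the \emph{if} direction, suppose $\gamma = \alpha\beta$ with $\alpha$ atomic and $\beta$ $\star$-symmetric, and set $\pi_x = \alpha_{x^\star} N(\gamma)_x$ (normalized to sum to~$1$). Since $\alpha$ is atomic, $\gamma_{[y,x]} = \alpha_y \beta_{[y,x]}$, so the left-hand side of~$(\dagger)$ equals $\alpha_{x^\star}\alpha_y \beta_{[y,x]}$ and the right-hand side equals $\alpha_y \alpha_{x^\star} \beta_{[x^\star,y^\star]}$; these agree by $\star$-symmetry of $\beta$. Detailed balance implies invariance via the standard one-line calculation $\sum_x \pi_x P_{xz} = \pi_z \sum_x P_{zx} = \pi_z$, and uniqueness in Theorem~\ref{thm:ergodic} then identifies this $\pi$ as \emph{the} invariant distribution, giving the stated formula.

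For the \emph{only if} direction, assume the walk is reversible with invariant distribution $\pi$, which is strictly positive by irreducibility. Define the atomic weight $\alpha$ by $\alpha_x = \pi_{x^\star}/N(\gamma)_{x^\star}$ (so $\alpha_x > 0$), and set $\beta_{[y,x]} = \gamma_{[y,x]}/\alpha_y$; by construction $\beta_\varnothing = 0$ and $\gamma = \alpha\beta$. Rewriting $(\dagger)$ as $\alpha_{x^\star}\gamma_{[y,x]} = \alpha_y\gamma_{[x^\star,y^\star]}$ and dividing by $\alpha_y\alpha_{x^\star}$ yields $\beta_{[y,x]} = \beta_{[x^\star,y^\star]}$, so $\beta$ is $\star$-symmetric.

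The only delicate point is that the definition of $\alpha$ in the converse requires $\pi_{x^\star} > 0$, which is precisely the strict positivity of the invariant distribution guaranteed by the irreducibility in Theorem~\ref{thm:ergodic}. Everything else is straightforward algebraic manipulation of $(\dagger)$ combined with the definitions of ``atomic'' and ``$\star$-symmetric.''
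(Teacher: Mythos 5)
Your proof is correct and follows essentially the same route as the paper: the converse direction is exactly the paper's construction of the atomic weight $\alpha_y = \pi_{y^\star}/N(\gamma)_{y^\star}$ followed by the check that $\beta = \gamma/\alpha$ is $\star$-symmetric, while your forward direction verifies detailed balance for $\pi_x = \alpha_{x^\star}N(\gamma)_x$ directly instead of assembling it from the paper's two auxiliary lemmas (reversibility for atomic and for $\star$-symmetric weights separately, combined via a pointwise-product lemma). The underlying computation is identical, so there is nothing substantive to add.
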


\subsubsection*{Binomial weights}
An important family of strictly positive  weights that factorize as in Theorem~\ref{thm:factorization}
is defined for $a$, $b \in \Rg$ by
\begin{equation}\label{eq:gamma} \gammac{a}{b}_{[y,x]} = \binom{y+a}{y} \binom{b+x-y}{x-y}. \end{equation}
(We define binomial coefficients by $\binom{r}{d} = r(r-1) \ldots (r-d+1)/d!$ for all $r \in \R$ and $d \in \N_0$.)
In particular, $\gamma^{(0,0)}$ is the constant weight, and $\gamma^{(1,0)}$ and $\gamma^{(0,1)}$ are 
the weights specifying that $y \in [x]$ is chosen with probability proportional to $\bigl| [y] \bigr| = y+1$ and 
$\bigl|[y,x]\bigr| = x-y+1$, respectively. 

A natural generalization leads to two more families of weights.
Observe that $\smash{\gammac{a}{b}_{[y,x]}}$
is asymptotic to $a^y b^{x-y}/y!(x-y)!$ as $a$, $b \rightarrow \infty$. Setting $b=ac$ and scaling by $a^x$ 
we obtain $c^{x-y}/y!(x-y)!$. We therefore define
$\gammai{c}$ for $c \in \R^{>0}$ by
\begin{equation}\label{eq:gammaInfinity}
\gammai{c}_{[y,x]} = \binom{x}{y}c^{x-y}.\end{equation}
As a further extension, we use that \smash{$\binom{y-a'}{y} = (-1)^y \binom{a'-1}{y}$}
and $\binom{x-y-b'}{x-y} = (-1)^{x-y} \binom{b'-1}{x-y}$ to extend the weights \smash{$\gamma^{(a,b)}$} to $a$, $b \in \R^{< -1}$, by defining 
\begin{equation}\label{eq:delta} \deltac{a'}{b'}_{[y,x]} = \binom{a'-1}{y}\binom{b'-1}{x-y}\end{equation}
for $a',b' > 1$.
If $b' \not\in \N_0$ then $\deltac{a'}{b'}$ is a strictly positive factorizable weight 
with domain
$\min(\lceil a' \rceil, \lceil b' \rceil)$. If $b' \in \N_0$ then $\deltac{a'}{b'}$ is a non-negative
factorizable weight with domain $\lceil a' \rceil$, satisfying hypothesis (ii) in Theorem~\ref{thm:ergodic}. 
(The domains are chosen
to be as large as possible for these positivity properties: see Lemma~\ref{lemma:deltaDomainSimplified}.) In either case,
\smash{$\deltac{a'}{b'}_{[y,x]}$} is equal to
 \smash{$(-1)^x \gammac{-a}{-b}_{[y,x]}$}, when $\gamma^{(-a',-b')}$ 
is defined as a function on intervals in the obvious way.

To orient the reader we show the 
transition matrices $P(\gamma^{(0,0)})$, $P(\gamma^{(1,0)})$, $P(\gamma^{(0,1)})$, $P(\gamma^{(\sfrac{1}{2})})$,
$P(\gamma^{(2)})$ and $P(\delta^{(4,2)})$ below
for $n=4$. 
To emphasise their characteristic \emph{anti-triangular} structure,
we use dots to denote entries that are zero because the relevant interval is empty. This convention is in force throughout.
\[ \hspace*{-1.0in}\scalebox{0.9}{$\displaystyle \begin{matrix} 0 \\ 1 \\ 2 \\ 3 \end{matrix} \ \ 
\left( \begin{matrix} \cdot & \cdot & \cdot & 1 \\ \cdot & \cdot & \mfrac{1}{2} & \mfrac{1}{2} \\
\cdot & \mfrac{1}{3} & \mfrac{1}{3} & \mfrac{1}{3} \\
\mfrac{1}{4} & \mfrac{1}{4} & \mfrac{1}{4} & \mfrac{1}{4} \end{matrix} \right) \quad
\left( \begin{matrix} \cdot & \cdot & \cdot & 1 \\ \cdot & \cdot & \mfrac{2}{3} & \mfrac{1}{3} \\
\cdot & \mfrac{1}{2} & \mfrac{1}{3} & \mfrac{1}{6} \\
\mfrac{2}{5} & \mfrac{3}{10} & \mfrac{1}{5} & \mfrac{1}{10} \end{matrix} \right) \quad
\left( \begin{matrix} \cdot & \cdot & \cdot & 1 \\ \cdot & \cdot & \mfrac{1}{3} & \mfrac{2}{3} \\
\cdot & \mfrac{1}{6} & \mfrac{1}{3} & \mfrac{1}{2} \\
\mfrac{1}{10} & \mfrac{1}{5} & \mfrac{3}{10} & \mfrac{2}{5} \end{matrix} \right) \quad
\left( \begin{matrix} \cdot & \cdot & \cdot &1 \\
\cdot & \cdot & \mfrac{2}{3} & \mfrac{1}{3} \\
\cdot & \mfrac{4}{9} & \mfrac{4}{9} & \mfrac{1}{9} \\
\mfrac{8}{27} & \mfrac{4}{9} & \mfrac{2}{9} & \mfrac{1}{27} \end{matrix} \right)
\quad
\left( \begin{matrix} \cdot & \cdot & \cdot &1 \\
\cdot & \cdot & \mfrac{1}{3} & \mfrac{2}{3} \\
\cdot & \mfrac{1}{9} & \mfrac{4}{9} & \mfrac{4}{9} \\
\mfrac{1}{27} & \mfrac{2}{9} & \mfrac{4}{9} & \mfrac{8}{27} \end{matrix} \right)
\quad
\left( \begin{matrix} \cdot & \cdot & \cdot & 1 \\
\cdot & \cdot & \mfrac{3}{4} & \mfrac{1}{4} \\
\cdot & \mfrac{1}{2} & \mfrac{1}{2} & 0 \\
\mfrac{1}{4} & \mfrac{3}{4} & 0 & 0 \end{matrix} \right).$}
\]
The symmetry under swapping $a$ and $b$ seen by
comparing $P(\gamma^{(1,0)})$, $P(\gamma^{(0,1)})$ becomes a symmetry under swapping
$c$ and $1/c$, seen by comparing $P(\gamma^{(\sfrac{1}{2})})$ and $P(\gamma^{(2)})$.
Generally, if $m \in \N$ and $m < n$ then $P(\delta^{(a',m)})$ has precisely~$m$ non-zero anti-diagonal bands. 

Remarkably, each matrix $P$ above has eigenvalues $(-1)^d P_{dd^\star}$ for $d \in \{0,1,2,3\}$. Up to signs,
these are the entries on its anti-diagonal.
This is generalized in the final claim in the theorem below.

\begin{theorem}\label{thm:spectrum}
The $\gammac{a}{b}$-, $\deltac{a'}{b'}$- and $\gammai{c}$-weighted involutive walks on any~$n$ in their domain are irreducible, recurrent, reversible 
and ergodic. In each case the unique invariant distribution is $\pi$ where~$\pi_x$ is
proportional to $\binom{n-1-x+a}{n-1-x}\binom{x+a+b+1}{x}$ for $\gammac{a}{b}$,
to $\binom{n-1}{x}\bigl( \frac{c+1}{c}\bigr)^x$ for~$\gammai{c}$,
and to $\binom{a'-1}{n-1-x} \binom{(a'-1)+(b'-1)}{x}$ for $\deltac{a'}{b'}$.
The eigenvalues of $P(\gammac{a}{b})$ are 
\begin{equation} \label{eq:evaluesIntroduction} (-1)^d \binom{a+d}{d}/\binom{a+b+d+1}{d}, \end{equation}
the eigenvalues of $P(\gammai{c})$ are $(-1)^d/c^d$, for $d \in \ne$ and
the eigenvalues of $P(\deltac{a'}{b'})$ are 
$(-1)^d \binom{a'-1}{d} / \binom{(a'-1)+(b'-1)}{d}$. In each case, 
up to the signs $(-1)^d$, these are the anti-diagonal entries of the transition matrix.
\end{theorem}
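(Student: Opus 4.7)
My plan has three parts, corresponding to the four assertions of the theorem: ergodicity and reversibility, the invariant distribution, and the eigenvalues.

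\emph{Ergodicity, reversibility, and invariant distribution.} Both $\gammac{a}{b}$ and $\gammai{c}$ are strictly positive, and $\deltac{a'}{b'}$ satisfies condition~(ii) of Theorem~\ref{thm:ergodic} because $\deltac{a'}{b'}_{[y,y]} = \binom{a'-1}{y}$ and $\deltac{a'}{b'}_{[y-1,y]} = (b'-1)\binom{a'-1}{y-1}$ are strictly positive on the relevant range. Hence Theorem~\ref{thm:ergodic} gives irreducibility, recurrence, and ergodicity for all three. Each weight factorises as $\gamma = \alpha\beta$ with $\alpha$ atomic and $\beta$ $\star$-symmetric: one takes $\alpha_y = \binom{y+a}{y}$ and $\beta_{[y,x]} = \binom{b+x-y}{x-y}$ for $\gammac{a}{b}$ ($\star$-symmetric since $\beta$ depends only on $x-y = y^\star - x^\star$); $\alpha_y = \binom{a'-1}{y}$ with $\beta_{[y,x]} = \binom{b'-1}{x-y}$ for $\deltac{a'}{b'}$; and $\alpha_y = 1/(y!(n-1-y)!)$ with $\beta_{[y,x]} = x!(n-1-y)!\,c^{x-y}/(x-y)!$ for $\gammai{c}$, where $\star$-symmetry is a direct check. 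The totals $N(\gammac{a}{b})_x = \binom{x+a+b+1}{x}$ and $N(\deltac{a'}{b'})_x = \binom{a'+b'-2}{x}$ come from Chu--Vandermonde and $N(\gammai{c})_x = (1+c)^x$ from the binomial theorem; substituting into $\pi_x \propto \alpha_{x^\star} N(\gamma)_x$ from Theorem~\ref{thm:factorization} then yields the invariant distributions.

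\emph{Eigenvalues of $P(\gammac{a}{b})$.} This is the substantive step; the other two families follow by specialisation. The structural observation is the factorisation $P = QJ$, where $J$ is the anti-diagonal flip and $Q = PJ$ is lower triangular with diagonal entries $Q_{dd} = P_{d,d^\star} = \binom{a+d}{d}/\binom{a+b+d+1}{d}$. Reversibility guarantees a real spectrum, but the eigenvalues themselves must be identified by hand. The plan is to construct, for each $d\in n$, an explicit right eigenvector
\[ v^{(d)}_x = \sum_{k=0}^{d} c_{d,k}\binom{x^\star}{k} \]
with coefficients $c_{d,k}$ chosen so that $v^{(d)}$ is a dual Hahn polynomial of degree $d$ in $x^\star$ for the invariant distribution (which is exactly the classical Hahn weight). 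The eigenvalue identity $Pv^{(d)} = (-1)^d Q_{dd}\,v^{(d)}$ is then verified by substituting into $(Pv^{(d)})_x = \sum_{y\le x}\alpha_y \beta_{[y,x]} v^{(d)}_{y^\star}/N(\gamma)_x$, swapping the $y$- and $k$-sums, and collapsing the inner sum by a Chu--Vandermonde or Pfaff--Saalschütz identity. Degree-triangularity of the $v^{(d)}$ in the basis $\{\binom{x^\star}{k}\}_k$ for functions on $n$ gives linear independence, so the $(-1)^d Q_{dd}$ exhaust the spectrum.

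The remaining cases are corollaries. For $P(\gammai{c})$: take $b = ac$ and let $a\to\infty$ after the $a^x$-scaling described before \eqref{eq:gammaInfinity}; both the transition matrix and the eigenvalue $(-1)^d\binom{a+d}{d}/\binom{a+ac+d+1}{d}$ have well-defined limits, the latter matching $(-1)^d P(\gammai{c})_{d,d^\star}$. For $P(\deltac{a'}{b'})$: the identity $\deltac{a'}{b'}_{[y,x]} = (-1)^x\gamma^{(-a',-b')}_{[y,x]}$ noted after \eqref{eq:delta}, together with $\binom{-a'+d}{d} = (-1)^d\binom{a'-1}{d}$ and the analogous identity in $b'$, analytically continues the $\gammac{a}{b}$ formula to produce the claimed value $(-1)^d\binom{a'-1}{d}/\binom{(a'-1)+(b'-1)}{d}$. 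The principal obstacle is the eigenvector construction in the middle step: orthogonal polynomial theory predicts a priori that Hahn polynomials should diagonalise the walk, but packaging the $c_{d,k}$ in a form that makes the underlying hypergeometric identity apply cleanly is where the careful bookkeeping lies.
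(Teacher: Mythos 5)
Your first part is sound and is essentially the paper's argument: Theorem~\ref{thm:ergodic} gives irreducibility, recurrence and ergodicity, the factorizations $\gamma=\alpha\beta$ you list are the ones used in \S\ref{subsec:gammaInfiniteFactorization} and \S\ref{subsec:gammaMatrices}, the normalizations are Lemma~\ref{lemma:chainWeightsTotal}, and Theorem~\ref{thm:factorization} then yields the invariant distributions. Your reductions of the other two families to $\gammac{a}{b}$ are also legitimate, though different from the paper (which treats $\gammai{c}$ directly): continuity of the characteristic polynomial justifies the limit $b=ac$, $a\to\infty$, and the identity $\deltac{a'}{b'}_{[y,x]}=(-1)^x\gammac{-a'}{-b'}_{[y,x]}$ together with polynomiality in $a,b$ (after clearing denominators) handles $\deltac{a'}{b'}$, exactly as in \S\ref{sec:gammaSpectrum}. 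Incidentally, the value $(-1)^d/(c+1)^d$ that your limit produces is the correct spectrum of $P(\gammai{c})$; the $(-1)^d/c^d$ in the theorem statement is a typo, as one checks against the anti-diagonal of $P(\gamma^{(2)})$ for $n=4$.

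The gap is the middle step, which carries all the weight. You propose right eigenvectors $v^{(d)}_x=\sum_k c_{d,k}\binom{x^\star}{k}$ of $P(\gammac{a}{b})$ itself, to be identified with Hahn-type orthogonal polynomials and verified by a Pfaff--Saalsch\"utz collapse, but you never specify the $c_{d,k}$ or perform the verification; as you yourself note, that is exactly where the difficulty lies, so the eigenvalue claim is not proved. This route is workable in principle (the eigenvectors of $P$ are indeed the orthogonal polynomials for the Hahn weight $\pi$, cf.\ \S\ref{subsec:eigenvectors}) but is much harder than necessary, because it attacks the non-triangular matrix $P$ head-on. The paper's trick is to never compute eigenvectors of $P$ at all: writing $P=HJ(n)$ with $H$ the lower-triangular down-step matrix, a one-line Vandermonde computation (Lemma~\ref{lemma:Htransform}) shows $Hv(d)=\lambdac{a}{b}{d}\,v(d)$ where $v(d)_x=\binom{x}{d}$, and since $\binom{n-1-x}{d}$ is a degree-$d$ polynomial in $x$ with leading coefficient $(-1)^d/d!$ we get $J(n)v(d)\in(-1)^dv(d)+\langle v(0),\ldots,v(d-1)\rangle$. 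Hence $P$ preserves the flag of polynomial-valued subspaces and acts on the $d$-th quotient by $(-1)^d\lambdac{a}{b}{d}$ (Proposition~\ref{prop:practical}), which exhibits the spectrum from a triangular action with no hypergeometric identity beyond Vandermonde. To complete your version you would have to produce the $c_{d,k}$ explicitly and push the three-term hypergeometric summation through; until then the central assertion of the theorem is unestablished.
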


If $a$, $b \in \N_0$ then a more convenient
form for the absolute values of the eigenvalues of $P(\gammac{a}{b})$ is
$\binom{a+b+1}{a}/\binom{a+b+d+1}{b+1}$, in which $d$ appears only once.
We show after~\eqref{eq:chainEigenvalues} that, as this form suggests, these eigenvalues are strictly decreasing
in absolute value.
Hence
the second largest eigenvalue in absolute
value is $-(a+1)/(a+b+2)$. This controls the rate of convergence of
the $\gammac{a}{b}$-weighted
involutive walk. More precisely, by Corollary 12.7 in~\cite{LevinPeres},
\[ \lim_{t \rightarrow \infty}\, \Bigl|\Bigl| \frac{P(\gamma^{(a,b)})^t_{xy}}{\pi_y} - 1 \Bigr|\Bigr|^{1/t} = \frac{a+1}{a+b+1} \]
for any $x \in n$, with similar results for $\gammai{c}$ and $\deltac{a'}{b'}$.

\subsubsection*{The anti-diagonal eigenvalue property}
To motivate our second result and explain the second part of the title, we need some further notation.
If an involutive walk steps from $x$ to $y^\star$, then we say that it makes a \emph{down-step} from $x$ to $y$.
Given a weight $\gamma$, we define the \emph{down-step} matrix $\DS(\gamma)$ 
by $\DS(\gamma)_{xy} = \gamma_{[y,x]}/N(\gamma)_x$. Note that $\DS(\gamma)$ is lower-triangular
and $P(\gamma)_{xz} = \DS(\gamma)_{xz^\star}$. Hence if $J(n)$ is
the $n \times n$ matrix defined, using Iverson bracket notation, by
$J(n)_{xy} = [x=y^\star]$, having ones on its anti-diagonal and zeros elsewhere, then $P(\gamma) = \DS(\gamma) J(n)$.
Theorem~\ref{thm:spectrum} asserts that the eigenvalues of $P(\gamma)$ are, up to signs,
 the eigenvalues of $\DS(\gamma)$, for each  weight~$\gamma$.

More generally, the authors of \cite{OchiaiEtAl} say in their Definition~2.3
that a lower-triangular $n \times n$ matrix~$L$ has the \emph{anti-diagonal eigenvalue property}
if $L$ is diagonalizable and the eigenvalues of $LJ(n)$ are precisely $(-1)^d L_{dd}$ for $d \in \n$. By
Theorem~\ref{thm:spectrum}, the down-step matrices $\DS(\gammac{a}{b})$, $\DS(\deltac{a'}{b'})$ and
$\DS(\gammai{c})$ all have this property.
The authors of~\cite{OchiaiEtAl} conclude after (2.8) that
a complete classification of such matrices
is infeasible. 
They therefore introduce the \emph{global anti-diagonal eigenvalue property},
namely that for all $m \le n$, the $m \times m$ top-left submatrix
of~$H$ has the anti-diagonal property. Since the weights $\gammac{a}{b}$, $\gammai{c}$ and $\deltac{a'}{b'}$
are defined without reference to the size of the matrix, the  down-step matrices 
$\DS(\gammac{a}{b})$,
$\DS(\gammai{c})$ and  $\DS(\deltac{a'}{b'})$ all 
have this stronger property.

One of the main results of~\cite{OchiaiEtAl} is Theorem~2.17, that a
lower-triangular matrix $L$ with eigenvalues
$\lambda_0, \lambda_1, \ldots, \lambda_{n-1}$ read from top to bottom, \emph{such that $(\lambda_0,\lambda_1,\ldots,\lambda_{n-1})$ is
not in an exceptional set of measure zero in $\mathbb{R}^n$}
has the global anti-diagonal eigenvalue property
if and only if $L$ is equal to the matrix  $H^\lambda$ defined by
\begin{equation}\label{eq:HL}
H^\lambda= B(n)\Diag(\lambda_0,\ldots,\lambda_{n-1})B(n)^{-1} \end{equation}
where $B(n)$ is the $n \times n$ Pascal's Triangle matrix defined by
$B_{xy} = \binom{x}{y}$. We say that $H^\lambda$ is a \emph{binomial transform}.
The proof of Theorem~\ref{thm:spectrum} shows that
$H(\gammac{a}{b})$, $H(\deltac{a'}{b'})$, $H(\gammai{c})$ are all binomial transforms
and hence have the global anti-diagonal property.

Since~\cite{OchiaiEtAl} appears to be unpublished, in Theorem~\ref{thm:OchiaiEtAl}
we give a new proof of Theorem~2.17 from~\cite{OchiaiEtAl}  using some basic algebraic geometry to characterize
the exceptional set as the vanishing set of a polynomial.  
Example~\ref{ex:GADEPcx} extends the examples in~\cite{OchiaiEtAl} by exhibiting a family of stochastic matrices
that  have the global anti-diagonal eigenvalue property, but not the generic global anti-diagonal eigenvalue property;
thus the genericity assumption is essential.

As an alternative, we say that an $n \times n$ lower-triangular matrix $L$ with diagonal entries
$\lambda_0, \lambda_1, \ldots, \lambda_{n-1}$ has \emph{global anti-diagonal eigenbasis action} 
if  there is a basis of eigenvectors of $L$ such that $LJ(n)$ has an upper-triangular
matrix in this basis with diagonal entries $\lambda_0, -\lambda_1, \ldots, (-1)^{n-1}\lambda_{n-1}$, and the
same holds 
for all the top-left submatrices of $L$. 
Theorem~\ref{thm:GB} 
characterises matrices with global anti-diagonal eigenbasis action as precisely the binomial
transforms, that is,~those of the form~$H^\lambda$. In particular, $H(\gammac{a}{b})$, $H(\deltac{a'}{b'})$, $H(\gammai{c})$ 
all have global anti-diagonal eigenbasis action.

\subsubsection*{Stochastic matrices with global anti-diagonal eigenbasis action}
Motivated by this observation,
in \S\ref{sec:stochastic} we 
characterize all stochastic matrices that are binomial transforms.
Define $P^\lambda = H^\lambda J(n)$.

\newcounter{bTc}
\setcounter{bTc}{\value{theorem}}
\newcommand{\corollaryBinomialTransform}{Let $\lambda_0, \ldots, \lambda_{n-1} \in \R$. 
The matrix $\PL$ is the transition matrix of an involutive walk if and only if
$\lambda_0 = 1$ and 
\[ \sum_{e=0}^{z} (-1)^e \binom{z}{e} \lambda_{z^\star+e} \ge 0 \] 
for all $z \in n$.
Moreover if strict inequality holds for all $z \in n$ then 
$1 > \lambda_1 > \ldots > \lambda_{n-1}$, and
$\PL_{xz} > 0$ if and only if 
$x+z \ge n-1$.}

\begin{corollary}\label{cor:binomialTransformIsInvolutiveWalk}\label{cor:binomialTransform}
\corollaryBinomialTransform
\end{corollary}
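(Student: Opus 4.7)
The plan is to write out $P^\lambda_{xz}$ explicitly and then separate the row-sum and non-negativity conditions. Using the standard inversion $B(n)^{-1}_{xy}=(-1)^{x-y}\binom{x}{y}$ together with the identity $\binom{x}{k}\binom{k}{y}=\binom{x}{y}\binom{x-y}{k-y}$, I compute
\[ H^\lambda_{xy}=\binom{x}{y}\sum_{j=0}^{x-y}(-1)^j\binom{x-y}{j}\lambda_{y+j}. \]
In particular $H^\lambda$ is lower triangular, so $P^\lambda_{xz}=H^\lambda_{xz^\star}=0$ whenever $z^\star>x$, that is, whenever $x+z<n-1$; this is the anti-triangular support pattern that \eqref{eq:Pgamma} demands of a transition matrix of an involutive walk, and conversely any non-negative stochastic matrix with this support arises as $P(\gamma)$ by setting $\gamma_{[y,x]}=P_{x,y^\star}$. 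For the row sums I plan to use $H^\lambda\mathbf{1}=B(n)\Diag(\lambda_0,\ldots,\lambda_{n-1})B(n)^{-1}\mathbf{1}$, noting $B(n)^{-1}\mathbf{1}=e_0$ (since $\sum_y(-1)^{x-y}\binom{x}{y}=[x=0]$) and $B(n)e_0=\mathbf{1}$; thus every row of $H^\lambda$ sums to $\lambda_0$, so stochasticity of $P^\lambda$ is equivalent to $\lambda_0=1$.

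The non-negativity analysis is cleaner in terms of the backward difference $(\Delta\lambda)_m=\lambda_m-\lambda_{m+1}$, for which $(\Delta^k\lambda)_m=\sum_{j=0}^k(-1)^j\binom{k}{j}\lambda_{m+j}$. Substituting $y=z^\star$ in the formula for $H^\lambda_{xy}$ yields
\[ P^\lambda_{xz}=\binom{x}{z^\star}(\Delta^{x-z^\star}\lambda)_{z^\star}\qquad\text{for } x+z\ge n-1. \]
The hypothesis displayed in the corollary, read through $k=z$ and $m=z^\star=n-1-z$, is exactly $(\Delta^k\lambda)_m\ge 0$ for every pair with $m+k=n-1$; equivalently, it is non-negativity of the bottom row $x=n-1$ of $P^\lambda$. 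The main step is to show that this bottom-row assumption forces non-negativity of every other row, which I intend to do via the one-line identity
\[ (\Delta^k\lambda)_m=(\Delta^{k+1}\lambda)_m+(\Delta^k\lambda)_{m+1} \]
obtained from the definition of $\Delta^{k+1}$. Both terms on the right have index sum $m+k+1$, so downward induction on $m+k$, starting at $m+k=n-1$, gives $(\Delta^k\lambda)_m\ge 0$ throughout $m+k\le n-1$. I expect this inductive propagation to be the only substantive step; everything else is index bookkeeping around $z^\star=n-1-z$.

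For the moreover claim, the same recursion with ``a sum of strictly positive numbers is strictly positive'' propagates strict inequalities from $m+k=n-1$ to every $m+k\le n-1$. Specialising to $k=1$ gives $\lambda_m>\lambda_{m+1}$ for $m\in\{0,\ldots,n-2\}$, so $1=\lambda_0>\lambda_1>\cdots>\lambda_{n-1}$. Applied to the displayed formula for $P^\lambda_{xz}$, strict positivity of $(\Delta^{x-z^\star}\lambda)_{z^\star}$ for $x\le n-1$ combined with $\binom{x}{z^\star}>0$ gives $P^\lambda_{xz}>0$ whenever $x+z\ge n-1$; the vanishing for $x+z<n-1$ was recorded at the outset.
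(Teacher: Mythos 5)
Your proposal is correct and follows essentially the same route as the paper: your explicit formula for $H^\lambda_{xy}$ is the paper's Lemma~\ref{lemma:practical}(iv), your difference identity $(\Delta^k\lambda)_m=(\Delta^{k+1}\lambda)_m+(\Delta^k\lambda)_{m+1}$ is exactly the recurrence of Lemma~\ref{lemma:binomialTransformRecurrence} for the normalized entries $H^\lambda_{xy}\binom{x}{y}^{-1}$, and propagating (strict) positivity upward from the bottom row is the content of Lemma~\ref{lemma:nonNegative}. The only cosmetic difference is that you handle the strict case by running the same induction with strict inequalities, where the paper routes it through its zero-propagation lemma.
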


We extend this result in Corollary~\ref{cor:PLergodic} by characterising when the involutive walk defined
by $P^\lambda$ is irreducible, recurrent and ergodic.

It is important to note that \S 2.2 of \cite{OchiaiEtAl} considers the related problem of when a half-infinite
lower-triangular matrix all of whose upper-left submatrices are of the form~\eqref{eq:HL} is stochastic. 
The main result is Theorem 2.34 
which characterises such matrices
as those whose diagonal entries $\lambda_0, \lambda_1, \ldots $
are \emph{completely monotonic}: that is
\[ \sum_{e=0}^k (-1)^e \binom{k}{e} \lambda_{m+e} \ge 0 \]
for all $m$, $k \in \N_0$. The authors
comment at the start of \S 2.2 that `similar results hold in the finite case'. This is true,
but as is clear from Corollary~\ref{cor:binomialTransformIsInvolutiveWalk}, only a one-parameter family of inequalities is required.
Moreover, the inequalities in Corollary~\ref{cor:binomialTransformIsInvolutiveWalk} are specific to each~$n$:  they may
all hold for $\lambda_0, \ldots, \lambda_{n-1}$ and then fail for $\lambda_0, \ldots, \lambda_{n-1}, \lambda_n$.
For instance, this is the case if $\lambda_n$ is large.
Thus it is not possible to obtain our results directly from \cite{OchiaiEtAl};
instead one must adapt the proofs, which require 
the use of the quite deep Hausdorf Moment Theorem.
Another sign that the problems
are somewhat different is that there is no
`limiting case' of the involutive walk, because the ordinal $\N$ has no anti-involution.

The main result in \S\ref{sec:reversibility} further extends Corollary~\ref{cor:binomialTransformIsInvolutiveWalk}.
It is not considered in~\cite{OchiaiEtAl}. We require the following definition.

\begin{definition}\label{defn:globallyReversible}
Let $P$ be the transition matrix of an involutive walk. We say that $P$ is \emph{globally reversible} 
if all the top-right submatrices of $P$ define reversible involutive walks.
\end{definition}

Again, since the weights $\gammac{a}{b}$, $\gammai{c}$ and $\deltac{a'}{b'}$
are defined without reference to the size of the matrix, the transition matrices
$P(\gammac{a}{b})$, $P(\deltac{a'}{b'})$ and
$P(\gammai{c})$ are all globally reversible.
Remarkably, this property characterizes them.

\newcommand{\reversibleImpliesGamma}{Let $n \ge 3$ and suppose that $P^\lambda$ is the transition
matrix of an involutive walk in which $0$
is accessible from every state. 
Then $P^\lambda$ is globally reversible if and only if $P^\lambda = P(\gammac{a}{b})$ for unique $a, b \in \Rg$,
or $P^\lambda = P(\gammai{c})$ for a unique $c \in \R^{>0}$,
or $P^\lambda = P(\deltac{a'}{b'})$ for unique~$a'$, $b' \in \R^{>n-1}$ with either $b' \in \R^{>n-1}$
or $b' \in \N$ and $b' \ge 2$.
}

\begin{theorem}\label{thm:reversibleImpliesGamma}
\reversibleImpliesGamma
\end{theorem}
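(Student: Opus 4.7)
The forward direction is immediate from Theorem~\ref{thm:spectrum}: each of $\gammac{a}{b}$, $\gammai{c}$, $\deltac{a'}{b'}$ is defined on every ordinal in its domain, so every top-right submatrix of the associated transition matrix arises from a weight of the same type and is therefore reversible. For the converse my plan is to extract, from global reversibility alone, a factored form of the weight $\gamma$, and then to combine this with the binomial-transform structure of $P^\lambda$ to pin the factors down to one of the three families.

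I apply Theorem~\ref{thm:factorization} at every size $m\le n$, writing the restriction of $\gamma$ to intervals in $\{0,\ldots,m-1\}$ as $\alpha^{(m)}\beta^{(m)}$ with $\alpha^{(m)}$ atomic and $\beta^{(m)}$ symmetric under the $\star_m$ involution. Normalising $\alpha^{(m)}_0=1$ forces $\beta^{(m)}_{[0,x]}=\gamma_{[0,x]}$, and the $\star_m$-symmetry of $\beta^{(m)}$ then becomes the three-interval identity
\[
\gamma_{[y,x]}\,\gamma_{[m-1-x,m-1]}\,\gamma_{[0,m-1-y]} = \gamma_{[0,x]}\,\gamma_{[y,m-1]}\,\gamma_{[m-1-x,m-1-y]}
\]
for all $0\le y\le x\le m-1$ and each $m\le n$. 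Specialising to $m=x+2$ (with $m=n$ used to treat $x=n-1$) and rearranging gives
\[
\frac{\gamma_{[y,x+1]}}{\gamma_{[y,x]}} = \frac{\gamma_{[1,x+1]}}{\gamma_{[0,x]}}\cdot\frac{\gamma_{[0,x+1-y]}}{\gamma_{[1,x+1-y]}},
\]
a product of a function of $x$ alone and a function of $x+1-y$ alone. Iterating in $x$ with $y$ fixed factors $\gamma_{[y,x]} = A_y\,B_x\,C_{x-y}$ for positive sequences $A$, $B$, $C$, and absorbing $B_x$ into $N(\gamma)_x$ lets me assume without loss that $\gamma_{[y,x]} = A_y\,C_{x-y}$.

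Next I use the hypothesis $P = P^\lambda$. Since the columns of the Pascal matrix $B(n)$ are eigenvectors of $H^\lambda$, the equality $H(\gamma)=H^\lambda$ yields $\sum_y A_y\binom{y}{k}C_{x-y} = \lambda_k\binom{x}{k}N(\gamma)_x$ for all $x,k$; passing to the formal generating functions $A(z)=\sum_y A_y z^y$ and $C(z)=\sum_d C_d z^d$ this becomes
\[ A^{(k)}(z)\,C(z) = \lambda_k\,\bigl(A(z)\,C(z)\bigr)^{(k)}. \]
The case $k=1$ forces $A(z)=K\,C(z)^\mu$ with $\mu = \lambda_1/(1-\lambda_1)$, and substituting into the $k=2$ case reduces everything to the single autonomous ODE $C\,C'' = \rho\,(C')^2$, the constant $\rho$ being computable from $\lambda_1$ and $\lambda_2$. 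Its solutions are precisely $C(z) = K(1-qz)^{-s}$, $C(z) = K\mathrm{e}^{bz}$, and $C(z) = K(1+qz)^{s}$; after fixing $q$ by the available rescaling freedom and combining with $A=KC^\mu$ these reproduce $\gammac{a}{b}$, $\gammai{c}$, and $\deltac{a'}{b'}$ respectively. The higher-order identities $k\ge 3$ are automatic consequences of $A=KC^\mu$ and the ODE, and uniqueness of the parameters follows by reading $\lambda_1,\lambda_2$ off the walk.

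The main obstacle I anticipate is positivity bookkeeping: the ratio derivation divides freely by $\gamma_{[0,x]}$ and $\gamma_{[1,w]}$. The former is justified by the assumption that $0$ is accessible from every state (forcing $\gamma_{[0,x]}>0$), but the latter can fail in the $\deltac{a'}{b'}$ case when $b'\in\N$, because $\deltac{a'}{b'}_{[y,x]}$ vanishes for $x-y\ge b'$. I would handle this by first running the argument on the open stratum where every $\gamma_{[1,w]}$ is strictly positive, covering all of $\gammac{a}{b}$ and $\gammai{c}$ and the generic $\deltac{a'}{b'}$ cases, and then extending to the degenerate $\delta$ case either by continuity in $b'$ or by a separate direct identification using Theorem~\ref{thm:ergodic}(ii). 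The parameter ranges in the statement then follow from requiring $\gamma\ge 0$ and that $P$ be a bona fide stochastic matrix, as encoded by Corollary~\ref{cor:binomialTransformIsInvolutiveWalk}.
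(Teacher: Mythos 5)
Your route is genuinely different from the paper's, which proceeds by induction on $n$, applying Kolmogorov's criterion to the $4$-cycle $n-1 \mapsto 1 \mapsto n-2 \mapsto 2 \mapsto n-1$ to force the single unknown eigenvalue $\lambda_{n-1}$ to agree with that of the weight identified at size $n-1$, with the band-limited case $\nu = 2\mu-1$ handled separately. Your core computation is attractive and, where the divisions are legitimate, correct: applying Theorem~\ref{thm:factorization} at each size $m$ does yield the three-interval identity (one can even avoid division in deriving it, since $\alpha_0>0$), the specialisation $m=x+2$ does give the stated ratio identity, and the resulting factorisation $\gamma_{[y,x]}=A_yC_{x-y}$ combined with $\DS(\gamma)v(k)=\lambda_k v(k)$ for $k=1,2$ does lead to $A=KC^{\mu}$ with $\mu=\lambda_1/(1-\lambda_1)$ and to $CC''=\rho(C')^2$ with the coefficient of $CC''$ equal to $(\lambda_1-\lambda_2)/(1-\lambda_1)\neq 0$; the three solution types match $\gammac{a}{b}$, $\gammai{c}$ and $\deltac{a'}{b'}$. (Two details you should still record: the generating-function identities hold only modulo $z^{n}$, so the ODE is a truncated one --- harmless, since the coefficient recurrence determines $C_2,\ldots,C_{n-1}$ from $C_0$, $C_1$, $\rho$ --- and $\lambda_1>\lambda_2$ must be extracted from Lemma~\ref{lemma:nonNegative}(iv) and Proposition~\ref{prop:PLergodic}.)

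The genuine gap is the degenerate family itself. Your claim that accessibility of $0$ from every state forces $\gamma_{[0,x]}>0$ is false: for $P(\deltac{4}{2})$ with $n=4$ (displayed in the introduction) the state $0$ is accessible from every state, yet the bottom-right entry is $0$, i.e.\ $\gamma_{[0,3]}=0$. More generally, for $\deltac{a'}{m}$ with $m\in\N$ and $m<n$ one has $\gamma_{[y,x]}=0$ whenever $x-y\ge m$, so \emph{both} divisions in your ratio identity fail, not only the one by $\gamma_{[1,w]}$. This case cannot be discarded: the theorem's conclusion explicitly includes $b'\in\N$, $b'\ge 2$, and Proposition~\ref{prop:reversibleImpliesGamma} shows these weights really do occur as globally reversible walks. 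Neither proposed repair is substantiated. A continuity argument in $b'$ would require exhibiting the given matrix as a limit of globally reversible matrices already known to lie in the nondegenerate stratum, which is essentially what is to be proved; and ``a separate direct identification'' is exactly where the difficulty concentrates --- in the paper it occupies Lemma~\ref{lemma:nuLeast} together with the endpoint analysis $\nu=\nu_{n-1}(\mu)$ at the end of \S\ref{subsec:rproof}, including the argument that $n-2<b'<n-1$ is impossible because $\binom{b'-1}{n-1}<0$ would contradict stochasticity. Until the band-limited case is treated by an argument that never divides by a vanishing $\gamma_{[0,x]}$ or $\gamma_{[1,w]}$ (for instance by propagating zeros through the undivided three-interval identity and running your recurrences only on the nonzero band), the proof is incomplete. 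The verification of the parameter ranges and of uniqueness, which you defer to ``positivity bookkeeping'', also still needs the explicit analysis of $a(\mu,\nu)$, $b(\mu,\nu)$ and the exceptional values $\nu_m(\mu)$ carried out in \S\ref{subsec:rinductive}.
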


In fact we prove a somewhat sharper result, stated in Proposition~\ref{prop:reversibleImpliesGamma}, which
implies, for instance,
that if $n=10$ and \smash{$P = P^{(1,\sfrac{2}{3},\nu,\ldots,\lambda_9)}$} 
then $P^\lambda$ is globally reversible if and only if the third largest eigenvalue $\nu$ 
is either in the interval $(\mfrac{1}{3},\mfrac{10}{23})$, or is one of the four exceptional values $\mfrac{10}{23}$, $\mfrac{13}{30}$, $\mfrac{22}{51}$ and~$\mfrac{3}{7}$,
corresponding to the weights $\deltac{17}{9}$, $\deltac{15}{8}$, $\deltac{13}{7}$ and $\deltac{11}{6}$, respectively.
See Example~\ref{ex:reversibleSpectrum} for full details.

We mention that while it is easy to find non-stochastic matrices of the form $\PL$
that satisfy the detailed balanced equations with respect to a $1$-eigenvector, 
the only stochastic examples known to the authors have the
stronger global reversibility property. We therefore make the following conjecture
which we offer as an open problem.

\begin{conjecture}\label{conj:reversible}
An $n \times n$ stochastic matrix of the form $\PL$ is reversible if and only if
 $P^\lambda$ is one of the matrices in Theorem~\ref{thm:reversibleImpliesGamma}, or $P^\lambda = J(n)$.
\end{conjecture}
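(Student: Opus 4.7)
The forward direction is immediate: Theorem~\ref{thm:spectrum} establishes the reversibility of $P(\gammac{a}{b})$, $P(\gammai{c})$ and $P(\deltac{a'}{b'})$, while $J(n) = \PL$ with $\lambda = (1,\ldots,1)$ is easily seen to be reversible with respect to the uniform distribution. My strategy for the converse is to promote reversibility of the full matrix $\PL$ to global reversibility, and then invoke Theorem~\ref{thm:reversibleImpliesGamma}.

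First I would separate the reducible case. If the walk defined by $\PL$ is not irreducible, then some of the inequalities of Corollary~\ref{cor:binomialTransform} are not strict, and a careful analysis of the extremal geometry of the resulting stochastic cone should pin down $\PL = J(n)$ as the only reversible stochastic possibility outside the parametric families. In the irreducible case, define the weight $\gamma$ on $n$ by $\gamma_{[y,x]}=\PL_{x,y^\star}$, so that $\PL=P(\gamma)$. Theorem~\ref{thm:factorization} then produces a factorization $\gamma = \alpha\beta$ with $\alpha$ atomic and $\beta$ $\star$-symmetric, and the invariant distribution is $\pi_x\propto\alpha_{x^\star}N(\gamma)_x$.

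The crucial step is to combine this factorization with the binomial transform identity $\DSL = B(n)\Diag(\lambda_0,\ldots,\lambda_{n-1})B(n)^{-1}$. Writing $\DSL_{xz}=\alpha_z\beta_{[z,x]}/N(\alpha\beta)_x$ on the one hand and $\DSL_{xz} = \sum_{k=z}^{x}(-1)^{k-z}\binom{x}{k}\binom{k}{z}\lambda_k$ on the other, one obtains for each pair $z \le x$ a polynomial relation among $\alpha_z,\ldots,\alpha_x$, $\beta_{[z,x]}$ and $\lambda_z,\ldots,\lambda_x$. I would then try to argue by induction on $n$ that the only solutions $(\alpha,\beta,\lambda)$ satisfying these relations together with $\star$-symmetry of $\beta$ and the stochasticity inequalities of Corollary~\ref{cor:binomialTransform} are the ones arising from $\gammac{a}{b}$, $\gammai{c}$, or $\deltac{a'}{b'}$. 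Equivalently, the inductive goal is to show that reversibility of $\PL$ already forces reversibility of each of its top-right submatrices, in the sense of Definition~\ref{defn:globallyReversible}.

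The main obstacle I foresee is precisely this inductive step, because restriction of $\gamma$ to a smaller index set does not in general preserve $\star$-symmetry with respect to the new involution, so the factorization obtained from reversibility of $\PL$ need not descend to the submatrices. A promising way around this is to exploit the specific rational form of $\lambda_d/\lambda_{d-1}$ for each of the three families (for example, $\lambda_d/\lambda_{d-1}=-(a+d)/(a+b+d+1)$ for $\gammac{a}{b}$, by~\eqref{eq:evaluesIntroduction}) and to show that the reversibility equations, rewritten as ratios, force $(\lambda_d)$ to satisfy one of these three functional identities, after which $(\alpha,\beta)$ is pinned down. As a complementary route, one may view the reversibility equations together with the stochasticity inequalities as cutting out a real semi-algebraic subset of the $(n-1)$-dimensional space of $(\lambda_1,\ldots,\lambda_{n-1})$: computer algebra should confirm the conjecture for small $n$ by exhibiting its components as the three parametric families plus the isolated point $J(n)$, and a uniform description of these components for all $n$ would complete the argument.
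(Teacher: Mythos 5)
This statement is a conjecture, not a theorem: the paper explicitly offers it as an open problem and reports only a computer-algebra verification for $n\le 8$. Your proposal is a strategy sketch rather than a proof, and its central step is left unresolved. The entire content of the conjecture, relative to Theorem~\ref{thm:reversibleImpliesGamma}, is to weaken the hypothesis from \emph{global} reversibility (reversibility of every top-right submatrix, Definition~\ref{defn:globallyReversible}) to reversibility of the single $n\times n$ matrix $P^\lambda$. Your plan is to ``promote reversibility of the full matrix to global reversibility, and then invoke Theorem~\ref{thm:reversibleImpliesGamma}''---but that promotion \emph{is} the conjecture. The induction in the proof of Theorem~\ref{thm:reversibleImpliesGamma} uses the global hypothesis precisely to apply the inductive hypothesis to the top-right $(n-1)\times(n-1)$ submatrix; without it, reversibility of $P^\lambda$ gives you, via Theorem~\ref{thm:factorization}, a factorization $\gamma=\alpha\beta$ with respect to the involution $x\mapsto n-1-x$ only, and (as you correctly observe) this does not descend to the submatrices, whose involutions are different. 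You name this as the ``main obstacle'' but your two proposed workarounds do not overcome it: the first (forcing the ratios $\lambda_d/\lambda_{d-1}$ into one of the three functional forms from the detailed balance equations) cannot succeed from detailed balance alone, since the paper notes that there are non-stochastic matrices $P^\lambda$ satisfying the detailed balance equations that are not in the three families---so stochasticity must enter in an essential and currently unknown way; the second (``a uniform description of these components for all $n$ would complete the argument'') merely restates the problem.

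The reducible case is also not disposed of. Asserting that ``a careful analysis of the extremal geometry of the resulting stochastic cone should pin down $\PL=J(n)$'' is not an argument; by Corollary~\ref{cor:PLergodic} reducibility corresponds to $0$ being inaccessible from some state, and one would at minimum need to analyse which non-strict instances of the inequalities in Corollary~\ref{cor:binomialTransform} are compatible with detailed balance with respect to some invariant distribution (of which a reducible chain may have many). In short: the forward direction of your proposal is correct and matches what the paper already establishes via Theorem~\ref{thm:spectrum}, but the converse direction contains a genuine gap coinciding with the open problem itself.
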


This conjecture has been verified by computer algebra for $n\le 8$.

\subsubsection*{Continuous walks} The continuous analogue of the discrete involutive walk is defined on the interval $[0,1]$.
Starting at a state $x \in [0,1]$, an element $y \in [0,x]$ is chosen according to a probability 
distribution specified by a real weight (as defined in Definition~\ref{defn:realWeight}); the
walk then steps to $1-y$. 
In \S\ref{sec:Hilbert} and~\S\ref{sec:intervalWalk}
we prove analogues of all the results in \S\ref{sec:factorization} and~\S\ref{sec:gamma}
in the continuous setting. We mention here
that, by Theorem~\ref{thm:ctsLP}, 
the analogue of the $\gammac{a}{b}$-weighted involutive walk 
has discrete spectrum $(-1)^d \binom{a+d}{d}/\binom{a+b+d+1}{d}$ 
for $d \in \N_0$;
suitably shifted Jacobi functions form the corresponding complete orthonormal basis of eigenfunctions
for the analogue of left multiplication by the transition matrix.
Thus the right-eigenvectors for the eigenvalues identified in Theorem~\ref{thm:spectrum}
are the discrete analogue of the Jacobi functions. We believe they are of considerable
combinatorial interest.
We identify some of their properties in~\S\ref{subsec:eigenvectors} and compare then
with the Jacobi functions in the final subsection~\S\ref{subsec:discreteContinuous}.
We also use the extended
example in \S\ref{subsec:trigonometricExample} to show
that one plausible generalization of the anti-diagonal eigenvalue property to the continuous case is false:
the continuous case therefore has a richer theory which again deserves further study.

\subsubsection*{Involutive walks on subsets}
Since $P(\gamma)^2 = H(\gamma) \bigl( J(n)H(\gamma)J(n) \bigr)$, 
taking two steps at a time in a discrete involutive walk gives
the Markov chain in which each step is a down-step followed by an up-step.
An immediate corollary of Theorem~\ref{thm:spectrum} is that, when weighted by $\gammac{a}{b}$ (or one of its generalizations)
this down-up walk on $n$ is also irreducible, reversible,
ergodic~and recurrent. Moreover its eigenvalues
are the squares of the eigenvalues in Theorem~\ref{thm:spectrum}.
A similar result holds on~$[0,1]$ and for the up-down walk. 
Thus despite its more intricate definition, the involutive walk
is the more fundamental of the two random processes. 
This is seen in the corollary~below.

\begin{corollary}\label{cor:subsets}
Fix $m \in \N$ and let $0 < p < 1$. 
Let $\mathcal{P}$ be the set of subsets of $\{1,\ldots, m\}$.
\begin{thmlist}
\item The random walk on $\mathcal{P}$ in which, starting from $X \in \mathcal{P}$,
a subset $Y \subseteq X$ is chosen by putting each $x \in X$ in $Y$ independently with probability~$p$,~and the walk then steps to $\{1,\ldots, m\} \backslash Y$, is irreducible, reversible, recurrent and ergodic
with unique invariant distribution $\pi$ where $\pi_X = p^{m-|X|}/(1+p)^m$ for
each $X \in \mathcal{P}$. Its eigenvalues are $(-p)^e$ for $0 \le e \le m$, with multiplicities~$\binom{m}{e}$.
\item Taking two steps at a time in the walk in (i)
gives the walk on $\mathcal{P}$ in which each step is a down-step followed by an up-step.
This walk has the same invariant distribution, and its eigenvalues are $p^{2e}$ for $0 \le e \le m$, again 
with multiplicities $\binom{m}{e}$.
\end{thmlist}
\end{corollary}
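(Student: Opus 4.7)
The plan is to exploit a product structure on $\mathcal{P}$. Identifying $\mathcal{P}$ with $\{0,1\}^m$ via indicator vectors, I would observe that, because the random subset $Y \subseteq X$ is generated by including each $i \in X$ independently with probability $p$ and then $X_{t+1} = \{1,\ldots,m\} \setminus Y$, each coordinate indicator $[i \in X_t]$ evolves as a Markov chain on $\{0,1\}$ independently of the other coordinates. Checking the two cases shows that the per-coordinate transition matrix (with rows and columns indexed by ``in'' and ``out'') is
\[ M = \begin{pmatrix} 1-p & p \\ 1 & 0 \end{pmatrix}, \]
with eigenvalues $1$ and $-p$ (from trace $1-p$ and determinant $-p$) and stationary distribution $(\pi_1,\pi_0) = (1/(1+p), p/(1+p))$; detailed balance is immediate. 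Equivalently, $M = P(\gammai{(1-p)/p})$ on $n = 2$, so the eigenvalues and invariant of $M$ also follow from Theorem~\ref{thm:spectrum}.

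The full transition matrix is then $M^{\otimes m}$, and I would read off its spectrum as products of per-coordinate eigenvalues: for each $S \subseteq \{1,\ldots,m\}$ the choice of $-p$ on the coordinates in $S$ and $1$ elsewhere contributes the eigenvalue $(-p)^{|S|}$, so the eigenvalue $(-p)^e$ arises with multiplicity $\binom{m}{e}$. The invariant distribution is the product of per-coordinate invariants,
\[ \pi_X = \prod_{i \in X} \frac{1}{1+p} \prod_{i \notin X} \frac{p}{1+p} = \frac{p^{m-|X|}}{(1+p)^m}, \]
and per-coordinate detailed balance lifts to detailed balance for $M^{\otimes m}$, giving reversibility. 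Irreducibility follows from the three-step path $A \to \{1,\ldots,m\} \to B^c \to B$ joining any two states (each edge has positive probability because consecutive unions cover $\{1,\ldots,m\}$), and aperiodicity from the self-loop at $\{1,\ldots,m\}$, which occurs with probability $(1-p)^m$; recurrence on a finite irreducible chain and ergodicity then follow.

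For (ii), the two-step matrix $P^2$ has eigenvalues $((-p)^e)^2 = p^{2e}$ with the same multiplicities $\binom{m}{e}$, shares the stationary distribution of $P$, and is irreducible because $P$ is aperiodic. The only substantive step in the whole argument is verifying the tensor decomposition, which reduces the corollary to the two-state calculation above; everything else is standard.
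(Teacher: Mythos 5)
Your proposal is correct and follows essentially the same route as the paper: both reduce to the two-state chain $\bigl(\begin{smallmatrix} 0 & 1 \\ p & 1-p\end{smallmatrix}\bigr)$ (which the paper realises as $P(\gammac{-(1-p)}{-p})$ for $n=2$) and then take the $m$-fold tensor product to read off the invariant distribution, reversibility and the spectrum with multiplicities $\binom{m}{e}$. The only differences are cosmetic — you verify the $2\times 2$ base case and irreducibility by hand where the paper cites Theorems~\ref{thm:ergodic} and~\ref{thm:spectrum} and asserts that these properties pass to products of involutive walks.
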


\subsubsection*{A connection with representation theory}
The transition matrix $P(\gamma^{(0,0)})$ shown earlier when $n=4$
appears in
\cite[Example~7.5]{McDowellTensorProducts} in the context of a random walk on
the indecomposable non-projective representations of $\mathrm{SL}_2(\mathbb{F}_p)$ in characteristic~$p$,
as one of two blocks (after rearranging rows and columns) of the matrix shown in Figure~5 of~\cite{McDowellTensorProducts}, taking
the parameter $n$ 
to be $(p-1)/2$.

\subsubsection*{Outline}
In \S\ref{sec:anti-diagonal} we give a self-contained proof of the characterisation~\eqref{eq:HL} of 
matrices with the generic global anti-diagonal property first stated and proved in~\cite{OchiaiEtAl}.
We then prove our sharper characterisation using the new idea of a global anti-diagonal eigenbasis 
introduced in Definition~\ref{defn:GB}. Readers interested mainly in the applications to probability
need only read as far as Proposition~\ref{prop:practical}.
In \S\ref{sec:factorization} we prove Theorems~\ref{thm:ergodic} and~\ref{thm:factorization}. 
In \S\ref{sec:gammaInfinite} we use Proposition~\ref{prop:practical} to
prove Theorem~\ref{thm:spectrum} for the weights~$\gammai{c}$. In~\S\ref{sec:gamma} we extend
the method of \S\ref{sec:gammaInfinite} to prove Theorem~\ref{thm:spectrum} for the weights $\gammac{a}{b}$ and~$\deltac{a'}{b'}$, and give further results
on the left- and right-eigenvectors of the transition matrix $P(\gammac{a}{b})$. We end this section
with the proof of Corollary~\ref{cor:subsets}. In~\S\ref{sec:stochastic} 
we prove 
Corollary~\ref{cor:binomialTransformIsInvolutiveWalk} and in \S\ref{sec:reversibility}
we prove Theorem~\ref{thm:reversibleImpliesGamma}. 
In \S\ref{sec:Hilbert} we 
give a general setting using Hilbert spaces for  involutive walks on the interval $[0,1]$
and in \S\ref{sec:intervalWalk} we prove analogues of the main
results for the $\gammac{a}{b}$-weighted discrete walks, ending by comparing the spectral
behaviour in the discrete and continuous cases.

\section{The global anti-diagonal eigenvalue property}\label{sec:anti-diagonal}

In this section we prove Theorem~\ref{thm:GB} characterizing the binomial transforms 
$H^\lambda$ in~\eqref{eq:HL} in terms of Definition~\ref{defn:GB}.
We also give a self-contained proof of the connection between the binomial transform of a matrix
and the global anti-diagonal eigenvalue property first proved in \cite{OchiaiEtAl}
and prove some simpler results sufficient for the applications to probability below.

\subsection{Preliminaries}
Recall that $B(n)$ is the $n \times n$ Pascal's Triangle matrix defined by $B(n)_{xy} = \binom{x}{y}$
and that $J(n)$ is the $n \times n$ matrix  defined in Iverson bracket notation by $J(n)_{xy} = [x = y^\star]$.
For $e \in \{0,1,\ldots, n\}$, let $V_e$ be the $e$-dimensional subspace of $\mathbb{R}^n$ of all 
row vectors $\bigl( f(0), \ldots, f(n-1) \bigr)$ where~$f$ is a polynomial
of degree strictly less than $e$.
Let 
\begin{equation}\label{eq:v} v(d) = \Bigl( \binom{0}{d}, \binom{1}{d}, \ldots, \binom{n-1}{d} \Bigr)^t. \end{equation}
Thus $B(n)$ has columns $v(0), \ldots, v(n-1)$.

\begin{lemma}{\ }\label{lemma:binomialNew}
\begin{thmlist}
\item $B(n)^{-1}_{zw} = (-1)^{z+w} \binom{z}{w}$;
\item $V_e = \langle v(0), \ldots, v(e-1) \rangle$ for each $e \in \{0,\ldots, n\}$;
\item $J(n)v(d) \in (-1)^d v(d) + \langle v(0), \ldots, v(d-1) \rangle$ for each $d \in \ne$;
\item $\bigl( B(n)^{-1}J(n)B(n) \bigr)_{xy} = (-1)^x \binom{x^\star}{y^\star}$.
\end{thmlist}
\end{lemma}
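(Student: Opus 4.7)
The plan is to prove the four parts in order, since (i) and (ii) feed into the arguments for (iii) and (iv).

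Part~(i) is the classical binomial inversion identity. I would verify that $\sum_w \binom{x}{w}(-1)^{w+z}\binom{w}{z} = [x=z]$ using the subset-of-a-subset identity $\binom{x}{w}\binom{w}{z} = \binom{x}{z}\binom{x-z}{w-z}$, pulling $\binom{x}{z}$ outside, and evaluating $\sum_w (-1)^{w-z}\binom{x-z}{w-z} = (1-1)^{x-z} = [x=z]$. Since the candidate inverse matrix is lower triangular, this one-sided identity suffices.

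For~(ii), $v(d)$ is the vector of values of the polynomial $\binom{x}{d} \in \R[x]$ at $x = 0,1,\ldots,n-1$. Since $\binom{x}{d}$ has degree exactly $d$, the polynomials $\binom{x}{0},\ldots,\binom{x}{e-1}$ form a basis for the $e$-dimensional space of polynomials of degree strictly less than $e$. Evaluation at $n \ge e$ distinct points is injective, so $v(0),\ldots,v(e-1)$ are linearly independent and span $V_e$.

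For~(iii), the $x$-th coordinate of $J(n)v(d)$ is $v(d)_{x^\star} = \binom{n-1-x}{d}$, a polynomial in $x$ of degree $d$ with leading coefficient $(-1)^d/d!$. This matches the leading coefficient of $(-1)^d\binom{x}{d}$, so $J(n)v(d) - (-1)^d v(d)$ is the evaluation of a polynomial of degree less than $d$, which by~(ii) lies in $\langle v(0),\ldots,v(d-1)\rangle$.

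For~(iv), I would expand the product using~(i) to obtain
\[ \bigl(B(n)^{-1}J(n)B(n)\bigr)_{xy} = \sum_z (-1)^{x+z}\binom{x}{z}\binom{n-1-z}{y}. \]
The key step is the identity
\[ \sum_z (-1)^z \binom{x}{z}\binom{n-1-z}{y} = \binom{n-1-x}{y-x}, \]
which I would prove by recognising the LHS as $(-1)^x$ times the $x$-th forward difference of $g(z) = \binom{n-1-z}{y}$ evaluated at $z=0$. Pascal's rule gives $\Delta g(z) = -\binom{n-2-z}{y-1}$, and iterating yields $\Delta^x g(0) = (-1)^x \binom{n-1-x}{y-x}$. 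The symmetry $\binom{n-1-x}{y-x} = \binom{n-1-x}{n-1-y} = \binom{x^\star}{y^\star}$ then delivers the claimed formula.

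The only mild obstacle is spotting the finite-difference rewriting in~(iv); everything else is routine Pascal-triangle manipulation. As a consistency check, (iv) vanishes for $x > y$ (because then $n-1-y > n-1-x \ge 0$ forces $\binom{n-1-x}{n-1-y}=0$) and equals $(-1)^d$ on the diagonal $x=y=d$, in agreement with the upper-triangular, $(-1)^d$-diagonal structure predicted by~(iii).
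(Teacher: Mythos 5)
Your proposal is correct. Parts (i)--(iii) follow the paper's proof essentially verbatim: binomial inversion via the subset-of-a-subset identity and the alternating-sum collapse $(1-1)^{x-z}=[x=z]$, then the degree/leading-coefficient argument for (ii) and (iii). The only divergence is in (iv): after expanding $\bigl(B(n)^{-1}J(n)B(n)\bigr)_{xy}=\sum_z(-1)^{x+z}\binom{x}{z}\binom{n-1-z}{y}$ exactly as the paper does, you evaluate the sum as $(-1)^x$ times the $x$-th forward difference of $z\mapsto\binom{n-1-z}{y}$, using Pascal's rule to get $\Delta^x g(0)=(-1)^x\binom{n-1-x}{y-x}$, whereas the paper negates the upper index via $\binom{a}{b}=(-1)^{a-b}\binom{-b-1}{a-b}$ and applies Vandermonde's convolution. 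Both routes are valid and of comparable length; your finite-difference computation is arguably more transparent (it makes the upper-triangularity and the $(-1)^d$ diagonal visible immediately, as your consistency check notes), while the paper's version stays entirely within the stock of binomial identities it reuses elsewhere. One small care point in your version: the iterated Pascal step $\Delta\binom{n-1-j-z}{y-j}=-\binom{n-2-j-z}{y-j-1}$ is a polynomial identity in the upper argument, so it remains valid even when $n-1-j-z$ drops below zero in the course of the iteration; it is worth saying this explicitly since the paper's binomial conventions allow arbitrary real upper index.
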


\begin{proof}
For (i) we calculate 
\begin{align*}
\sum_{w=0}^{n-1}   \binom{x}{w}(-1)^{x+w}B(n)_{wy} &= (-1)^x \sum_{w=0}^{n-1} (-1)^{w} \binom{x}{w} \binom{w}{y} \\ &= (-1)^x \binom{x}{y} \sum_{w=0}^{n-1} (-1)^w \binom{x-y}{w-y} \\ &= [x=y]
\end{align*}
where in the final step we use the corollary of the Binomial Theorem that 
$\sum_{z=0}^m (-1)^z \binom{m}{z} = [m=0]$. 
Since the binomial coefficient $\binom{X}{d}$ is a polynomial with leading term $X^d/ d!$, we have (ii). For (iii) we note that
$J(n)v(d)$ has as its entries $\binom{n-1-X}{d}$ evaluated at $0$, \ldots, $n-1$. Since this
binomial coefficient is a polynomial of degree $d$ with leading term $(-1)^d X^d/d!$,  (iii) follows from~(ii).
Finally for (iv) we apply (i), the identity $\binom{a}{b} = \binom{-b-1}{a-b}(-1)^{a-b}$, Vandermonde's
convolution, and finally $\binom{b-a-1}{b}(-1)^b$ to get
\begin{align*}
\bigl( B(n)^{-1}J(n)B(n) \bigr)_{xy} &= 
\sum_z (-1)^{x+z} \binom{x}{z} \binom{n-1-z}{y} \\
&= \sum_z (-1)^{n-1+x+y} \binom{x}{z} \binom{-y-1}{n-1-z-y} \\
&= (-1)^{n-1+x+y} \binom{x-y-1}{n-1-y} \\
&= (-1)^x \binom{n-1-x}{n-1-y} 
\end{align*}

\vspace*{-3pt}
\noindent as required.
\end{proof}

\enlargethispage{6pt}
\subsection{A sufficient condition for the global anti-diagonal eigenvalue property}

We use the following lemma and proposition in~\S\ref{sec:gammaInfinite} and~\S\ref{sec:gamma} to show that the down-step matrices of the involutive walks for the binomial weights $\gammac{a}{b}$, $\gammai{c}$ and $\deltac{a'}{b'}$ are binomial transforms, and so have the global anti-diagonal eigenvalue property.

\begin{lemma}\label{lemma:practical}
Let $\HM$ be a real lower-triangular matrix with diagonal entries $\lambda_0, \ldots, \lambda_{n-1}$ read from top to bottom.
The following are equivalent
\begin{thmlist}
\item $\HM$ is equal to the binomial transform $H^\lambda$;
\item $\HM = B(n)\mathrm{Diag}(\lambda_0,\ldots,\lambda_{n-1})B(n)^{-1}$;
\item $\HM v(d) = \lambda_d v(d)$ for each $d \in \ne$.
\item $\HM_{xy} = \binom{x}{y}\sum_{e=0}^{x-y} (-1)^e \binom{x-y}{e} \lambda_{y+e}$ for $0 \le y \le x < n$.
\end{thmlist}
\end{lemma}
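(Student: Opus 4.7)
The four conditions are linked by very short computations, so the plan is essentially organisational: I will show (i) $\Leftrightarrow$ (ii) by definition, (ii) $\Leftrightarrow$ (iii) by a change of basis, and (ii) $\Leftrightarrow$ (iv) by direct matrix multiplication using Lemma~\ref{lemma:binomialNew}(i).

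First, (i) $\Leftrightarrow$ (ii) is immediate from the defining equation~\eqref{eq:HL} for the binomial transform $H^\lambda$, so nothing needs to be proved here. Next, for (ii) $\Leftrightarrow$ (iii), I would recall that by construction the columns of $B(n)$ are the vectors $v(0),\ldots,v(n-1)$, which form a basis of $\R^n$ by Lemma~\ref{lemma:binomialNew}(ii). Hence the matrix identity $L = B(n)\mathrm{Diag}(\lambda_0,\ldots,\lambda_{n-1})B(n)^{-1}$ is equivalent to $L B(n) = B(n)\mathrm{Diag}(\lambda_0,\ldots,\lambda_{n-1})$, which reading column by column says exactly $L v(d) = \lambda_d v(d)$ for each $d \in \ne$.

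For (ii) $\Rightarrow$ (iv), I would expand the product directly using Lemma~\ref{lemma:binomialNew}(i):
\[ L_{xy} = \sum_{z=0}^{n-1} B(n)_{xz}\,\lambda_z\, B(n)^{-1}_{zy} = \sum_{z=0}^{n-1} \binom{x}{z} \lambda_z (-1)^{z+y}\binom{z}{y}. \]
Substituting $z = y+e$ and using the elementary identity $\binom{x}{y+e}\binom{y+e}{y} = \binom{x}{y}\binom{x-y}{e}$ (which follows from both sides counting the number of pairs $(A,B)$ with $B \subseteq A \subseteq \{1,\ldots,x\}$, $|A|=y+e$, $|B|=y$), I obtain the formula in (iv). The converse (iv) $\Rightarrow$ (ii) is the same computation read in reverse, since each step is an equality. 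As a sanity check, taking $x=y$ in (iv) gives $L_{xx}=\lambda_x$, consistent with the assumption on the diagonal entries, and the formula vanishes (when naturally extended) for $x<y$, consistent with lower-triangularity.

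There is no real obstacle here: everything reduces either to the definition of $H^\lambda$, to the change-of-basis interpretation of diagonalisation, or to an application of Lemma~\ref{lemma:binomialNew}(i) together with a standard binomial identity. The only point requiring mild care is matching the range of summation in (iv) with the range of $z$ in the matrix product, which is handled by observing $\binom{x}{z}=0$ for $z>x$ and $\binom{z}{y}=0$ for $z<y$, so only $y \le z \le x$ (equivalently $0 \le e \le x-y$) contribute.
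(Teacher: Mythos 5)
Your proposal is correct and follows essentially the same route as the paper: (i)$\Leftrightarrow$(ii) by definition, (ii)$\Leftrightarrow$(iii) by reading the matrix identity column by column against the columns $v(0),\ldots,v(n-1)$ of $B(n)$, and (ii)$\Leftrightarrow$(iv) by expanding the product with Lemma~\ref{lemma:binomialNew}(i) and the identity $\binom{x}{w}\binom{w}{y}=\binom{x}{y}\binom{x-y}{w-y}$. No gaps.
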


\begin{proof}
Conditions (i) and (ii) are immediately equivalent by the definition of $H^\lambda$. Since $B(n)$ has columns $v(0), v(1), \ldots, v(n-1)$, (iii) holds if and only if $\HM B(n) = B(n)\mathrm{Diag}(\lambda_0,\ldots,\lambda_{n-1})$, hence if and only if (ii) holds.
Finally, by Lemma~\ref{lemma:binomialNew}(i), 
the entry $H^\lambda_{xy}$ is 
\begin{align*} \sum_{w=0}^{n-1} \binom{x}{w} \lambda_w (-1)^{w+y} \binom{w}{y} &=
\sum_{w=y}^{n-1} \binom{x}{y} (-1)^{w+y} \binom{x-y}{w-y} \lambda_w \\
&= \binom{x}{y} \sum_{e=0}^{x-y} (-1)^e \binom{x-y}{e} \lambda_{y+e}
\end{align*}
as required for (iv).
\end{proof}

\begin{proposition}\label{prop:practical}
Suppose that any of the equivalent conditions in Lemma~\ref{lemma:practical} holds for the $n \times n$ lower-triangular 
matrix $\HM$. Then
\begin{thmlist}
\item $\HM J(n)v(d) \in (-1)^{d}\lambda_d v(d) + V_{d-1}$;
\item $\HM J(n)$ preserves each subspace $V_d$ and acts as  $(-1)^d \lambda_d$ on $V_{d+1}/V_d$;
\item The matrix representing $\HM J(n)$ in the basis $v(0), v(1), \ldots, v(n-1)$ is upper-triangular with diagonal entries
$\lambda_0, -\lambda_1, \ldots, (-1)^{n-1}\lambda_{n-1}$ read from top to bottom.
\item $\HM$ has the global anti-diagonal eigenvalue property.
\end{thmlist}
\end{proposition}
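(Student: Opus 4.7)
The proof will proceed by first establishing (i) directly, then observing that (ii) and (iii) are essentially reformulations, and finally deducing (iv) by restricting to top-left submatrices.

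\textbf{Step 1: Prove (i).} The idea is to unfold $LJ(n)v(d)$ using the two ingredients already available. By Lemma~\ref{lemma:binomialNew}(iii), we have $J(n)v(d) = (-1)^d v(d) + w$ for some $w \in \langle v(0),\ldots,v(d-1)\rangle$. By the equivalent condition (iii) of Lemma~\ref{lemma:practical}, $L$ acts as $\lambda_j$ on each $v(j)$. Applying $L$ to the expression for $J(n)v(d)$ thus yields $LJ(n)v(d) = (-1)^d \lambda_d v(d) + w'$ for some $w' \in \langle v(0),\ldots,v(d-1)\rangle$, which is (i).

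\textbf{Step 2: Deduce (ii) and (iii).} By Lemma~\ref{lemma:binomialNew}(ii), $V_e = \langle v(0),\ldots,v(e-1)\rangle$, so (i) immediately implies that $LJ(n)$ maps $V_{d+1}$ into itself and acts by $(-1)^d\lambda_d$ on the one-dimensional quotient $V_{d+1}/V_d$; this is (ii). Writing $LJ(n)$ in the ordered basis $v(0),\ldots,v(n-1)$, the statement of (i) says precisely that the $d$th column has entry $(-1)^d\lambda_d$ on the diagonal and zeros below, which gives (iii).

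\textbf{Step 3: Deduce (iv).} The global anti-diagonal eigenvalue property requires that for every $m\le n$, the top-left $m\times m$ submatrix $L_{(m)}$ of $L$ satisfies: $L_{(m)}$ is diagonalizable, and the eigenvalues of $L_{(m)}J(m)$ are $(-1)^d(L_{(m)})_{dd}$ for $d\in\{0,\ldots,m-1\}$. The crucial observation is that because $B(n)$ and $B(n)^{-1}$ are lower triangular with entries $\binom{x}{y}$ and $(-1)^{x+y}\binom{x}{y}$ (Lemma~\ref{lemma:binomialNew}(i)) that depend only on $x,y$ and not on $n$, the top-left $m\times m$ submatrix of $L = B(n)\mathrm{Diag}(\lambda_0,\ldots,\lambda_{n-1})B(n)^{-1}$ is exactly $B(m)\mathrm{Diag}(\lambda_0,\ldots,\lambda_{m-1})B(m)^{-1}$; this follows because a product of lower-triangular matrices has top-left block equal to the product of their top-left blocks. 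Hence $L_{(m)}$ again satisfies condition (ii) of Lemma~\ref{lemma:practical} (now for the smaller ambient dimension $m$), it is manifestly diagonalizable as a conjugate of a diagonal matrix, and applying (iii) of the present proposition in dimension $m$ shows that $L_{(m)}J(m)$ is upper-triangular in the basis $v(0),\ldots,v(m-1)$ of $\R^m$ with diagonal $(-1)^d\lambda_d$. These are therefore its eigenvalues. Since $(L_{(m)})_{dd} = \lambda_d$ by the diagonal of $L = H^\lambda$, this matches $(-1)^d(L_{(m)})_{dd}$, establishing (iv).

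The only step requiring any genuine thought is Step 3, where one must confirm that the binomial-transform structure is inherited by top-left submatrices; the remainder is pure bookkeeping built on Lemma~\ref{lemma:binomialNew}.
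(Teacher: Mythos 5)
Your proof is correct and follows essentially the same route as the paper: part (i) comes from combining Lemma~\ref{lemma:binomialNew}(iii) with the eigenvector condition in Lemma~\ref{lemma:practical}(iii), and (ii), (iii) are the same reformulations. The only (cosmetic) difference is in (iv), where the paper passes to top-left submatrices by applying the canonical projection $\mathbb{R}^n \rightarrow \mathbb{R}^m$ to the flag of subspaces $V_d$, while you instead observe that the top-left $m \times m$ block of $B(n)\mathrm{Diag}(\lambda_0,\ldots,\lambda_{n-1})B(n)^{-1}$ is $B(m)\mathrm{Diag}(\lambda_0,\ldots,\lambda_{m-1})B(m)^{-1}$; these are equivalent, and you also make the required diagonalizability explicit, which the paper leaves implicit.
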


\begin{proof}
By Lemma~\ref{lemma:practical}(iii), $\HM v(d) = \lambda_d v(d)$. By Lemma~\ref{lemma:binomialNew}(ii) and (iii), 
$v(d) \in V_{d+1}$ and $J(n)v(d) = (-1)^d v(d) + w$ where $w \in V_{d}$. Hence $\HM Jv(d) \in (-1)^d \lambda_d v(d) + V_{d}$,
as required for (i). Since $v(d) + V_{d}$ spans the quotient space $V_{d+1}/V_{d}$,~(ii) follows at once;
(iii) simply restates (ii) in matrix language. From~(iii) we see that $\HM J(n)$ has eigenvalues $\lambda_0$, $-\lambda_1$,
\ldots, $(-1)^{n-1}\lambda_{n-1}$ read from top-to-bottom. Hence $\HM$ has the anti-diagonal eigenvalue property.
By applying the canonical projection $\mathbb{R}^n \rightarrow \mathbb{R}^m$ to each subspace $V_d$, 
we see that all these results hold
for each $m \times m$ top-left submatrix of $\HM$.
Hence $\HM$ has the global anti-diagonal eigenvalue property.
\end{proof}

\subsection{A generic characterisation of the global anti-diagonal eigenvalue property}
We now give a self-contained proof of Theorem~2.17 in \cite{OchiaiEtAl}. The matrix
$C$ in our proof plays a similar role to the matrix in Lemma~2.13 in \cite{OchiaiEtAl}, but is more easily visualised.
Stated so as to make clear that the exceptional set is an algebraic variety, Theorem~2.17 is as follows.

\begin{theorem}[Ochiai, Sasada, Shirai and Tsuboi]\label{thm:OchiaiEtAl}
There is a homogeneous polynomial $f(x_0,\ldots,x_{n-2})$ of degree $(n-1)(n-2)/2$ such that,
provided $f(\lambda_0,\ldots,\lambda_{n-2}) \not=0$, a lower-triangular matrix $L$ 
with eigenvalues $\lambda_0, \ldots, \lambda_{n-2}, \lambda_{n-1}$ read from top to bottom
has the global anti-diagonal eigenvalue property
if and only if $L = H^\lambda$.
\end{theorem}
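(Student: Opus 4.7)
The plan is to proceed by induction on $n$, noting first that the ``if'' direction of the characterisation---that $L = H^\lambda$ has the global anti-diagonal eigenvalue property for every $\lambda$---follows from Proposition~\ref{prop:practical}(iv) without any genericity hypothesis. Only the converse requires the exceptional set. The base case $n \le 2$ is immediate: for $n = 2$, matching the trace of $LJ(2)$ forces $L_{1,0} = \lambda_0 - \lambda_1$ unconditionally, so $f$ may be taken to be a nonzero constant in degree $(2-1)(2-2)/2 = 0$.

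For the inductive step, I assume the theorem in size $n-1$ with homogeneous polynomial $f_{n-1}$ of degree $(n-2)(n-3)/2$. Since the top-left $(n-1)\times(n-1)$ submatrix $L_{[n-1]}$ inherits the global anti-diagonal eigenvalue property, the inductive hypothesis gives $L_{[n-1]} = H^{\lambda'}$ with $\lambda' = (\lambda_0, \ldots, \lambda_{n-2})$, provided $f_{n-1}(\lambda') \neq 0$. The only remaining freedom is the $n-1$ entries $r_0, \ldots, r_{n-2}$ of the last row of $L$. The key structural observation is that, since $(LJ(n))_{xz} = L_{x, z^\star}$, the last row of $LJ(n)$ equals $(\lambda_{n-1}, r_{n-2}, \ldots, r_0)$, so expanding $\det(XI - LJ(n))$ along this row shows that $\chi_{LJ(n)}(X)$ is \emph{linear} in the unknowns $r_y$, with all cofactors depending only on the already-determined block $L_{[n-1]}$ and on $\lambda_{n-1}$. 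Subtracting the identity for $L = H^\lambda$, the eigenvalue condition becomes the polynomial identity $\sum_{y=0}^{n-2}(r_y - H^\lambda_{n-1,y})\mu_y(X) \equiv 0$ in $\mathbb{R}[X]$, where each $\mu_y(X)$ depends only on $\lambda'$. This is an overdetermined linear system in the $r_y$, always consistent by Proposition~\ref{prop:practical}, with unique solution $r_y = H^\lambda_{n-1,y}$ precisely when $\mu_0, \ldots, \mu_{n-2}$ are $\mathbb{R}$-linearly independent.

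Linear independence is equivalent to the nonvanishing of a suitable $(n-1)\times(n-1)$ minor of the $n \times (n-1)$ matrix of $X$-coefficients of the $\mu_y$, and this minor will be the candidate polynomial $f$. The main obstacle is the degree count. By Lemma~\ref{lemma:practical}(iv), each entry of $H^{\lambda'}$ is linear in $\lambda'$, so each entry of $XI - LJ(n)$ is linear in $(X, \lambda')$ and each $\mu_y$ is bi-homogeneous of total degree $n-1$ in $(X, \lambda')$; furthermore $\mu_0$ alone contributes a leading $X^{n-1}$ term of constant coefficient $-1$, while for $y \ge 1$ the maximum $X$-degree of $\mu_y$ is $n-2$. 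Selecting the minor obtained by deleting the row for the $X^0$ coefficient and expanding along the almost-trivial row for the $X^{n-1}$ coefficients (whose only nonzero entry is $-1$ in column $0$) reduces the calculation to an $(n-2)\times(n-2)$ determinant whose row-$k$ entries are homogeneous of degree $n-1-k$ in $\lambda'$; the total degree is then $\sum_{k=1}^{n-2}(n-1-k) = (n-1)(n-2)/2$, as required. The final subtle point I would need to verify is that $f_{n-1}$ divides the resulting $f_n$, so that a single polynomial simultaneously captures both the inductive hypothesis and the uniqueness of the new row---a consistency condition that I expect to emerge naturally from the Pascal-triangle structure of the minors, but which is where the induction ties together most delicately.
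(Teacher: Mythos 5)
Your setup coincides with the paper's own proof: induction on $n$, reduction to the unknown bottom row, linearity of $\det(XI - LJ(n))$ in those unknowns, and the resulting $(n-2)\times(n-2)$ coefficient matrix --- the minor you obtain after deleting the $X^0$ row and expanding along the $X^{n-1}$ row is, up to sign, exactly the paper's matrix $C$ with $C_{dy}$ the coefficient of $\epsilon_y X^d$. The genuine gap is that you never show this determinant is not the \emph{zero} polynomial in $\lambda_0,\ldots,\lambda_{n-2}$. Your degree computation only establishes that \emph{if} the determinant is nonzero then it is homogeneous of degree $(n-1)(n-2)/2$: a determinant whose row-$d$ entries are homogeneous of degree $n-1-d$ is either identically zero or homogeneous of that total degree, and nothing in your argument excludes the first alternative. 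If $f\equiv 0$ the hypothesis ``$f(\lambda)\ne 0$'' is never satisfied and the theorem is vacuous, so this non-vanishing is the entire content of the existence claim. The paper explicitly calls this step ``surprisingly non-trivial'' and devotes the bulk of the proof to it: it specializes $\lambda_x = x$, permutes the rows and columns of the relevant minor of $N$ into a matrix $N'$ that is upper-triangular and tridiagonal apart from its bottom row, and deduces that the coefficient polynomials of $\epsilon_{n-2}, \epsilon_1, \epsilon_{n-3}, \epsilon_2, \ldots$ have strictly decreasing $X$-degrees $n-3, n-4, \ldots, 0$ with nonzero leading coefficients, whence $C$ becomes triangular with nonzero diagonal after a column permutation. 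Some such explicit witness computation is indispensable, and it is absent from your proposal.

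A secondary point: you rightly flag that the induction needs the genericity hypothesis to propagate, i.e.\ that the vanishing locus of $f_{n-1}$ be contained in that of $f_n$ (for instance via $f_{n-1}\mid f_n$, or by replacing $f_n$ with a product of the minors over all levels at the cost of the stated degree). The paper's proof is equally silent on this bookkeeping, so it is not a defect relative to the paper, but since you leave it explicitly unresolved it remains a loose end. The decisive missing piece, however, is the proof that the chosen minor is a nonzero polynomial.
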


\begin{proof}
The `if' direction holds by Proposition~\ref{prop:practical}(iv).
Suppose that $L$ is a real lower-triangular $n \times n$ matrix with eigenvalues $\lambda_0$, $\ldots$, $\lambda_{n-1}$
having the global anti-diagonal eigenvalue property. 
Working by induction on $n$ we may assume that the matrix obtained from $L$ by deleting its bottom row and rightmost
column is
$H^{(\lambda_0,\ldots,\lambda_{n-2})}$.
Since $H^{(\lambda_0,\ldots,\lambda_{n-2})}$ is obtained by the same deletions from
$H^{(\lambda_0,\ldots,\lambda_{n-2},\lambda_{n-1})}$, we may write
$L = H^\lambda + E$ where the $n \times n$ matrix $E$ is zero except in its bottom row.
Let the bottom row of $E$ be $\epsilon_0, \ldots, \epsilon_{n-1}$ read left to right. 
It suffices to show that $\epsilon_z = 0$ for each~$z$.

By the assumption that $L$ has eigenvalues $\lambda_0, \ldots, \lambda_{n-1}$ we have 
$\tr L = \lambda_0 + \lambda_1 + \cdots + \lambda_{n-1}$. 
Since $\tr L = \tr (H^\lambda + E) = \lambda_0 + \lambda_1 + 
\cdots + \lambda_{n-1} + \epsilon_{n-1}$, it follows that $\epsilon_{n-1} = 0$.
Similarly, since  the matrices $LJ(n)$ and $H^\lambda J(n)$
have eigenvalues $\lambda_0, -\lambda_1, \ldots, (-1)^{n-1}\lambda_{n-1}$, we have $\tr LJ(n) = \tr H^\lambda J(n)$; 
now since $LJ(n) = H^\lambda J(n) + EJ(n)$
and $EJ(n)$ has bottom row entries $\epsilon_{n-1}, \ldots, \epsilon_0$, 
it follows that $\epsilon_0 = 0$.

Let $M = LJ(n) - XI$, where $X$ is an indeterminate.
Since $L$ has the anti-diagonal eigenvalue property,
$\det(LJ-XI) = \det(H^\lambda J(n) - XI)$.
By the multilinearity of the determinant, 
\[ \det(LJ(n)-XI) - \det(H^\lambda J(n) - XI) = \det N \]
where~$N$ is the matrix obtained from $LJ(n) - XI$
by subtracting the bottom row of $H^\lambda J(n)$; this leaves the bottom row of $E$.
For instance if $n=5$ then~$N$ is as shown below.
\[ \scalebox{0.9}{$\left( \begin{matrix}
-X  & \cdot &\cdot &\cdot &  \lambda_0 \\
\cdot & -X & \cdot & \lambda_1 &  \lambda_0-\lambda_1  \\
\cdot & \cdot & \lambda_2-X & 2(\lambda_1-\lambda_2) &  \lambda_0-2\lambda_1 + \lambda_2  \\
\cdot & \lambda_3 & 3(\lambda_2-\lambda_3) & 3(\lambda_0-2\lambda_1 + \lambda_2) - X &  
\lambda_0-3\lambda_1 + 3\lambda_2 - \lambda_3  \\
 0 & \epsilon_3 & \epsilon_2 & \epsilon_1 & 0 \end{matrix} \right)$}  \]
(The entries in $N$ can be computed using Lemma~\ref{lemma:practical}(iv), or more conveniently
for hand calculations, by Lemma~\ref{lemma:binomialTransformRecurrence} below.)

For each $d \in \{0,1,\ldots, n-1\}$, the 
coefficient of $X^d$ in $\det N$ is a linear combination $f_1 \epsilon_1 + \cdots + f_{n-2} \epsilon_{n-2}$
where each $f_y$ is a homogeneous polynomial in $\lambda_0, \ldots, \lambda_{n-2}$ of degree $n-1-d$.
Since the coefficients of $X^0$ and $X^{n-1}$ are zero, we need only consider $d \in \{1,\ldots, n-2\}$.
Let $C$ be the $(n-2) \times (n-2)$ matrix \emph{with rows and columns labelled by $\{1,\ldots, n-2\}$}
 such that $C_{dy}$ is the coefficient of $\epsilon_y X^{d}$ in $\det N$.
There is a non-zero solution $(\epsilon_1, \ldots, \epsilon_{n-2})$ to $\det N = 0$
if and only if $\det C = 0$. We can therefore take the required multivariable
polynomial $f$ to be $\det C$, \emph{provided}
$\det C$ is not the zero polynomial in the eigenvalues $\lambda_0, \ldots, \lambda_{n-1}$.

This is surprisingly non-trivial to prove. 
It suffices to show that $\det C$ is non-zero for one specialization
of the eigenvalues. We take \hbox{$\lambda_x = x$} for each~$x$. (The choice for $\lambda_{n-1}$
is irrelevant.) After this specialization, the matrices $N$ and $C$ are shown left and right below when $n=7$. 
\[ \scalebox{0.8}{$\left(
\begin{matrix}
 -X & \cdot & \cdot& \cdot & \cdot & \cdot & 0 \\
 \cdot & -X & \cdot & \cdot & \cdot & 1 & -1 \\
 \cdot & \cdot & -X & \cdot & 2 & -2 & 0 \\
 \cdot & \cdot & \cdot & 3-X & -3 & 0 & 0 \\
 \cdot & \cdot & 4 & -4 & -X & 0 & 0 \\
 \cdot & 5 & -5 & 0 & 0 & -X & 0 \\
 0 & \epsilon_5 & \epsilon_4 & \epsilon_3 & \epsilon_2 & \epsilon_1 & -X 
\end{matrix}
\right)\quad
\raisebox{-12pt}{$\begin{blockarray}{cccccc}
\begin{block}{c(ccccc)}
X & -120 & -120 & -120 & -120 & -120 \\
X^2 & 100 & 40 & 0 & -30 & -54 \\
X^3 & 15 & 0 & 0 & 10 & 30 \\
X^4 & -5 & 0 & 0 & 0 & 3 \\
X^5 & 0 & 0 & 0 & 0 & -1 \\[-3pt]
\end{block}
& \epsilon_1 & \epsilon_2 & \epsilon_3 & \epsilon_4 & \epsilon_5
\end{blockarray}$}$}
\]
Observe that, after a column permutation $C$ is upper-triangular, with non-zero diagonal entries,
and so has non-zero determinant.

In general, 
to obtain a non-zero contribution to the coefficient
of $\epsilon_y$ by expanding $\det N$ we must take $\epsilon_y$ from the bottom row,
$-X$ from the top row, and $-1$ from row~$1$. Thus $C_{dy}$ is the coefficient
of $\epsilon_ y X^{d-1}$ in the minor obtained by deleting
the first two rows and the first and last columns from~$N$. We may rearrange the rows and
the rows and columns of this minor matrix so that (using the original numbering of $\{0,\ldots, n-1\}$ for both),
the rows come in order $n-2, 2, n-3, 3, \ldots, \lfloor \mfrac{n}{2} \rfloor, n-1$
and the columns in order $1, n-2, 2, n-3, \ldots, \lfloor \mfrac{n}{2} \rfloor$. This gives
an $(n-2) \times (n-2)$ matrix $N'$ that is upper-triangular and tridiagonal, except in its bottom row which has entries
$\epsilon_{n-2}, \epsilon_1, \epsilon_{n-3}, \epsilon_2, \ldots, \epsilon_{\lfloor \sfrac{n-1}{2} \rfloor}$.
(To see this, note that the columns with highest entry in the same row are now adjacent, as are the rows with
left-most entry in the same column.)
By this choice of order, the entries of~$N$ involving $X$ appear on the super-diagonal of $N'$. More precisely,
labelling rows and columns from $0$ as usual, if $x < n-4$ is even then
\begin{align*} N'_{xy} &= \begin{cases} n-2 -\mfrac{x}{2} & \text{if $y=x$} \\
-X & \text{if $y=x+1$} \\
-(n-2 - \mfrac{x}{2}) & \text{if $y=x+2$} \end{cases} 
\intertext{and if $x < n-4$ is odd then}
N'_{xy} &= \begin{cases} -(1 + \mfrac{x+1}{2}) & \text{if $y = x$} \\
-X & \text{if $y=x+1$} \\
1+\mfrac{x+1}{2} & \text{if $y = x+2$} \end{cases}
\end{align*}
and all other entries of $N'$ in these rows are $0$. 
Row $n-4$ of $N'$, just above the bottom row, ends with the two non-zero entries
$\pm \lfloor \mfrac{n}{2} \rfloor$, $-X \mp \lfloor \mfrac{n}{2} \rfloor$, the signs depending on the parity of $n$.
For example, if $n=7$ then $N'$ is as shown below.
\[ \scalebox{0.9}{$\left( \begin{matrix}
 5 & -X & -5 & 0 & 0 \\
 0 & -2 & -X & 2 & 0 \\
 0 & 0 & 4 & -X & -4 \\
 0 & 0 & 0 & -3 & 3-X \\
 \epsilon_5 & \epsilon_1 & \epsilon_4 & \epsilon_2 & \epsilon_3 
\end{matrix} \right)$} \]

The minor matrix obtained by deleting the bottom row and final column from $N'$ is upper-triangular; its determinant is the coefficient of $\epsilon_{\lfloor \sfrac{n-1}{2} \rfloor}$ and has degree $0$ in $X$.
Generally, if $\epsilon_y$ comes $r$-th from the end in the order
$\epsilon_{n-2}$, $\epsilon_1$, $\epsilon_{n-3}$, $\epsilon_2$, \ldots, $\epsilon_{\lfloor \sfrac{n-1}{2} \rfloor}$, 
then the corresponding minor matrix of $N'$
has an $(r-1) \times (r-1)$ bottom right matrix with determinant of degree~\hbox{$r-1$} in $X$ and 
the minor is the product of this by a non-zero constant. Therefore
the polynomial coefficients of 
$\epsilon_{n-2}$, $\epsilon_1$, $\epsilon_{n-3}$, $\epsilon_2$, \ldots, $\epsilon_{\lfloor \sfrac{n-1}{2} \rfloor}$
are of degrees $n-3$, $n-4$, \ldots, $1$, $0$, respectively.
It follows that the matrix $C$ can be rearranged (by the reverse of this ordering of the $\epsilon_y$) so that it is
upper-triangular with non-zero diagonal entries. Hence $\det C \not= 0$.
\end{proof}

The following example extends those in~\cite{OchiaiEtAl} to show that the exceptional set in Theorem~\ref{thm:OchiaiEtAl} is necessary even if we
add the condition that the lower-triangular matrix has distinct eigenvalues, or is stochastic.

\begin{example}\label{ex:GADEPcx}
It is a routine calculation to show that the lower-triangular 
matrices $L$ and $H$ left and middle below both have the anti-diagonal eigenvalue property;
an explicit basis of right-eigenvectors of $HJ(5)$ is given in the far right matrix.
By Lemma~\ref{lemma:practical}(iv), the $3 \times 3$ top-left submatrix of $L$ is 
$H^{(1,\sfrac{2}{3}, \sfrac{1}{4})}$, and similarly, the $4 \times 4$ top-left submatrix
of $H$ is $H^{(1,\sfrac{1}{2},\sfrac{1}{2}-\tau,\sfrac{1}{2}-\tau)}$. Therefore $L$ and $H$ 
have the global anti-diagonal eigenvalue property.
\[ \hspace*{-48pt}\scalebox{0.8}{$
\left( \begin{matrix}
 1 & \cdot & \cdot & \cdot \\
 \frac{1}{3} & \frac{2}{3} & \cdot & \cdot \\[2pt]
 -\frac{1}{12} & \frac{5}{6} & \frac{1}{4} & \cdot \\[2pt]
 -\frac{9}{20} & \frac{11}{10} + \mfrac{4}{5}\tau & \frac{3}{20} + \mfrac{1}{5}\tau & \frac{1}{5} 
\end{matrix} \right) \quad
\left( \begin{matrix} 1 & \cdot & \cdot & \cdot & \cdot \\
\mfrac{1}{2} & \mfrac{1}{2} & \cdot & \cdot & \cdot \\
\mfrac{1}{2} - \tau & 2 \tau & \mfrac{1}{2}-\tau & \cdot & \cdot \\
\mfrac{1}{2}-  2\tau & 3\tau & 0 & \mfrac{1}{2}-\tau & \cdot \\
\mfrac{1}{2}-4\tau & 5\tau & 0 & \tau & \mfrac{1}{2}-2\tau \end{matrix} \right)\quad
\left(\begin{matrix} 
 1 & -2 & 0 & -4 & -4 \tau  (10 \tau -3) \\
 1 & -\frac{5}{4} & 0 & -4 & -28 \tau ^2+11 \tau -1 \\
 1 & -\frac{1}{2} & 1 & -2 \tau -1 & (4 \tau -1) \left(4 \tau ^2-6 \tau +1\right) \\
 1 & \frac{1}{4} & 0 & 2-4 \tau  & (4 \tau -1) \left(8 \tau ^2-5 \tau +1\right) \\
 1 & 1 & 0 & 2-4 \tau  & 2 \tau  (4 \tau -1) (10 \tau -3) 
 \end{matrix}\right)$} \]
Again by Lemma~\ref{lemma:practical}(iv), if $\tau \not=0$ then the bottom row of $L$ does not agree with
$H^{(1,\sfrac{2}{3}, \sfrac{1}{4},\sfrac{1}{5})}$, and similarly the bottom row of $H$ does not agree with
$H^{(1,\sfrac{1}{2},\sfrac{1}{2}-\tau,\sfrac{1}{2}-\tau,\sfrac{1}{2}-2\tau)}$. Therefore these matrices
have the global anti-diagonal eigenvalue property but are not of the form $H^\lambda$.
\end{example}

The authors know of no example satisfying both conditions simultaneously. An explicit calculation of the determinant
$\det C$ in the previous proof can be used to prove the conjecture below for $n \le 4$.

\begin{conjecture}\label{conj:GADEP}
A stochastic matrix with distinct
eigenvalues has the strong anti-diagonal eigenvalue property if and only if it is of the form $H^\lambda$.
\end{conjecture}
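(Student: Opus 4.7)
The plan is to build on the framework of Theorem~\ref{thm:OchiaiEtAl}. That theorem provides a homogeneous polynomial $f = \det C$ in $(\lambda_0, \ldots, \lambda_{n-2})$ whose non-vanishing forces any lower-triangular matrix with those diagonal entries having the global anti-diagonal eigenvalue property to equal $H^\lambda$. The base cases $n \le 8$ are already established by computer algebra, so it suffices to show that when $f(\lambda_0, \ldots, \lambda_{n-2}) = 0$, the assumptions of stochasticity and distinct eigenvalues together still force $L = H^\lambda$.

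Write $L = H^\lambda + E$, where $E$ is zero outside its bottom row, with entries $\epsilon_0, \ldots, \epsilon_{n-1}$. The two trace conditions used in the proof of Theorem~\ref{thm:OchiaiEtAl} give $\epsilon_0 = \epsilon_{n-1} = 0$, and matching the full characteristic polynomials of $LJ(n)$ and $H^\lambda J(n)$ yields the linear system $C(\epsilon_1,\ldots,\epsilon_{n-2})^t = 0$. On the exceptional variety $\det C = 0$ this kernel is positive-dimensional, so two additional constraints must be exploited. First, a direct calculation using Lemma~\ref{lemma:practical}(iv) and Vandermonde's identity shows that each row of $H^\lambda$ sums to $\lambda_0 = 1$, so stochasticity of $L$ imposes the hyperplane condition $\epsilon_1 + \cdots + \epsilon_{n-2} = 0$, and non-negativity imposes the pointwise cone condition $H^\lambda_{n-1,z} + \epsilon_z \ge 0$. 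Second, distinctness of the $\lambda_d$ (up to signs) means by Proposition~\ref{prop:practical} that $H^\lambda J(n)$ is diagonalizable with one eigenvector $u(d)$ per eigenvalue, each lying in $v(d) + V_d$, and any perturbation $EJ(n)$ preserving all eigenvalues must be compatible with this eigenbasis—an equation which, because $E$ only acts on the last coordinate, can be written down explicitly as further constraints on $\epsilon$.

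The hope is that combining the kernel equation $C\epsilon' = 0$ with the linear constraint $\sum \epsilon_z = 0$, the sign conditions, and the eigenbasis compatibility conditions forces $\epsilon' = 0$, ruling out the exceptional phenomena of Example~\ref{ex:GADEPcx} (where the eigenvalues are deliberately repeated) in the distinct-eigenvalue case. A promising alternative route is to bypass $\det C$ altogether: use the ideas of Corollary~\ref{cor:binomialTransformIsInvolutiveWalk} to show that a stochastic matrix with the global anti-diagonal eigenvalue property is reconstructible from partial data (say, its first column or its invariant distribution), and to verify that this reconstruction necessarily yields a binomial transform.

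The principal obstacle, and the reason this remains a conjecture rather than a theorem, is making the final step uniform in $n$. The polynomial $\det C$ has degree $(n-1)(n-2)/2$ and its zero locus has no apparent clean description, so a direct attack on the variety seems intractable. Meanwhile, the reconstruction strategy requires a canonical recipe producing $H^\lambda$ from its $1$-eigenvector, which is precisely the ingredient currently missing; Corollary~\ref{cor:binomialTransformIsInvolutiveWalk} gives necessary and sufficient conditions for such an $H^\lambda$ to be stochastic, but it starts by assuming the binomial-transform form. Closing the gap will likely require either a new invariant of stochastic anti-diagonal matrices that distinguishes $H^\lambda$ from the pathological perturbations, or a Markov-chain coupling argument that forces the distinct-spectrum condition to collapse the null space of $C$.
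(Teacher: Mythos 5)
The statement you are addressing is stated in the paper as Conjecture~\ref{conj:GADEP} and is explicitly offered there as an open problem: the authors give no proof, remarking only that an explicit computation of the determinant $\det C$ from the proof of Theorem~\ref{thm:OchiaiEtAl} verifies the conjecture for $n \le 4$. Your proposal is correctly calibrated in that it does not claim a proof; it sketches a strategy and candidly identifies where it breaks down. The two routes you suggest (cutting the kernel of $C$ with the stochasticity hyperplane $\epsilon_1+\cdots+\epsilon_{n-2}=0$ and the non-negativity cone, or reconstructing $H^\lambda$ from partial data in the spirit of Corollary~\ref{cor:binomialTransformIsInvolutiveWalk}) are both plausible starting points consistent with the paper's framework, and your identification of the obstruction --- that on the variety $\det C=0$ the solution space of $C\epsilon'=0$ is positive-dimensional and nothing currently forces the extra constraints to meet it only at the origin uniformly in $n$ --- is exactly the right diagnosis.

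Two corrections of detail. First, the computational verification for $n \le 8$ that you cite belongs to Conjecture~\ref{conj:reversible}, the reversibility conjecture; for Conjecture~\ref{conj:GADEP} the paper records verification only for $n \le 4$, so any induction would have to start lower. Second, in Example~\ref{ex:GADEPcx} only the second matrix $H$ has deliberately repeated eigenvalues; the first matrix $L$ there has the distinct eigenvalues $1, \frac{2}{3}, \frac{1}{4}, \frac{1}{5}$ and escapes the form $H^\lambda$ because it fails to be stochastic (it has negative entries). So neither hypothesis alone collapses the kernel of $C$, and a successful argument genuinely needs both together --- consistent with your plan, but worth keeping explicit. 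As matters stand this remains an open problem; there is no proof in the paper against which to measure your attempt, and your proposal does not close the gap it correctly identifies.
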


We offer this as an open problem, related to Conjecture~\ref{conj:reversible}.

\subsection{Anti-diagonal conjugators and a characterisation of the matrices $H^\lambda$}
Let $L$ be a diagonalizable lower-triangular matrix having diagonal entries $\lambda_0$, \ldots, $\lambda_{n-1}$.
There is a basis $\uw(0), \ldots, \uw(n-1)$ of eigenvectors of $L$ such that the 
matrix $Q$ with columns $\uw(0), \ldots, \uw(n-1)$ is lower-triangular. The matrix representing $LJ(n)$ in this basis
is 
\[ Q^{-1}LJ(n)Q = (Q^{-1}LQ)(Q^{-1}J(n)Q) = \mathrm{Diag}(\lambda_0, \ldots, \lambda_{n-1}) Q^{-1}J(n)Q. \]
Thus if $Q^{-1}J(n)Q$ is upper-triangular with alternating sign entries, then~$L$ has the anti-diagonal eigenvalue
property.
Motivated by this observation we make the following definition,
working over an arbitrary infinite field $\FF$.

\begin{definition}\label{defn:GB}
A lower-triangular $n \times n$ matrix $Q$ with entries in $\FF$ is an \emph{anti-diagonal conjugator} if 
$Q^{-1}J(n)Q$ is upper-triangular with diagonal entries $(Q^{-1}J(n)Q)_{xx} = (-1)^x$ and a \emph{global anti-diagonal
conjugator} if the same holds for all the top-left $m \times m$ submatrices of $Q$, replacing $J(n)$ with $J(m)$.
A lower-triangular matrix $L$ has \emph{global anti-triangular eigenbasis action} if it has the columns of a global
anti-diagonal conjugator as its eigenvectors.
\end{definition}

By Lemma~\ref{lemma:practical}(iv), $B(n)^{-1}J(n)B(n)$ is upper-triangular for each $n$.
Therefore $B(n)$ is a global anti-diagonal conjugator, and the binomial transform $H^\lambda = B(n)\mathrm{Diag}(\lambda_0,\ldots,
\lambda_{n-1})B(n)^{-1}$ has a global anti-diagonal eigenbasis. 
By the following proposition, up to column scaling of $B(n)$, this is the only family of examples.
The final remark in \S\ref{subsec:discreteContinuous} gives an an intuitive interpretation of this proposition.

\begin{proposition}\label{prop:ADC}
A lower-unitriangular $n \times n$ matrix $Q$ is a global anti-diagonal conjugator if and only if $Q = B(n)$.
\end{proposition}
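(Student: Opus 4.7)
The forward direction is immediate from Lemma~\ref{lemma:binomialNew}(iv), together with the observation that the top-left $m \times m$ block of $B(n)$ is $B(m)$, so every top-left submatrix of $B(n)$ conjugates $J(m)$ to an upper-triangular matrix with diagonal $(-1)^x$. For the converse I would induct on $n$, the base case $n=1$ being trivial. Suppose the result holds at size $n-1$ and let $Q$ be a lower-unitriangular global anti-diagonal conjugator of size $n$. Its top-left $(n-1) \times (n-1)$ block $Q'$ inherits the global anti-diagonal conjugator property, so by induction $Q' = B(n-1)$. Thus $Q_{k,y} = \binom{k}{y}$ for $0 \le k \le n-2$, and it remains to show $Q_{n-1, y} = \binom{n-1}{y}$ for $y \in \{0, 1, \ldots, n-2\}$, the diagonal entry $Q_{n-1, n-1}=1$ being given by unitriangularity.

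Let $q(0), \ldots, q(n-1)$ denote the columns of $Q$. The anti-diagonal conjugator relation reads $J(n)q(d) = \sum_{x=0}^{d} R_{xd}\, q(x)$ with $R_{dd} = (-1)^d$. For each $d \in \{0,1,\ldots,n-2\}$ I would read off row~$k$ to get
\[ Q_{n-1-k,\,d} = \sum_{x=0}^{d} R_{xd}\, Q_{k,x}. \]
For $k \in \{1, \ldots, n-2\}$ both $k$ and $n-1-k$ lie in $\{1,\ldots,n-2\}$, so all matrix entries are supplied by the inductive hypothesis, giving
\[ \binom{n-1-k}{d} = \sum_{x=0}^{d} R_{xd} \binom{k}{x}. \]
Since $\binom{n-1-k}{d}$ and $(-1)^d\binom{k}{d}$ have matching leading coefficients $(-1)^d/d!$ in~$k$, transposing the $x=d$ term yields
\[ \binom{n-1-k}{d} - (-1)^d\binom{k}{d} = \sum_{x=0}^{d-1} R_{xd}\binom{k}{x}, \]
with both sides of degree at most $d-1$ in $k$. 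A polynomial of degree at most $d-1$ is determined by its values at any $d$ distinct points, and we have $n-2 \ge d$ such points available, so $R_{0d}, \ldots, R_{d-1,d}$ are uniquely determined.

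Because $B(n)$ itself is a global anti-diagonal conjugator, its columns $v(0), \ldots, v(n-1)$ satisfy the very same relation, so the unique solution must be the one supplied by Lemma~\ref{lemma:binomialNew}(iv), namely $R_{xd} = (-1)^x \binom{n-1-x}{n-1-d}$. Finally, specialising the column-$d$ relation to row $k=0$ leaves only the $x=0$ term on the right (as $Q_{0,x} = [x=0]$), giving
\[ Q_{n-1,d} = R_{0d} = \binom{n-1}{n-1-d} = \binom{n-1}{d}, \]
which completes the induction. The main subtlety is that the $d+1$ unknowns $R_{0d}, \ldots, R_{dd}$ must be pinned down using only the rows $k \in \{1, \ldots, n-2\}$, since the row $k=0$ equation involves the very unknown $Q_{n-1,d}$ we are trying to compute and the row $k=n-1$ equation involves bottom-row entries not yet determined; the polynomial-interpolation step handles this precisely because the bound $d \le n-2$ guarantees enough data points.
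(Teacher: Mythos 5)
Your proof is correct, and while it shares the paper's overall skeleton (induction on $n$, identifying the top-left block with $B(n-1)$, then pinning down the bottom row via the relation $J(n)q(d) = \sum_{x\le d} R_{xd}\,q(x)$), the mechanism you use for the crux is genuinely different. The paper determines the unknown coefficients explicitly: it adds the $J(n)$-action formulas for $v(d)$ and $v(d-1)$ to manufacture the vector $\bigl(\binom{n}{d}, \binom{n-1}{d}, \ldots, \binom{1}{d}\bigr)^{\mathrm t}$ as a combination of the $v(x)$, subtracts, and then argues that a nonzero discrepancy concentrated in position $0$ would be a polynomial of degree $n-1$, contradicting membership in $V_d$. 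You instead observe that the coefficients $R_{0d},\ldots,R_{d-1,d}$ are already overdetermined by the $n-2$ interior rows $k\in\{1,\ldots,n-2\}$, where both sides are known from the inductive hypothesis; since the difference of the two degree-$d$ terms drops the degree to at most $d-1$ and $n-2\ge d$ interpolation points are available, uniqueness forces $R_{xd}$ to agree with the values for $B(n)$ supplied by Lemma~\ref{lemma:binomialNew}(iv), after which the row $k=0$ equation hands you $Q_{n-1,d}=R_{0d}=\binom{n-1}{d}$ for free. Your route avoids the paper's Vandermonde-style binomial manipulations entirely, at the cost of invoking the prior knowledge that $B(n)$ itself is a global anti-diagonal conjugator (which both proofs need anyway) and of relying on polynomial interpolation at the integer points $1,\ldots,n-2$; note that, exactly like the paper's degree argument, this tacitly assumes these points are distinct in $\FF$, so both arguments really require characteristic $0$ or characteristic exceeding $n$, despite the stated generality of an arbitrary infinite field. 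Your bookkeeping of which rows may legitimately be used — excluding $k=0$ and $k=n-1$ because they involve the undetermined bottom row — is exactly the right subtlety to flag.
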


\begin{proof}
We work by induction on $n$. The result is obvious if $n=1$. For the inductive step, we suppose that
$Q$ is an $(n+1) \times (n+1)$ global anti-diagonal conjugator. The $n \times n$ top-left submatrix of $Q$
is a global anti-diagonal conjugator, and so by induction, it is $B(n)$. Let $Q_{nd} = \alpha_d$ for $d \le n$.
Let $\uw(d) \in \FF^{n+1}$ be column $d$ of $Q$ of length $n+1$ and let $v(d) \in \FF^n$ 
be column $d$ of $B(n)$ of length $n$ (as usual).
Thus if $d \in \{0,\ldots,n-1\}$ then $\uw(d)_y = v(d)_y$ if $y < n$ and $\uw(d)_n = \alpha_d$.
By the hypothesis that $J(n+1)Q = QU$ where $U$ is upper-triangular with the specified diagonal entries, we have
\begin{equation}\label{eq:ADCeq} J(n+1)\uw(d) \in (-1)^d \uw(d) + \bigl\langle \uw(0), \ldots, \uw(d-1) \bigr\rangle. \end{equation}

We shall deduce from~\eqref{eq:ADCeq} that $\alpha_d = \binom{n}{d}$.
Since  $J(n+1)\uw(0) = \uw(0)$ we have $\alpha_0 = \uw(0)_n = \uw(0)_0 = 1$. Moreover,
$\alpha_n = Q_{nn} = 1$ since $Q_{nn}$ is uni-triangular. Let $1 \le d < n$ and
let $\mu_0, \ldots, \mu_{d-1}$ be the unique scalars given by~\eqref{eq:ADCeq} such that
\begin{equation}\label{eq:ADCeq2} J(n+1)\uw(d) = (-1)^d \uw(d) + \sum_{x=0}^{d-1} \mu_x \uw(x). \end{equation}
Projecting down to $\FF^n$ by taking the entries in positions $0$, $1$, \ldots, $n-1$ on either side we 
may replace $u(x)$ with $v(x)$ for each $x \le d$ to get
\begin{equation}\label{eq:ADCeq3} \bigl( \alpha_d, \binom{n-1}{d}, \ldots, \binom{1}{d} \bigr)^t = (-1)^d v(d) + \sum_{x=0}^{d-1} \mu_x v(x). \end{equation}
By Lemma~\ref{lemma:binomialNew}(iv), the matrix representing $J(n)$ in the basis $v(0), \ldots v(n-1)$ is
upper-triangular with entries $(-1)^x \binom{n-1-x}{n-1-y}$. Hence
\begin{equation}\label{eq:ADCeq4} J(n)v(d) = (-1)^d v(d) + \sum_{x=0}^{d-1} (-1)^x \binom{n-1-x}{n-1-d} v(x). \end{equation}
This is a somewhat similar relation to~\eqref{eq:ADCeq3}.
Indeed $J(n)v(d)_x = \binom{n-1-x}{d}$, and
adding $\binom{n-1-x}{d-1}$ gives $\binom{n-x}{d}$, 
agreeing with the left-hand side of~\eqref{eq:ADCeq3} for $x \ge 1$.
Observe that 
$\binom{n-1-x}{d-1} = \bigl( J(n)v(d-1)\bigr)_x$ and, by replacing $d$ with $d-1$ in the previous displayed equation,
\begin{equation}\label{eq:ADCeq4b} J(n)v(d-1) = (-1)^{d-1} v(d-1) + \sum_{x=0}^{d-2} (-1)^x \binom{n-1-x}{n-d} v(x). \end{equation}
Adding~\eqref{eq:ADCeq4} and~\eqref{eq:ADCeq4b} and using that $\binom{n-1-x}{d} + \binom{n-1-x}{d-1} = \binom{n-x}{d}$
on the left-hand side and $\binom{n-1-x}{n-1-d} + \binom{n-1-x}{n-d} = \binom{n-x}{n-d}$ on the right-hand side
gives
\begin{align} \nonumber \Bigl( \binom{n}{d}, \binom{n-1}{d}, \ldots, \binom{1}{d} \Bigr)^\t &= 
J(n)v(d) + J(n)v(d-1) \\&= (-1)^d v(d) + \sum_{x=0}^{d-1} (-1)^x \binom{n-x}{n-d} v(x) \label{eq:ADCeq5}. \end{align}
Taking the difference of~\eqref{eq:ADCeq3} and~\eqref{eq:ADCeq5} we get
\[ \Bigl( \alpha_d - \binom{n}{d}, 0, \ldots, 0 \Bigr)^\t = \sum_{x=0}^{d-1} \bigl( \mu_x - (-1)^x \binom{n-x}{n-d} \bigr) v(x). \]
The function $f$ defined by $f(0) = 1$ and $f(x) = 0$ for $x \in \{1,\ldots, n-1\}$ has $n-1$ zeros and is therefore
a polynomial of degree $n-1$. Thus, if non-zero, the left-hand side is in $V_{n}$ but not in $V_e$ for any $e < n$.
On the other hand, the right-hand side is clearly in $V_{d}$. Since $d < n$, we conclude that $\alpha_d = \binom{n}{d}$, as required.
\end{proof}

\begin{theorem}\label{thm:GB} Let $L$ be a lower-triangular $n \times n$ matrix with entries in~$\FF$ having 
diagonal entries $\lambda_0$, $\ldots$, $\lambda_{n-1}$
read from top-bottom. Then $L$ has global anti-diagonal eigenbasis action 
if and only if $H$  is equal to the binomial transform $H^{(\lambda_0,
\ldots, \lambda_{n-1})}$.
\end{theorem}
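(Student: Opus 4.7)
The plan is to deduce Theorem~\ref{thm:GB} quickly from Proposition~\ref{prop:ADC} together with Lemma~\ref{lemma:practical}, after clearing up one minor normalisation issue.

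For the \emph{if} direction, assume $L = H^{(\lambda_0,\ldots,\lambda_{n-1})}$. By the equivalence of (i) and (iii) in Lemma~\ref{lemma:practical} we have $Lv(d) = \lambda_d v(d)$, so the columns $v(0),\ldots,v(n-1)$ of $B(n)$ are eigenvectors of $L$. It remains to observe that $B(n)$ is a global anti-diagonal conjugator: Lemma~\ref{lemma:binomialNew}(iv) gives $(B(n)^{-1}J(n)B(n))_{xy} = (-1)^x\binom{x^\star}{y^\star}$, which vanishes when $y < x$ (since $y^\star > x^\star$) and equals $(-1)^x$ when $y=x$, so $B(n)^{-1}J(n)B(n)$ is upper-triangular with the required diagonal; since the top-left $m\times m$ submatrix of $B(n)$ is $B(m)$, the same holds at every size.

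For the \emph{only if} direction, suppose $L$ has global anti-diagonal eigenbasis action, witnessed by a global anti-diagonal conjugator $Q$ whose columns are eigenvectors of $L$. The first step is to replace $Q$ by a lower-unitriangular conjugator. Since $Q$ must be invertible, its diagonal entries are nonzero; let $D$ be the diagonal matrix whose entries are those of $Q$, and set $Q' = QD^{-1}$. Then $Q'$ is lower-unitriangular, its columns are still eigenvectors of $L$, and for every $m$ the top-left submatrix of $Q'$ is obtained from that of $Q$ by the analogous diagonal scaling, so
\[ (Q')^{-1}J(m)Q' = D\bigl(Q^{-1}J(m)Q\bigr)D^{-1} \]
(with the obvious truncation of $D$), which remains upper-triangular with the same diagonal $(-1)^x$ since conjugation of an upper-triangular matrix by a diagonal matrix preserves both properties. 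Hence $Q'$ is still a global anti-diagonal conjugator, and Proposition~\ref{prop:ADC} forces $Q' = B(n)$.

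It then follows that $Lv(d) = \mu_d v(d)$ for some scalars $\mu_d$. Because $L$ is lower-triangular and $v(d)$ has first nonzero entry $v(d)_d = 1$, reading off the $d$-th coordinate of $Lv(d) = \mu_d v(d)$ yields $\mu_d = L_{dd} = \lambda_d$. The equivalence of (i) and (iii) in Lemma~\ref{lemma:practical} now gives $L = H^{(\lambda_0,\ldots,\lambda_{n-1})}$. The only mildly delicate point is the column-rescaling step; once that is in hand the result is essentially a restatement of Proposition~\ref{prop:ADC} combined with Lemma~\ref{lemma:practical}.
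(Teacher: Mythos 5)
Your proof is correct and takes the same route as the paper, which simply declares the theorem ``immediate from Definition~\ref{defn:GB} and Proposition~\ref{prop:ADC}.'' Your explicit column-rescaling step $Q' = QD^{-1}$ is a worthwhile addition rather than a detour: Definition~\ref{defn:GB} does not require the conjugator to be unitriangular while Proposition~\ref{prop:ADC} does, so the reduction you carry out is exactly the detail the paper's one-line proof leaves implicit (it is hinted at only in the remark ``up to column scaling of $B(n)$'' preceding the proposition).
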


\begin{proof}
This is immediate from Definition~\ref{defn:GB} and Proposition~\ref{prop:ADC}
\end{proof}

\section{Factorizable weights and proofs of Theorems~\ref{thm:ergodic} and~\ref{thm:factorization}}
\label{sec:factorization}

The second hypothesis in Theorem~\ref{thm:ergodic},
that $\gamma_{[x,x]} > 0$ for all $x \in n$ and $\gamma_{[x-1,x]} > 0$
for all non-zero $x \in n$, implies that the steps $x \mapsto x^\star$ and $x \mapsto (x-1)^\star$ have positive
probability in the $\gamma$-weighted involutive walk for all relevant $x$.
For later use in \S\ref{sec:reversibility}, we take a weaker hypothesis below.

\begin{lemma}\label{lemma:longWalk}
In an involutive walk on $n$, if
$x^\star$ is accessible from $x$ for all $x \in n$ and $(x-1)^\star$ is accessible from $x$ for all non-zero
$x \in n$ then the walk is irreducible.
\end{lemma}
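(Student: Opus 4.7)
The plan is to combine the two accessibility hypotheses into a single reduction step and then chain it to show that every state can be reached from every other. Throughout, write $u \longstep v$ to mean that $v$ is accessible from $u$ in the involutive walk.

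First I would apply the first hypothesis twice: once at $x$, giving $x \longstep x^\star = n-1-x$, and once at $n-x$, giving $n-x \longstep (n-x)^\star = x-1$. The second hypothesis, applied at $x > 0$, gives $x \longstep (x-1)^\star = n-x$. Concatenating these yields the key reduction
\begin{equation*}
x \;\longstep\; n-x \;\longstep\; x-1 \qquad\text{for every } x \in n \text{ with } x > 0.
\end{equation*}

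Iterating the key reduction shows that every state admits a path to $0$. From $0$ the first hypothesis gives a path to $0^\star = n-1$, and then the key reduction, applied successively at $n-1, n-2, \ldots, 1$, produces the traversal
\begin{equation*}
0 \;\longstep\; n-1 \;\longstep\; n-2 \;\longstep\; \cdots \;\longstep\; 1 \;\longstep\; 0,
\end{equation*}
which visits every element of $n$. Concatenating the descent from an arbitrary $x$ to $0$ with an appropriate initial segment of this cycle shows that every state is accessible from every other, proving irreducibility.

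The only delicate point is the derivation of the reduction $x \longstep x-1$; once it is in hand, the inductive descent to $0$ and the cycle through all states are immediate. I therefore expect this to be a short argument whose main task is bookkeeping the stars correctly.
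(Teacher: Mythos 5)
Your argument is correct and, once the key reduction $x \longstep n-x \longstep x-1$ is unfolded, it traces out exactly the zigzag cycle $0 \longstep n-1 \longstep 1 \longstep n-2 \longstep \cdots \longstep 1 \longstep n-1 \longstep 0$ that the paper uses, so this is essentially the same proof packaged as an iterated descent to $0$. (The application of the first hypothesis at $x$ itself, giving $x \longstep n-1-x$, is never used and could be dropped.)
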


\begin{proof}
The hypothesis implies that, if the involutive walk is allowed to take multiple steps, then each transition in 
\[ 0 \longstep n-1 \longstep 1 \longstep n-2 \longstep 2 \longstep \cdots \longstep n-2 \longstep 1 \longstep n-1 
\longstep 0 \]
has non-zero probability. Therefore all states communicate.
\end{proof}

We are now ready to prove Theorem~\ref{thm:ergodic}.

\begin{proof}[Proof of Theorem~\ref{thm:ergodic}]
If (i) holds then, by~\eqref{eq:Pgamma}, the steps $x \mapsto 0^\star = n-1$ and $n-1 \mapsto x^\star$ have non-zero probability for all $x \in n$.
Hence the involutive walk is irreducible. Moreover, since  $n-1 \mapsto 0^\star = n-1$ is a possible step, it follows from a standard criterion
that the involutive walk is aperiodic. If (ii) holds then, again by~\eqref{eq:Pgamma},
the steps $x \mapsto x^\star$ and $x \mapsto (x-1)^\star$ have non-zero probabilities
and so, by Lemma~\ref{lemma:longWalk}, the involutive walk is again irreducible.
Moreover, if $n=2m$ then $m \mapsto (m-1)^\star = m$ is a possible step and if $n=2m+1$ then $m \mapsto m^\star = m$
is a possible step. Hence the involutive walk is aperiodic.
Thus in either case there is a unique invariant distribution to which the involutive
walk converges by the ergodic theorem.
\end{proof}

We now turn to reversibility.
The detailed balance equations for an invariant probability distribution
$\pi$ on $\n$ are
$\pi_x P(\gamma)_{xz} = \pi_z P(\gamma)_{zx}$. Equivalently,
\begin{equation} \label{eq:db} \pi_x \frac{\gamma_{[z^\star,x]}}{\norm{\gamma}_x}
= \pi_z \frac{\gamma_{[x^\star,z]}}{\norm{\gamma}_z} \end{equation}
 for $x$, $z \in n$.
Since $\gamma_\varnothing = 0$ and $[z^\star,x]$ is non-empty if and only if $[x^\star,z]$ is non-empty,
we may assume that $z^\star \le x$ in this equation. 

\begin{samepage}
\begin{lemma}{\ }\label{lemma:invariant}
\begin{thmlist}
\item If $\alpha$ is an atomic weight then the $\alpha$-weighted involutive walk
is reversible with invariant distribution $\pi(\alpha)$ such that
$\pi(\alpha)_x \propto \alpha_{x^\star} \norm{\alpha}_x$. 
 \item If $\beta$ is a $\star$-symmetric weight then the $\beta$-weighted involutive walk
 is reversible with an invariant distribution $\pi(\beta)$ such that
 $\pi(\beta)_x \propto \norm{\beta}_x$.
\end{thmlist}
\end{lemma}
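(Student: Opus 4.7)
The plan is to verify directly the detailed balance equations~\eqref{eq:db} in each case; since detailed balance with respect to a probability distribution $\pi$ both implies that $\pi$ is invariant and that the walk is reversible, this suffices for both claims. In each case I would first check that the proposed $\pi$ is actually a well-defined probability distribution, that is, that the claimed proportionality function is strictly positive and can be normalized to sum to~$1$. For (i), $\alpha_{x^\star}N(\alpha)_x$ is positive because $\alpha$ is a weight, so $N(\alpha)_x > 0$ by definition, and atomicity together with $\alpha_{x^\star} = \alpha_{[x^\star,x^\star]}$ forces $\alpha_{x^\star} > 0$ (otherwise $N(\alpha)_{x^\star}$ would vanish). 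For (ii), $N(\beta)_x > 0$ is immediate.

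For part (i), with $z^\star \le x$ (so also $x^\star \le z$), atomicity gives $\alpha_{[z^\star,x]} = \alpha_{z^\star}$ and $\alpha_{[x^\star,z]} = \alpha_{x^\star}$. Substituting into~\eqref{eq:db} with $\pi_x \propto \alpha_{x^\star}N(\alpha)_x$ yields
\[ \pi_x \frac{\alpha_{[z^\star,x]}}{N(\alpha)_x} \propto \alpha_{x^\star}\alpha_{z^\star} = \alpha_{z^\star}\alpha_{x^\star} \propto \pi_z \frac{\alpha_{[x^\star,z]}}{N(\alpha)_z}, \]
which is symmetric in $x$ and $z$, as required.

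For part (ii), with $\pi_z \propto N(\beta)_z$, the factors of $N(\beta)$ in~\eqref{eq:db} cancel and we are left with having to verify $\beta_{[z^\star,x]} = \beta_{[x^\star,z]}$, which is exactly the $\star$-symmetry of $\beta$ applied to the interval $[z^\star,x]$ (using $(z^\star)^\star = z$). The case $z^\star > x$ (equivalently $x^\star > z$) is trivial because both intervals are empty and both sides of~\eqref{eq:db} vanish.

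Neither part presents a real obstacle; the only mild care is to remember that $\gamma_{\varnothing} = 0$ so the equation holds trivially whenever $z^\star > x$, and to check positivity of $\pi$ in case (i). The main conceptual point, which will be used in the proof of Theorem~\ref{thm:factorization}, is that atomicity and $\star$-symmetry each independently force reversibility, with the invariant distribution packaging the ``asymmetric'' information $\alpha_{x^\star}$ in the atomic case and pure normalization in the $\star$-symmetric case.
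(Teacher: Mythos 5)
Your proof is correct and is essentially the paper's own argument: substitute the proposed $\pi$ into the detailed balance equations \eqref{eq:db}, use atomicity (resp.\ $\star$-symmetry) to simplify, and observe that both sides become symmetric in $x$ and $z$. One small caveat: your parenthetical claim that atomicity forces $\alpha_{x^\star} > 0$ because otherwise $N(\alpha)_{x^\star}$ would vanish is not quite right --- for an atomic weight $N(\alpha)_{x^\star} = \alpha_0 + \cdots + \alpha_{x^\star}$, which can be positive even if the single term $\alpha_{x^\star}$ is zero --- but this is immaterial, since the lemma only needs $\pi$ to be normalizable, and that follows from the term $x = n-1$ together with $\alpha_0 = N(\alpha)_0 > 0$.
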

\end{samepage}

\begin{proof}
When $\alpha$ is atomic the equations~\eqref{eq:db} simplify  to
$\pi_x \alpha_{z^\star} /N(\alpha)_x = \pi_z \alpha_{x^\star} / N(\alpha)_z$
for $x$, $z \in n$ such that $z^\star \le x$. 
Clearly one solution has each $\pi_x$ proportional
to $\alpha_{x^\star} N(\alpha)_x$. 
If $\beta$ is $\star$-symmetric then, by definition, $\beta_{[z^\star,x]} = \beta_{[x^\star,z]}$, and
so one solution
to~\eqref{eq:db} has each $\pi_x$ proportional to~$N(\beta)_x$.
\end{proof}

Since atomic weights are strictly positive, both invariant distributions are strictly positive.
Reversibility is preserved by products of weights.

\begin{lemma}\label{lemma:pointwiseProductReversible}
Let $\alpha$ and $\beta$ be  weights. 
If the $\alpha$-weighted and $\beta$-weighted involutive walks on $n$
are reversible with respect to invariant distributions 
proportional to $\theta$ and $\phi$, respectively,
then the $\alpha\beta$-weighted involutive walk on $n$ is reversible
with respect to an invariant distribution proportional to $\theta_x \phi_x N(\alpha\beta)_x
/N(\alpha)_xN(\beta)_x$.
\end{lemma}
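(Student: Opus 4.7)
The plan is to verify the detailed balance equations for the $\alpha\beta$-weighted involutive walk directly, by multiplying together the detailed balance equations for the $\alpha$- and $\beta$-weighted walks.

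First I would write out what reversibility gives us for each of the two walks. Using~\eqref{eq:db}, for all $x, z \in n$,
\[ \theta_x \frac{\alpha_{[z^\star, x]}}{N(\alpha)_x} = \theta_z \frac{\alpha_{[x^\star, z]}}{N(\alpha)_z}, \qquad \phi_x \frac{\beta_{[z^\star, x]}}{N(\beta)_x} = \phi_z \frac{\beta_{[x^\star, z]}}{N(\beta)_z}. \]

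Next, I would take the pointwise product of these two equations. Since $\alpha\beta$ is defined by pointwise multiplication on intervals, $(\alpha\beta)_{[y,x]} = \alpha_{[y,x]}\beta_{[y,x]}$, and the product becomes
\[ \frac{\theta_x \phi_x}{N(\alpha)_x N(\beta)_x} (\alpha\beta)_{[z^\star,x]} = \frac{\theta_z \phi_z}{N(\alpha)_z N(\beta)_z} (\alpha\beta)_{[x^\star,z]}. \]
Multiplying both sides by the symmetric factor $N(\alpha\beta)_x N(\alpha\beta)_z$ and then dividing each side by $N(\alpha\beta)_x N(\alpha\beta)_z$ in the appropriate grouping, one recognises the detailed balance equations
\[ \pi_x \frac{(\alpha\beta)_{[z^\star,x]}}{N(\alpha\beta)_x} = \pi_z \frac{(\alpha\beta)_{[x^\star,z]}}{N(\alpha\beta)_z} \]
for the $\alpha\beta$-weighted walk with
\[ \pi_x \propto \frac{\theta_x \phi_x N(\alpha\beta)_x}{N(\alpha)_x N(\beta)_x}, \]
which is the claimed invariant distribution.

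There is essentially no obstacle; the only point requiring minor attention is that $\pi$ really is a probability distribution, which amounts to noting that $\pi_x \ge 0$ with $N(\alpha\beta)_x > 0$ (this is part of the hypothesis that $\alpha\beta$ is a weight with the $\alpha\beta$-weighted walk defined), so that $\pi$ can be normalized. Reversibility plus the detailed balance equations then automatically guarantee that $\pi$ is an invariant distribution in the usual way (summing the detailed balance equations over $z$).
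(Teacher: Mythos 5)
Your proposal is correct and follows exactly the paper's own argument: multiply the two instances of the detailed balance equations~\eqref{eq:db} for $\alpha$ and $\beta$, and observe that $\pi_x \propto \theta_x\phi_x N(\alpha\beta)_x/N(\alpha)_xN(\beta)_x$ then satisfies~\eqref{eq:db} for the weight $\alpha\beta$. The extra remark on normalizability is harmless but not needed beyond what the paper assumes.
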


\begin{proof}
Multiplying the two cases of~\eqref{eq:db} for $\alpha$ and $\beta$ we obtain
\[ \theta_x \phi_x \frac{\alpha_{[z^\star,x]}\beta_{[z^\star,x]}}{\norm{\alpha}_x \norm{\beta}_x}  = 
\theta_z \phi_z \frac{\alpha_{[x^\star,z]}\beta_{[x^\star,z]}}{\norm{\alpha}_z \norm{\beta}_z}
\]
for all $x$, $z \in n$. Therefore $\pi_x = \theta_x \phi_x \norm{\alpha\beta}_x / \norm{\alpha}_x\norm{\beta}_x$ solves~\eqref{eq:db} for the weight $\alpha\beta$.
\end{proof}

We are now ready to prove Theorem~\ref{thm:factorization}.

\begin{proof}[Proof of Theorem~\ref{thm:factorization}]
Suppose that $\gamma = \alpha\beta$ factorizes as in the theorem. By Lemma~\ref{lemma:invariant},
the $\alpha$- and $\beta$-weighted involutive walks are reversible with respect to invariant distributions proportional to $\alpha_{x^\star} N(\alpha)_x$ and $N(\beta)_x$, respectively. Therefore, by Lemma~\ref{lemma:pointwiseProductReversible}, the $\gamma$-weighted involutive walk is reversible with
respect to an invariant distribution proportional to 
\[ \alpha_{x^\star} N(\alpha)_x N(\beta)_x \times \frac{N(\alpha\beta)_x}{N(\alpha)_xN(\beta)_x} =
\alpha_{x^\star} N(\alpha\beta)_x. \]
By Theorem~\ref{thm:ergodic}, this invariant distribution is unique.

Suppose conversely that the $\gamma$-weighted involutive walk is reversible
with respect to an invariant distribution $\pi$. By the hypothesis of Theorem~\ref{thm:factorization} and Theorem~\ref{thm:ergodic},
the chain is irreducible and ergodic, and so this invariant distribution is strictly positive.
We must define an atomic weight $\alpha$ so that $\beta$, defined by $\beta = \gamma/\alpha$,
is $\star$-symmetric.
Taking $y = z^\star$ in~\eqref{eq:db}
and rearranging we obtain
\begin{equation}\label{eq:db2} 
\frac{N(\gamma)_{y^\star}}{\pi_{y^\star}} \gamma_{[y,x]}= \frac{N(\gamma)_{x^\star}}{\pi_{x^\star}} \gamma_{[x^\star,y^\star]} 
\end{equation}
for all $x$, $y \in \n$. 
Noting that the fractions on each side depend only on $y^\star$ and only on $x^\star$, respectively, we 
are led to define an atomic weight $\alpha$ by $\alpha_{[y,x]} = \frac{\pi_{y^\star}}{N(\gamma)_{y^\star}}$.
Now by~\eqref{eq:db2},
\[ \beta_{[y,x]} = \frac{\gamma_{[y,x]}}{\alpha_y} = \frac{N(\gamma)_{y^\star}}{\pi_{y^\star}}\gamma_{[y,x]}
=  \frac{N(\gamma)_{x^\star}}{\pi_{x^\star}} \gamma_{[x^\star,y^\star]} = \frac{\gamma_{[x^\star,y^\star]}}{\alpha_{y^\star}}
= \beta_{[x^\star,y^\star]} \]
so $\beta$ is $\star$-symmetric, as required.
\end{proof}

\section{The $\protect\gammai{c}$-weighted involutive walk}\label{sec:gammaInfinityWalk}
\label{sec:gammaInfinite} 

Let $c \in \R^{>0}$. Recall that $\gammai{c}$ is the weight defined by $\gammai{c}_{[y,x]} = \binom{x}{y}c^{x-y}$
for $y \le x$.
Since $N(\gammai{c})_x = (c+1)^x$ we have 
\begin{equation}\label{eq:Hgammai} H(\gammai{c})_{xy}
= \binom{x}{y}\frac{c^{x-y}}{(c+1)^x}.\end{equation} 
In particular, $H(\gamma^{(1)})$ is the Pascal's Triangle matrix with rows scaled so
that it is stochastic.
Observe that the eigenvalues of $H(\gammai{c})$, and so the anti-diagonal entries of $P(\gammai{c})$, 
are $1/(c+1)^d$ for $d \in \ne$.

\subsection{Stochastic properties of $P(\protect\gammai{c})$}\label{subsec:gammaInfiniteFactorization}
Since $\gammai{c}$ is strictly positive, Theorem~\ref{thm:ergodic} implies
that the $\gammai{c}$-weighted involutive walk is irreducible, recurrent and ergodic
with a unique invariant distribution.
To find the invariant distribution we first show that 
$\gammai{c}$ is factorizable. (This is motivated by the interpretation of $\gammai{c}$ as a limiting
case of the clearly factorizable weights $\gammac{a}{ac}$.) 
From the required form $\binom{x}{y}c^{x-y} 
= \alpha_y \beta_{[y,x]}$, 
where $\beta$ is $\star$-symmetric, we must define $\alpha_y$ so that $\alpha_yx!c^{x-y} / y!(x-y)! $ is
invariant under $(x,y) \mapsto (y^\star,x^\star)$. This already holds for $c^{x-y}$ and $(x-y)!$, and similarly
to the proof of Theorem~\ref{thm:factorization}, we may cancel $y!$
and introduce the necessary factor of $y^\star!$ by taking
$\alpha_y$ proportional to $1/y!(n-1-y)!$. We therefore
define $\alpha_y = \binom{n-1}{y}$ as a weight with domain~$n$, and find that
\begin{equation}\label{eq:betai} \beta_{[y,x]} = \frac{\binom{x}{y}c^{x-y}}{\binom{n-1}{y}} = \frac{x!y^\star!}{(x-y)!(n-1)!}c^{x-y} \end{equation}
is indeed $\star$-symmetric. 
Theorem~\ref{thm:factorization} now implies that the
$\gammai{c}$-weighted involutive walk on $n$ is reversible
with unique invariant distribution proportional to $\alpha_{x^\star}N(\gammai{c})_x$; that is
\[ \pi_x \propto \binom{n-1}{x} \Bigl( \frac{c+1}{c} \Bigr)^x .\]
Explicitly, $\displaystyle \pi_x = \binom{n-1}{x} \frac{(c+1)^x c^{n-1-x}}{(2c+1)^{n-1}}$.

\subsection{Spectrum of $\protect\gammai{c}$}
We now prove
that the eigenvalues of $P(\gammai{c})$ are
$(-1)^d/c^d$ for $d \in \ne$ using the condition in Lemma~\ref{lemma:practical}(iii) 
for $H(\gammai{c})$ to be a binomial transform.
Recall from~\eqref{eq:v} that $v(d) \in \R^n$ is the column vector with entries \hbox{$v(d)_x = \binom{x}{d}$}.

\begin{lemma}Let $c \in \R^{>0}$. Then \label{lemma:gammaInfiniteEigenvectors}
\[ H(\gammai{c}) v(d) = v(d)/(c+1)^d. \]
\end{lemma}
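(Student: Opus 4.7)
The plan is to verify the stated eigenvalue equation by direct computation, using the explicit formula~\eqref{eq:Hgammai} for $H(\gammai{c})_{xy}$ together with the entries $v(d)_x = \binom{x}{d}$. The key algebraic identity in play will be the ``subset of a subset'' relation $\binom{x}{y}\binom{y}{d} = \binom{x}{d}\binom{x-d}{y-d}$, which turns the matrix action into a single binomial sum that collapses by the binomial theorem.

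Concretely, I would begin by writing
\[
\bigl(H(\gammai{c}) v(d)\bigr)_x \;=\; \sum_{y=0}^{x} \binom{x}{y}\frac{c^{x-y}}{(c+1)^x} \binom{y}{d},
\]
and then apply the subset-of-a-subset identity to pull out $\binom{x}{d}/(c+1)^x$. After reindexing with $k = y-d$, the remaining sum is
\[
\sum_{k=0}^{x-d} \binom{x-d}{k} c^{(x-d)-k} \;=\; (c+1)^{x-d}
\]
by the binomial theorem. Substituting back gives $\binom{x}{d}(c+1)^{-d} = v(d)_x/(c+1)^d$, which is exactly the claim. Since the argument is valid for every $x \in n$ (with the convention that $\binom{y}{d}=0$ for $y<d$, so the sum really starts at $y=d$), this finishes the identity at the level of each coordinate.

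No serious obstacle is expected: the identity is a routine manipulation of binomial coefficients, and the only ``checking'' needed is that the sum ranges line up correctly once $\binom{y}{d}$ is interpreted as zero when $y<d$. The point of recording this as a lemma is that it is precisely condition~(iii) of Lemma~\ref{lemma:practical} for the matrix $H(\gammai{c})$ with $\lambda_d = 1/(c+1)^d$; combined with Proposition~\ref{prop:practical}, this will immediately identify $H(\gammai{c})$ as a binomial transform and give the eigenvalues $(-1)^d/(c+1)^d \cdot (c+1)^d/c^d$... wait, more cleanly, it yields the anti-diagonal eigenvalue property for $P(\gammai{c})$ with eigenvalues $(-1)^d/(c+1)^d$ for $H(\gammai{c})$ and, after applying $J(n)$, the values $(-1)^d$ times the corresponding anti-diagonal entries of $P(\gammai{c})$ asserted in Theorem~\ref{thm:spectrum}.
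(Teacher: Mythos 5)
Your proof is correct and is essentially the paper's own argument: both pull out $\binom{x}{d}$ via the identity $\binom{x}{y}\binom{y}{d}=\binom{x}{d}\binom{x-d}{y-d}$ and collapse the remaining sum by the binomial theorem (the paper sums $\sum_y \binom{x-d}{y-d}c^{-(y-d)}=(1+\tfrac{1}{c})^{x-d}$ rather than reindexing by $k=y-d$, but this is the same computation). The trailing aside about the eigenvalues is a little muddled and should be dropped from a final write-up; the correct conclusion is simply that $H(\gammai{c})$ is the binomial transform with $\lambda_d = 1/(c+1)^d$, so by Proposition~\ref{prop:practical} the eigenvalues of $P(\gammai{c})$ are $(-1)^d/(c+1)^d$.
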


\begin{proof}
Since $H(\gammai{c})_{xy} = \binom{x}{y} \frac{c^{x-y}}{(c+1)^x}$ we have
\[ \begin{split} 
\bigl(H(\gammai{c})v(d)\bigr)_x &= \sum_{y=0}^{n-1} \binom{x}{y}\frac{c^{x-y}}{(c+1)^x} \binom{y}{d} 
= \binom{x}{d} \frac{c^{x-d}}{(c+1)^x} \sum_{y=0}^{n-1} \binom{x-d}{y-d} \frac{1}{c^{y-d}} \\ &\qquad =
\binom{x}{d} \frac{c^{x-d}}{(c+1)^x} \Bigl( 1+\frac{1}{c}\Bigr)^{x-d} = \binom{x}{d}\frac{1}{(c+1)^d}
= \frac{v(d)_x}{(c+1)^d}
 \end{split} \]
as required.
\end{proof}

Hence $H(\gammai{c})$ is the binomial transform $H^{\lambda^{(c)}}$, where $\lambda^{(c)}_d = 1/(c+1)^d$
for each $d \in \ne$, and, by Proposition~\ref{prop:practical}(iii), $P(\gammai{c})$ has 
eigenvalues $(-1)^d/(c+1)^d$ for $d \in \ne$, as required.
This completes the proof of Theorem~\ref{thm:spectrum} for $\gammai{c}$.

\section{The $\protect\gammac{a}{b}$-weighted and $\protect\deltac{a'}{b'}$-weighted involutive walks}\label{sec:gamma}

We now adopt a similar strategy to the previous section to prove Theorem~\ref{thm:spectrum} for 
the $\gammac{a}{b}$ and $\deltac{a'}{b'}$ involutive walks.
It is convenient in this section to extend this definition of $\gammac{a}{b}$ 
to define $\gammac{a}{b}_{[y,x]} = \binom{a+y}{y} \binom{b+x-y}{x-y}$ for all $a$, $b \in \R$
and all $x$, $y \in \N$ with $y \le x$.

\subsection{Preliminaries on multisubsets}\label{subsec:multisubsets}
A combinatorial interpretation of $\gammac{a}{b}$ is helpful and motivates many steps below. Recall that 
$\binom{m+c-1}{c}$, or equally $(-1)^c \binom{-m}{c}$, is the number of
$c$-multisubsets of a set of size $m$, or equivalently, the number of chains
$0 \le y_1 \le \ldots \le y_c \le m-1$ in the ordinal $m$. Thus when $a, b \in \N_0$, 
\[ \scalebox{0.925}{$\displaystyle 
\gammac{a}{b}_{[y,x]} = \bigl| \bigl\{ (y_1,\ldots, y_a, w_1, \ldots, w_b) : 0 \le y_1\le \ldots \le y_a \le y \le w_1 \le \ldots \le w_b \le x \bigr\} \bigr|$.} \]

We write $\smbinom{m}{c}$ for $\binom{m+c-1}{c}$. Note that 
$\smbinom{m}{c} = \binom{m+c-1}{c} = \binom{m+c-1}{m-1} = \smbinom{c+1}{m-1}$, and
so, unlike
the normal binomial coefficient, $\smbinom{m}{c}$ is, separately, a polynomial function of both $m$ and $c$.
We use this fact repeatedly below to deduce results on $\gammac{a}{b}$ for general $a$, $b \in \R$
from the special cases when $a$, $b \in \N_0$. 

\begin{lemma}\label{lemma:mbinom}
Let $m$, $n \in \N$.
\begin{thmlist}
\item If $c \in \N_0$ then $\sum_{y=0}^x \smbinom{y+1}{c} = \smbinom{x+1}{c+1}$.
\item If $c$, $d \in \N_0$ then $\sum_{x=0}^{n-1} \smbinom{n-x}{c}\smbinom{x+1}{d} = \smbinom{n}{c+d+1}$.
\item If $c$, $d \in \N_0$ then
$\sum_{y=0}^x \smbinom{x-y+1}{c}\binom{y}{d} = \binom{x+c+1}{c+d+1}$.
\item If $c \in \N_0$ then
$\sum_{y=0}^x \smbinom{x-y+1}{c} (-1)^y \binom{m}{y} = (-1)^x \binom{m-c-1}{x}$.
\end{thmlist}
\end{lemma}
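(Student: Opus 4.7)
The plan is to treat each identity in turn, using combinatorial classification of multisubsets for (i) and (ii) and a short generating-function convolution for (iii) and (iv). The key interpretation is that $\smbinom{m}{c} = \binom{m+c-1}{c}$ counts weakly-increasing $c$-tuples in the ordinal $m$; equivalently, $\sum_{z \ge 0}\smbinom{z+1}{c} u^z = (1-u)^{-(c+1)}$.

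For (i), I would classify the $(c+1)$-multisubsets of $\{0,\ldots,x\}$ by their maximum element $y$: the remaining $c$ entries form an arbitrary $c$-multisubset of $\{0,\ldots,y\}$, contributing $\smbinom{y+1}{c}$; summing over $y$ gives the standard hockey-stick identity. For (ii), I would classify the $(c+d+1)$-multisubsets $z_1 \le \cdots \le z_{c+d+1}$ of $\{0,\ldots,n-1\}$ by the $(d+1)$-th smallest element $x = z_{d+1}$, so that the first $d$ entries form a $d$-multisubset of $\{0,\ldots,x\}$ (counted by $\smbinom{x+1}{d}$) and the last $c$ entries form a $c$-multisubset of $\{x,\ldots,n-1\}$ (counted by $\smbinom{n-x}{c}$).

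For (iii), I would read the left-hand side as the coefficient of $u^x$ in the product
\[ (1-u)^{-(c+1)} \cdot u^d(1-u)^{-(d+1)} = u^d(1-u)^{-(c+d+2)}, \]
which evaluates to $\binom{x+c+1}{c+d+1}$. For (iv), combining the same expansion with $\sum_{y \ge 0}(-1)^y\binom{m}{y}u^y = (1-u)^m$ shows that the sum is the coefficient of $u^x$ in $(1-u)^{m-c-1}$, which is $(-1)^x\binom{m-c-1}{x}$ by the generalised binomial theorem, valid for any real exponent and consistent with the convention $\binom{r}{d} = r(r-1)\cdots(r-d+1)/d!$ fixed in the paper.

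I anticipate no significant obstacle; all four identities are standard, and the generating-function argument handles (iii) and (iv) uniformly once the two basic expansions are in place. The only mild subtlety is in (iv), where $m-c-1$ may be negative: here it is essential that $\binom{m-c-1}{x}$ is interpreted as the falling-factorial expression above, so that $(1-u)^{m-c-1}$ has the expected power-series expansion regardless of the sign of the exponent.
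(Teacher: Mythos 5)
Your proof is correct. Parts (i) and (ii) follow the paper's combinatorial strategy, with one small difference in (ii): the paper first reflects $\smbinom{n-x}{c}$ to $\smbinom{c+1}{n-x-1}$ and then counts $(n-1)$-multisubsets of a disjoint union of sets of sizes $c+1$ and $d+1$, whereas you classify the $(c+d+1)$-multisubsets of $\{0,\ldots,n-1\}$ directly by their $(d+1)$-th smallest element; both bijections are standard and equally short (for yours one should note, as is easily checked, that recombining a $d$-multisubset of $\{0,\ldots,x\}$ with $x$ and a $c$-multisubset of $\{x,\ldots,n-1\}$ always places $x$ in position $d+1$ of the sorted multiset, so the classification is genuinely bijective). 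For (iii) the paper instead rewrites $\binom{y}{d}$ as $\smbinom{y-d+1}{d}$ and reduces to an instance of (ii) by an index shift, and for (iv) it rewrites $\smbinom{x-y+1}{c}$ as $(-1)^{x-y}\binom{-c-1}{x-y}$ and invokes Vandermonde's convolution; your generating-function convolutions are the exact formal counterpart of these manipulations (Vandermonde's convolution is precisely the coefficient identity for a product of two binomial series), so the mathematical content is the same, though your packaging of (iii) and (iv) is more uniform and avoids the dependence of (iii) on (ii). Your closing caution about interpreting $\binom{m-c-1}{x}$ via the falling factorial when $m-c-1<0$ is exactly what the paper's convention $\binom{r}{d}=r(r-1)\cdots(r-d+1)/d!$ is designed to handle, so there is no gap there.
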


\begin{proof}
For (i) we count $(c+1)$-multisubsets of $\{0,\ldots, x\}$:
the $(c+1)$-multisubsets having $y$ as their greatest element
are in bijection with the
$c$-multisubsets of $\{0,\ldots, y\}$; these are counted by $\smbinom{y+1}{c}$. 
For (ii), use $\smbinom{n-x}{c} = \smbinom{c+1}{n-x-1}$
to rewrite the left-hand side as
$\sum_{x=0}^{n-1} \smbinom{c+1}{n-x-1}\smbinom{d+1}{x}$.
This counts the $(n-1)$-multisubsets of the disjoint union
of sets of size $c+1$ and $d+1$, and so is $\smbinom{c+d+2}{n-1} = \smbinom{n}{c+d+1}$.
For (iii), the non-zero summands occur for $y\ge d$.
We rewrite $\binom{y}{d}$ as $\smbinom{y-d+1}{d}$ and apply (ii)
to the right-hand side~of
\[ \sum_{y=d}^x \mbinom{x-y+1}{c}
\mbinom{y-d+1}{d} = \sum_{z=0}^{x-d} \mbinom{x-d+1-z}{c} \mbinom{z+1}{d} \]
to get $\smbinom{x-d+1}{c+d+1}$, which is $\binom{x+c+1}{c+d+1}$, as required.
Finally for (iv), we use $\smbinom{x-y+1}{c} = \binom{x-y+c}{c} = (-1)^{x-y}\binom{-c-1}{x-y}$
to rewrite the right-hand side as \smash{$(-1)^x \sum_{y=0}^x \binom{-c-1}{x-y} \binom{m}{y}$};
by Vandermonde's convolution this is \smash{$(-1)^x \binom{m-c-1}{x}$}, as required.
\end{proof}

\subsection{Stochastic properties of  $P(\protect\gammac{a}{b})$ and $P(\protect\deltac{a'}{b'})$}\label{subsec:gammaMatrices}
The function $\gammac{a}{b}$ factorizes 
as the product $\alpha^{(a)}\beta^{(b)}$ where
$\alpha^{(a)}$ is defined by \smash{$\alpha^{(a)}_y = \binom{y+a}{y}$} and 
 $\beta^{(b)}$ is  defined by
\smash{$\beta^{(b)}_{[y,x]} = \binom{x-y+b}{x-y}$}. When $a$, $b \in \Rg$, $\alpha^{(a)}$ is a strictly positive
atomic weight and $\beta^{(b)}$ is a strictly positive $\star$-symmetric weight.
The weight $\deltac{a'}{b'} = \binom{a'-1}{y}\binom{b'-1}{x-y}$ defined for $a', b' > 1$ has a similar factorization.
If $b' \not\in \N$ then the domain of $\deltac{a'}{b'}$ 
is $\min(\lceil a' \rceil, \lceil b' \rceil)$ and $\deltac{a'}{b'}$
is strictly positive. 
Otherwise $\deltac{a'}{b'}$ has the potentially larger domain $\lceil a' \rceil$
and \smash{$\deltac{a'}{b'}_{[y,x]} > 0$} if and only if $y-x < b'$. Since $b' \ge 2$,
the hypothesis of Theorem~\ref{thm:ergodic} holds. Therefore in all cases
the involutive walk is irreducible, recurrent and ergodic with a unique invariant distribution.

To find the invariant invariant distributions we use Theorem~\ref{thm:factorization}.
The following lemma is required.


\begin{lemma}\label{lemma:chainWeightsTotal}
Let $x$, $y \in \n$  with $y \le x$. For $a$, $b \in \R$ we have
\begin{thmlist}
\item $\alphac{a}_{[y,x]} = \smbinom{y+1}{a}$ and $\norm{\alphac{a}}_x = \smbinom{x+1}{a+1}$;
\item $\betac{b}_{[y,x]} = \smbinom{x-y+1}{b}$ and $\norm{\betac{b}}_x = \smbinom{x+1}{b+1}$;
\item $\gammac{a}{b}_{[y,x]} = \smbinom{y+1}{a} \smbinom{x-y+1}{b}$ and $\norm{\gammac{a}{b}}_x = 
\smbinom{x+1}{a+b+1}$.
\end{thmlist}
\end{lemma}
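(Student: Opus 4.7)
The plan is to verify the three formulas using the polynomial identities collected in Lemma~\ref{lemma:mbinom}, first for non-negative integer values of the parameters and then extending to all real values by a polynomial identity argument.

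For (i), the first equation is immediate from the definition of $\alpha^{(a)}$ as an atomic weight, since \smash{$\alphac{a}_{[y,x]} = \alpha^{(a)}_y = \binom{y+a}{y} = \binom{y+a}{a} = \smbinom{y+1}{a}$}. For the total, I write
\[ \norm{\alphac{a}}_x = \sum_{y=0}^x \mbinom{y+1}{a}. \]
When $a \in \N_0$, Lemma~\ref{lemma:mbinom}(i) gives the claimed value $\smbinom{x+1}{a+1}$. Since for fixed $y$ the function $a \mapsto \smbinom{y+1}{a} = \binom{y+a}{y}$ is a polynomial in $a$, and likewise $a \mapsto \smbinom{x+1}{a+1}$ is a polynomial in $a$, both sides of the equality are polynomials in $a$ of bounded degree, so equality on $\N_0$ forces equality on all of $\R$.

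For (ii), the first equation is $\betac{b}_{[y,x]} = \binom{x-y+b}{x-y} = \smbinom{x-y+1}{b}$, by the same rearrangement of a binomial coefficient. The sum $\norm{\betac{b}}_x = \sum_{y=0}^x \smbinom{x-y+1}{b}$ becomes $\sum_{z=0}^x \smbinom{z+1}{b}$ after the substitution $z = x-y$, and part (i) (applied with parameter $b$) identifies this with $\smbinom{x+1}{b+1}$.

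For (iii), the factorization $\gammac{a}{b} = \alphac{a}\betac{b}$ combined with parts (i) and (ii) yields the claimed product expression \smash{$\gammac{a}{b}_{[y,x]} = \smbinom{y+1}{a}\smbinom{x-y+1}{b}$}. For the total, I substitute into
\[ \norm{\gammac{a}{b}}_x = \sum_{y=0}^x \mbinom{y+1}{a}\mbinom{x-y+1}{b}. \]
For $a, b \in \N_0$, Lemma~\ref{lemma:mbinom}(ii) (reindexed, taking $n = x+1$ there and swapping the roles of $a$ and $b$) evaluates this to \smash{$\smbinom{x+1}{a+b+1}$}. As in (i), both sides are polynomial functions of $a$ and $b$ when $x$ is fixed, so the identity extends from $(a,b) \in \N_0^2$ to $(a,b) \in \R^2$.

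The only mild obstacle is to justify the polynomial extension carefully, but this is routine: a polynomial in two variables vanishing on $\N_0^2$ is identically zero.
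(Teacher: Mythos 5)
Your proposal is correct and follows essentially the same route as the paper: the first equalities are immediate from the definitions, the normalization factors are computed for $a,b \in \N_0$ via Lemma~\ref{lemma:mbinom}(i) and (ii), and the general case follows because both sides are polynomial in $a$ and $b$. The only difference is presentational — you spell out the reindexing and the two-variable polynomial extension slightly more explicitly than the paper does.
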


\begin{proof}
The first parts of (i), (ii) and (iii) are immediate from the definitions. Since 
$N(\alpha^{(a)})_x$, $N(\beta^{(b)})_x$ and $N(\gammac{a}{b}_x$ are polynomials in $a$ and $b$,
it suffices 
to prove the remaining results when $a$ and $b$ are integral. 
In this case the normalization factors $N(\alpha^{(a)})_x$ and $N(\beta^{(b)})_x$ 
follow immediately from Lemma~\ref{lemma:mbinom}(i).
For $\gammac{a}{b}$ we have
\[ N(\gammac{a}{b})_x = \sum_{y=0}^x \gammac{a}{b}_{[y,x]} 
= \sum_{y = 0}^x \mbinom{y+1}{a}\mbinom{x-y+1}{b} 
= \mbinom{x+1}{a+b+1} \]
by Lemma~\ref{lemma:mbinom}(ii).
\end{proof}

Thus $N(\gammac{a}{b})_x  = \binom{x+a+b+1}{x}$ and so $N(\gammac{a}{b})_x > 0$ for $x \in n$ if and only if 
$a+b+2$, \ldots, $a+b+n \not=0$, or equivalently,
\begin{equation}\label{eq:HgammaCondition}
a+b \not\in \{-2,\ldots, -n\}.
\end{equation}
We saw after~\eqref{eq:delta} in the introduction that \smash{$\deltac{a'}{b'}_{[y,x]} = (-1)^x \gammac{-a'}{-b'}_{[y,x]}$} for $a'$, $b' \in \R^{> 1}$.
Hence
$N(\deltac{a'}{b'})_x = (-1)^xN(\gammac{-a}{-b})$. Using that $\smbinom{m}{c} = \smbinom{c+1}{m-1}$,
it follows from Lemma~\ref{lemma:chainWeightsTotal}(iii) that
\begin{equation}\label{eq:normDelta} N(\deltac{a'}{b'})_x = \binom{(a'-1)+(b'-1)}{x}. \end{equation}
Hence as expected from~\eqref{eq:HgammaCondition}, $N(\deltac{a'}{b'})_x > 0$
whenever $x \in n$ and $n$ is in the domain of $\deltac{a'}{b'}$.

By Theorem~\ref{thm:factorization}, if $a$, $b \in \R^{>-1}$, the unique invariant
distribution for the $\gammac{a}{b}$-weighted involutive walk on $n$ 
is proportional to 
$\alphac{a}_{n-1-x} N(\gammac{a}{b})_x$. Therefore, by Lemma~\ref{lemma:chainWeightsTotal}, it is $\pi$ where
\begin{equation}\label{eq:gammaInvariant} \pi_x \propto \mbinom{n-x}{a} \mbinom{x+1}{a+b+1}. \end{equation}
The normalization factor for $\pi$ is $\smbinom{n}{2a+b+2}$ by Lemma~\ref{lemma:mbinom}(ii).
Similarly for $\deltac{a'}{b'}$, by~\eqref{eq:normDelta}, 
the unique invariant distribution on $n$, for $n$ in the domain of $\deltac{a'}{b'}$
is $\phi$, where
\begin{equation}\label{eq:deltaInvariant}
\phi_x \propto \binom{a'-1}{x} \binom{(a'-1)+(b'-1)}{x}. 
\end{equation}
Rewriting the first binomial coefficient
as \smash{$\binom{a'-1}{a'-1-x}$} and applying Vandermonde's convolution, we see that the normalization
factor for $\phi$ is \smash{$\binom{2a'+b'-3}{n-1}$}. (This may also be obtained by substituting $-a'$ for $a$ and $-b'$ for $b$
in the normalization factor for $\pi$.) 

\subsection{Spectrum of $P(\protect\gammac{a}{b})$ and $P(\protect\deltac{a'}{b'})$}\label{sec:gammaSpectrum}
To complete the proof of Theorem~\ref{thm:spectrum}, we must show that the eigenvalues of these transition
matrices are as claimed.
Again we deal with 
the two families in a uniform way using
that 
\begin{equation}\label{eq:HgammaDelta} H(\gammac{-a'}{-b'}) = H(\deltac{a'}{b'}).\end{equation}
Whenever~\eqref{eq:HgammaCondition} holds,
the eigenvalues of $H(\gammac{a}{b})$ are its diagonal entries $\gammac{a}{b}_{[x,x}]/N(\gammac{a}{b})_x$.
By  Lemma~\ref{lemma:chainWeightsTotal}(iii),
these are  $\lambda_0^{(a,b)}, \ldots, \lambda_{n-1}^{(a,b)}$ where
\begin{equation}\label{eq:chainEigenvalues} \lambdac{a}{b}{d} = \mbinom{d+1}{a} / \mbinom{d+1}{a+b+1}. \end{equation}
The following lemma, analogous to Lemma~\ref{lemma:gammaInfiniteEigenvectors}, reveals the eigenvectors and,
as in \S\ref{sec:gammaInfinite},
verifies the condition in Lemma~\ref{lemma:practical}(iii) for $H(\gammac{a}{b})$ to be a binomial transform.
Recall from~\eqref{eq:v} that $v(d) \in \R^n$ is the column vector with entries \hbox{$v(d)_x = \binom{x}{d}$}.

\begin{lemma}\label{lemma:Htransform}
Let $a$, $b \in \R$ and let $n \in \N$ be such that $a+b \not\in \{-2,\ldots, -n\}$. Then
\[  H(\gammac{a}{b}) v(d)  = \frac{\smbinom{d+1}{a}}{\smbinom{d+1}{a+b+1}} v(d). \]
\end{lemma}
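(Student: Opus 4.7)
The plan is to verify directly that $v(d)$ is an eigenvector of $H(\gammac{a}{b})$ with the claimed eigenvalue. Using the formulas for $\gammac{a}{b}_{[y,x]}$ and $N(\gammac{a}{b})_x$ supplied by Lemma~\ref{lemma:chainWeightsTotal}(iii), the $x$-entry of $H(\gammac{a}{b})v(d)$ equals
\[ \frac{1}{\smbinom{x+1}{a+b+1}}\sum_{y=0}^{x}\smbinom{y+1}{a}\smbinom{x-y+1}{b}\binom{y}{d}, \]
so the lemma reduces to the single-variable identity
\[ \sum_{y=0}^{x}\smbinom{y+1}{a}\smbinom{x-y+1}{b}\binom{y}{d} \;=\; \binom{x}{d}\,\smbinom{d+1}{a}\,\smbinom{x+1}{a+b+1}\big/\smbinom{d+1}{a+b+1}. \]
Multiplying through by $\smbinom{d+1}{a+b+1}$, both sides become polynomials in the indeterminates $a$ and $b$, so it suffices to establish the identity for $a, b \in \N_0$; the hypothesis $a+b \notin \{-2,\ldots,-n\}$ then legitimises dividing back at the end.

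The crucial computational step is the factorization
\[ \smbinom{y+1}{a}\binom{y}{d} \;=\; \smbinom{d+1}{a}\,\smbinom{y-d+1}{a+d}, \]
which is checked by rewriting both sides as $(y+a)!/\bigl(a!\,d!\,(y-d)!\bigr)$ when $y \ge d$ and noting both sides vanish when $y < d$. This pulls the anticipated eigenvalue factor $\smbinom{d+1}{a}$ outside the sum, and after the substitution $z = y-d$ the remaining sum
\[ \sum_{z=0}^{x-d}\smbinom{z+1}{a+d}\smbinom{x-d-z+1}{b} \]
is exactly of the form handled by Lemma~\ref{lemma:mbinom}(ii), evaluating to $\smbinom{x-d+1}{a+b+d+1}$.

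It then remains to verify the numerical identity $\smbinom{x-d+1}{a+b+d+1}\smbinom{d+1}{a+b+1} = \binom{x}{d}\smbinom{x+1}{a+b+1}$. Rewriting $\smbinom{x-d+1}{a+b+d+1}$ as $\binom{x+a+b+1}{x-d}$ and $\smbinom{x+1}{a+b+1}$ as $\binom{x+a+b+1}{x}$, this becomes the trinomial revision $\binom{n}{x}\binom{x}{d} = \binom{n}{d}\binom{n-d}{x-d}$ specialised to $n = x+a+b+1$. The main obstacle is essentially bookkeeping: the key insight is spotting the factorization of $\smbinom{y+1}{a}\binom{y}{d}$ (a disguised Vandermonde identity) that extracts the correct eigenvalue; once this is in hand the rest is a short chain of standard binomial manipulations, with the polynomial extension argument eliminating any worries about boundary effects for non-integer~$a$, $b$.
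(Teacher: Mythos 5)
Your proof is correct and follows essentially the same route as the paper's: the same reduction to a binomial identity, the same key factorization $\smbinom{y+1}{a}\binom{y}{d} = \smbinom{d+1}{a}\smbinom{y-d+1}{a+d}$ pulling out the eigenvalue, an appeal to Lemma~\ref{lemma:mbinom} (you invoke part (ii) after the shift $z=y-d$, which is exactly how part (iii), used in the paper, is derived), and the same polynomial-extension argument from $a,b\in\N_0$ to real parameters. No gaps.
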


\begin{proof}
We have
\begin{equation}\label{eq:Ha} \bigl( H(\gammac{a}{b}) v(d) \bigr)_x 
=\sum_{y=0}^x \mbinom{y+1}{a}\!\mbinom{x-y+1}{b}\!\binom{y}{d} / \!\mbinom{x+1}{a+b+1}\!. \end{equation}
When $a$, $b \in \mathbb{N}_0$, by $\binom{y+a}{a}\binom{y}{d} = \binom{a+d}{a}\binom{y+a}{a+d}$ 
 and an instance
of Lemma~\ref{lemma:mbinom}(iii), the numerator on the right-hand side is
\begin{align*}
\sum_{y=0}^x \binom{y+a}{a}\binom{y}{d} \mbinom{x-y+1}{b} &= \binom{a+d}{a}
\sum_{y=0}^x \binom{y+a}{a+d}\mbinom{x-y+1}{b} \\
& = \binom{a+d}{a}  \binom{x+a+b+1}{a+b+d+1}.
\end{align*}
We now use
$\binom{x+a+b+1}{a+b+d+1} / \smbinom{x+1}{a+b+1} = \binom{x+a+b+1}{a+b+d+1} / \binom{x+a+b+1}{a+b+1}
= \binom{x}{d} / \binom{a+b+d+1}{a+b+1}$
to find that the right-hand side of~\eqref{eq:Ha} is
\begin{align*} 
\binom{a+d}{a} \binom{x}{d} / \binom{a+b+d+1}{a+b+1} &= \binom{x}{d} \mbinom{d+1}{a} / \mbinom{d+1}{a+b+1} \\
&= v(d)_x \mbinom{d+1}{a} / \mbinom{d+1}{a+b+1}. \end{align*}
This proves the equality claimed in the lemma when $a$, $b \in \mathbb{N}_0$, and since,
by multiplying through by the non-zero quantities
$N(\gammac{a}{b})_x$ and $\smbinom{d+1}{a+b+1}$ each side becomes
polynomial in $a$ and $b$, it holds for all claimed~$a$ and $b$.
\end{proof}

Hence $H(\gammac{a}{b})$ is the binomial transform $H^{\lambda^{(a,b)}}$ where $\lambda^{(a,b)}_d$
is as defined in~\eqref{eq:chainEigenvalues}. By Proposition~\ref{prop:practical}(iii), $P(\gammac{a}{b})$
and $P(\deltac{a'}{b'})$ have the required eigenvalues.
This completes the proof of Theorem~\ref{thm:spectrum} for $\gammac{a}{b}$ and $\deltac{a'}{b'}$.

\subsubsection*{Second largest eigenvalue}
Since
\[ \frac{\lambdac{a}{b}{d}}{\lambdac{a}{b}{d+1}} = 
\frac{\binom{d+a-1}{d-1} / \binom{d+a+b}{d-1}}{\binom{d+a}{d} / \binom{d+a+b+1}{d}} 
= \frac{d}{d+a} \frac{d+a+b+1}{d} = \frac{a+b+1+d}{a+d} \]
the sequence of eigenvalues of $P(\gammac{a}{b})$ is decreasing in absolute value.
The same holds for $P(\deltac{a'}{b'})$ rewriting the right-hand side as $(a'+b'-d-1)/(a'-d)$.
The ratio above also implies that $\lambdac{a}{b}{d} \rightarrow 0$ as $d \rightarrow \infty$.
The rate of convergence of the involutive walk is controlled by the second
largest eigenvalue (in absolute value) of $P(\gammac{a}{b})$, namely $\lambdac{a}{b}{1} = (a+1)/(a+b+2)$;
for $P(\deltac{a}{b})$, the rewriting $(a'-1)/\bigl((a'-1)+(b'-1)\bigr)$ is most convenient.

\subsection{Eigenvectors of $P(\protect\gammac{a}{b})$}\label{subsec:eigenvectors}
We now explore the spectral behaviour of $P(\gammac{a}{b})$ when $a$, $b \in \Rg$ in more detail.
By a basic result on reversible transition matrices, 
the right-eigenvectors of the $n \times n$ matrix $P(\gammac{a}{b})$
are orthogonal with respect to the inner product on $\mathbb{R}^n$ 
defined by
\[ \bigl\langle v, w \bigr\rangle =
\sum_{x=0}^{n-1} \pi_x v_x w_x \] 
where $\pi_x = 
\smbinom{n-x}{a} \smbinom{x+1}{a+b+1} / \smbinom{n}{2a+b+2}$ is the invariant distribution
in~\eqref{eq:gammaInvariant}. 
For $0 \le d < n$, let $\rP{d} \in \mathbb{R}^n$ be the 
right-eigenvector of $P(\gammac{a}{b})$ with
eigenvalue $(-1)^d\lambdac{a}{b}{d}$, normalized with respect to $\langle \ , \ \rangle$.
By Proposition~\ref{prop:practical}(iii), $\rP{d} \in \langle v(0), \ldots, v(d) \rangle$ for $0 \le d < n$.
Therefore $\rP{0}, \ldots, \rP{n-1}$ are the Gram--Schmidt orthonormalizations of $v(0)$, \ldots, $v(n-1)$,
with respect to $\langle \ , \ \rangle$. 
Since the eigenvalues
$(-1)^d\lambdac{a}{b}{d}$ of $P(\gammac{a}{b})$ are distinct, we also have
\[ \rP{e} \in \bigl\langle \prod_{d=0}^{e-1} \bigl( P(\gammac{a}{b}) - (-1)^d\lambdac{a}{b}{d} \bigr) v(e) \bigr\rangle \]
for $0 \le e < n$. 
Using either of these observations the right-eigenvectors can be computed
for any particular $n$. In particular, a routine calculation shows that $\rP{1}_x \propto (a+b+2)(n-1)
-(2a+b+3)x$.

The left-eigenvectors can be obtained using the following
lemma; we include a proof for completeness and to motivate Definition~\ref{defn:H} below.

\begin{lemma}\label{lemma:leftRight}
Let $P$ be the transition matrix of a finite reversible Markov chain with
strictly positive invariant distribution proportional to $\pi$. 
Let $v \in \mathbb{R}^n$ and let $\uw \in \mathbb{R}^n$ be defined by $\uw_x = \pi_x v_x$.
Then $v$ is a right-eigenvector for $P$
with eigenvalue $\lambda$ if and only if $\uw$ is a left-eigenvector for $P$ with eigenvalue $\lambda$.
\end{lemma}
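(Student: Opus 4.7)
The plan is to prove both implications simultaneously by establishing the single identity $(\uw P)_y = \pi_y (Pv)_y$ for all $y$, valid under the assumption $\uw_x = \pi_x v_x$. Once this identity is in hand, the conclusion of the lemma is immediate: if $Pv = \lambda v$ then $(\uw P)_y = \pi_y \lambda v_y = \lambda \uw_y$, and conversely if $\uw P = \lambda \uw$ then $\pi_y (Pv)_y = \lambda \uw_y = \lambda \pi_y v_y$, which gives $(Pv)_y = \lambda v_y$ after dividing by the strictly positive $\pi_y$.

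To establish the identity, I would start from the detailed balance equations $\pi_x P_{xy} = \pi_y P_{yx}$ that express reversibility of $P$. Compute directly
\[ (\uw P)_y = \sum_{x} \uw_x P_{xy} = \sum_{x} \pi_x v_x P_{xy} = \sum_{x} v_x \pi_y P_{yx} = \pi_y \sum_{x} P_{yx} v_x = \pi_y (Pv)_y, \]
where the third equality is the detailed balance condition.

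The main point is really just bookkeeping; there is no obstacle beyond the application of reversibility, and the strict positivity hypothesis on $\pi$ is used only in the final division to recover $Pv = \lambda v$ from the eigenvector equation for $\uw$. I would present the proof in a single short paragraph, noting that reversibility is used precisely once and that strict positivity is what makes the correspondence $v \leftrightarrow \uw$ a bijection between the two eigenspaces.
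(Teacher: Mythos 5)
Your proposal is correct and follows essentially the same route as the paper: both establish the identity $(\uw P)_y = \pi_y (Pv)_y$ via a single application of detailed balance and then read off both implications, using strict positivity of $\pi$ only for the converse division. Nothing is missing.
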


\begin{proof}
We have
\[ (\uw P)_y = \sum_{x} \uw_x P_{xy} = \sum_{x} v_x \pi_x P_{xy} = \sum_{x} v_x\pi_y P_{yx} = \pi_y \sum_{x} v_x P_{yx} \]
where the penultimate equality uses the detailed balance equation
$\pi_x P_{xy} = \pi_y P_{yx}$. Now $v$ is a right-eigenvector for $P$ with
eigenvalue $\lambda$ if and only if the right-hand side is $\lambda \pi_y v_y$ for each $y$.
Since $\uw_y = \pi_y v_y$ and $\pi_y > 0$, this holds if and only if $\uw$ is a left-eigenvector for $P$
with eigenvalue $\lambda$.
\end{proof}

Remarkably, the left-eigenspace of $P(\gammac{a}{b})$ for the 
final eigenvalue, namely $\lambdac{a}{b}{n-1} = (-1)^{n-1} \smbinom{n}{a} / \smbinom{n}{a+b+1}$ 
does not depend on either $a$ or $b$; up to signs, it is
a row of Pascal's Triangle.

\begin{proposition}\label{prop:finalLeftEigenvector}
Let $u_x = (-1)^x \binom{n-1}{x}$. Then for all $a$, $b \in \Rg$ we have 
$u P(\gammac{a}{b}) = (-1)^{n-1} \smbinom{n}{a} / \smbinom{n}{a+b+1} u$.
\end{proposition}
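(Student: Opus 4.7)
The plan is to deduce the result from the binomial transform structure established in Lemma~\ref{lemma:Htransform} together with the explicit formula for $B(n)^{-1}$ in Lemma~\ref{lemma:binomialNew}(i).

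First I would observe that by Lemma~\ref{lemma:Htransform} and Lemma~\ref{lemma:practical}, $H(\gammac{a}{b}) = B(n)\,\Diag(\lambda^{(a,b)}_0,\ldots,\lambda^{(a,b)}_{n-1})\, B(n)^{-1}$. Multiplying on the left by $B(n)^{-1}$ shows that the rows of $B(n)^{-1}$ are left-eigenvectors of $H(\gammac{a}{b})$, with the $z$-th row having eigenvalue $\lambda^{(a,b)}_z = \smbinom{z+1}{a}/\smbinom{z+1}{a+b+1}$.

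By Lemma~\ref{lemma:binomialNew}(i), the $(n-1)$-st row of $B(n)^{-1}$ has entries $(-1)^{n-1+w}\binom{n-1}{w} = (-1)^{n-1} u_w$. Hence $u$ itself is a left-eigenvector of $H(\gammac{a}{b})$ with eigenvalue $\lambda^{(a,b)}_{n-1} = \smbinom{n}{a}/\smbinom{n}{a+b+1}$; that is, $uH(\gammac{a}{b}) = \lambda^{(a,b)}_{n-1} u$.

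Finally, using $P(\gammac{a}{b}) = H(\gammac{a}{b}) J(n)$, I compute $(uJ(n))_y = u_{y^\star} = (-1)^{n-1-y}\binom{n-1}{y} = (-1)^{n-1} u_y$, so $uJ(n) = (-1)^{n-1} u$. Combining these gives
\[ u P(\gammac{a}{b}) = \bigl(uH(\gammac{a}{b})\bigr) J(n) = \lambda^{(a,b)}_{n-1}\, uJ(n) = (-1)^{n-1} \frac{\smbinom{n}{a}}{\smbinom{n}{a+b+1}}\, u, \]
as required. There is no real obstacle: once one recognises $u$ (up to sign) as the last row of $B(n)^{-1}$, everything reduces to the factorisation of $H(\gammac{a}{b})$ and the elementary identity $uJ(n) = (-1)^{n-1}u$. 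The mild point to check is simply the sign bookkeeping in identifying the $(n-1)$-st row of $B(n)^{-1}$ with $(-1)^{n-1}u$.
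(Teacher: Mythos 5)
Your proof is correct, and it takes a genuinely different route from the paper's. The paper verifies the eigenvector equation head-on: it reduces $uP(\gammac{a}{b}) = (-1)^{n-1}\smbinom{n}{a}/\smbinom{n}{a+b+1}\,u$ to an explicit binomial sum and evaluates it by rewriting $\binom{n-1}{x}/\smbinom{x+1}{a+b+1}$, changing variables, and invoking the alternating Vandermonde identity of Lemma~\ref{lemma:mbinom}(iv). You instead exploit the structural fact, already established in Lemma~\ref{lemma:Htransform} together with Lemma~\ref{lemma:practical}, that $H(\gammac{a}{b}) = B(n)\Diag(\lambda^{(a,b)}_0,\ldots,\lambda^{(a,b)}_{n-1})B(n)^{-1}$, so that the rows of $B(n)^{-1}$ are left-eigenvectors of $H(\gammac{a}{b})$; by Lemma~\ref{lemma:binomialNew}(i) the bottom row is exactly $(-1)^{n-1}u$, and the elementary identity $uJ(n) = (-1)^{n-1}u$ then finishes the argument via $P(\gammac{a}{b}) = H(\gammac{a}{b})J(n)$. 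Your argument is shorter, and it makes transparent the feature the paper flags as ``remarkable'': the final left-eigenspace is independent of $a$ and $b$ because $(-1)^{n-1}u$ is the last row of $B(n)^{-1}$ and hence a left-eigenvector of \emph{every} binomial transform $H^\lambda$ (with eigenvalue $\lambda_{n-1}$), so in fact you prove the stronger statement $uP^\lambda = (-1)^{n-1}\lambda_{n-1}u$ for arbitrary $\lambda$. All the hypotheses check out: for $a, b \in \Rg$ one has $a+b > -2$, so Lemma~\ref{lemma:Htransform} applies. What the paper's computation buys in exchange is a self-contained verification in terms of the multiset binomial identities of \S\ref{subsec:multisubsets}, without leaning on the machinery of \S\ref{sec:anti-diagonal}; but as a matter of logic your route is cleaner and, since Lemma~\ref{lemma:Htransform} is proved before the proposition in any case, involves no circularity.
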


\begin{proof}
By Lemma~\ref{lemma:chainWeightsTotal}(iv),
$P(\gammac{a}{b} )_{xy^\star} = H(\gammac{a}{b})_{xy}$. Hence 
$\bigl(uP(\gammac{a}{b})\bigr)_{y^\star} = (-1)^{n-1}\smbinom{n}{a} / \smbinom{n}{a+b+1} u_{y^\star}$ if and only if
\begin{equation}\label{eq:finalLeftEigenvectorCalculation} 
\sum_{x=y}^{n-1} \frac{ \smbinom{y+1}{a}\smbinom{x-y+1}{b}}{\smbinom{x+1}{a+b+1}} (-1)^x \binom{n-1}{x}
=  \frac{\smbinom{n}{a}}{\smbinom{n}{a+b+1}} (-1)^{y^\star} \binom{n-1}{y^\star}.\end{equation}
Observe that when $a$, $b \in \N$, we have
$\binom{n-1}{x} / \smbinom{x+1}{a+b+1} = \binom{n-1}{x} / \binom{x+a+b+1}{a+b+1} 
= \binom{n+a+b}{x+a+b+1} / \binom{n+a+b}{a+b+1} = \binom{n+a+b}{n-1-x} / \smbinom{n}{a+b+1}$.
Clearing denominators this becomes a polynomial identity, so it holds for all $a$, $b \in \Rg$.
Setting $w = n-1-y$, we use this to rewrite the left-hand side as
\[ \frac{\smbinom{y+1}{a}}{\smbinom{n}{a+b+1}} 
\sum_{w=0}^{n-1-y} \mbinom{n-w-y}{b} \binom{n+a+b}{w} (-1)^{n-1-w}. \]
By an instance of
Lemma~\ref{lemma:mbinom}(iv), this is $\smbinom{y+1}{a} (-1)^{n-1-y} \binom{n+a-1}{n-1-y}/
\smbinom{n}{a+b+1}$. Now observe that
\[ \mbinom{y+1}{a} \binom{n+a-1}{n-1-y} = \binom{y+a}{a} \binom{n-1+a}{y+a}
= \mbinom{n}{a} \binom{n-1}{y} \]
which since $\binom{n-1}{y} = \binom{n-1}{y^\star}$ gives the relevant factors in the right-hand side of~\eqref{eq:finalLeftEigenvectorCalculation}.
\end{proof}

The corollary for the right-eigenvectors of $P(\gammac{a}{b})$ is worth noting.

\begin{corollary}
Let $a$, $b \in \Rg$. Then 
\[ \rP{n-1}_x \propto (-1)^x \binom{n-1}{x} \bigl/ \mbinom{n-x}{a} \mbinom{x+1}{a+b+1}. \]
\end{corollary}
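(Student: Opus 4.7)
The plan is to combine Proposition~\ref{prop:finalLeftEigenvector} with Lemma~\ref{lemma:leftRight} in essentially one step. Proposition~\ref{prop:finalLeftEigenvector} establishes that $u$, defined by $u_x = (-1)^x \binom{n-1}{x}$, is a left-eigenvector of $P(\gammac{a}{b})$ with eigenvalue $(-1)^{n-1} \smbinom{n}{a} / \smbinom{n}{a+b+1}$, which is precisely $(-1)^{n-1}\lambdac{a}{b}{n-1}$ by the formula~\eqref{eq:chainEigenvalues}.

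By Lemma~\ref{lemma:leftRight}, applied to the transition matrix $P(\gammac{a}{b})$ with its strictly positive invariant distribution $\pi$ from~\eqref{eq:gammaInvariant}, the relation $u_x = \pi_x v_x$ converts left-eigenvectors to right-eigenvectors with the same eigenvalue. Thus $v_x = u_x/\pi_x$ is a right-eigenvector of $P(\gammac{a}{b})$ for the eigenvalue $(-1)^{n-1}\lambdac{a}{b}{n-1}$. Since this eigenvalue is simple (the eigenvalues are distinct by the strict monotonicity observed after~\eqref{eq:chainEigenvalues}), we have $\rP{n-1} \propto v$, and substituting $\pi_x \propto \smbinom{n-x}{a}\smbinom{x+1}{a+b+1}$ yields the stated formula.

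There is no real obstacle here; the work was already done in proving Proposition~\ref{prop:finalLeftEigenvector}. The only minor point to check is that the normalization constants implicit in $\pi$ and in the eigenvalue of $u$ are consistent, but since the corollary is stated only up to proportionality, this is automatic.
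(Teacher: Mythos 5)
Your proposal is correct and follows exactly the paper's argument: the corollary is deduced from Proposition~\ref{prop:finalLeftEigenvector} via Lemma~\ref{lemma:leftRight} and the invariant distribution in~\eqref{eq:gammaInvariant}. Your extra remark that the eigenvalue is simple (so the right-eigenvector is determined up to scalar) is a small but worthwhile point of care that the paper leaves implicit.
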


\begin{proof}
This is immediate from Lemma~\ref{lemma:leftRight} and the invariant distribution in~\eqref{eq:gammaInvariant}.
\end{proof}

In particular, if $a=0$ then from
\[ \binom{n+b}{x+b+1} \mbinom{x+1}{b+1} = \binom{n+b}{x+b+1} \binom{x+b+1}{b+1} 
= \binom{n+b}{b+1} \binom{n-1}{y} \]
we see that \smash{$\rP{n-1}_x \propto (-1)^x \binom{n+b}{x+b+1}$}. A slightly
more lengthy argument shows that if $a=1$ then
\smash{$\rP{n-1}_x \propto (-1)^{x}\binom{n+b+2}{x+b+2}$}.
However there do not appear to be such `division-free' formulae for higher $a$, or general $a \in \Rg$.
We return to these eigenvectors in \S\ref{subsec:discreteContinuous}.

\subsection{The involutive walk on subsets and proof of Corollary~\ref{cor:subsets}}
Let $0 < p < 1$. The $\gammac{-(1-p)}{-p}$-weighted involutive walk on $\{0,1\}$ has transition matrix
\[ Q = \left( \begin{matrix} \cdot & 1 \\ p & 1-p \end{matrix} \right). \]
By special cases of Theorem~\ref{thm:factorization} and
Theorem~\ref{thm:spectrum} the  invariant distribution is $\left( \mfrac{p}{1+p}, \mfrac{1}{1+p}
\right)$ and the eigenvalues are $1$ and $-p$. 
Observe that $Q$ is the transition matrix of the random walk in Corollary~\ref{cor:subsets} when $m=1$;
the empty set corresponds to
$0 \in \{0,1\}$ and $\{1\}$ to $1 \in \{0,1\}$.

Generally, if $P$ is the matrix of an involutive walk on $n$ and $P'$ is the matrix of an involutive walk
on $n'$ then, indexing the entries of $P \otimes P'$ by the ordinal product
$n \times n'$, we have $(P \otimes P')_{(x,x'),(z,z')} =
P_{(x,z)}P_{(x',z')}$, and so $P \otimes P'$ is the transition matrix of the random walk on $n \times n'$ in which,
starting from $(x, x')$, we choose $(y,y')$ with $y \in [x]$, $y' \in [x']$ with probability $P_{(x,y^\star)}P_{(x',{y'}^\star)}$
and then step to $(y^\star, {y'}^\star)$. It is easily seen that if the random walks on $n$ and on $n'$ 
are irreducible, reversible, recurrent and ergodic then so is the random walk on $n \times n'$.
In particular, the involutive walk on subsets
of $\{1,\ldots, m\}$ in Corollary~\ref{cor:subsets} has all these properties.
Its transition matrix $Q^{\otimes m}$ and so
its unique invariant distribution $\pi$ is \smash{$\left( \mfrac{p}{1+p}, \mfrac{1}{1+p}
\right)^{\raisebox{-4pt}{$\scriptstyle \otimes m$}}$}; by the identification at the end of the previous paragraph we have $\pi_X =
\bigl( \mfrac{p}{1+p} \bigr)^{m-|X|} \bigl( \mfrac{1}{1+p} \bigr)^{|X|}$ for each $X \subseteq \{1,\ldots, m\}$.
Therefore $\pi_X = p^{m-|X|}/(1+p)^m$ as claimed. 
It is easily seen, either directly or by Proposition~\ref{prop:finalLeftEigenvector},
that the left-eigenvector of~$Q$ with eigenvalue $-p$ is $(1,-1)$. Therefore 
the $(-p)^e$-eigenspace is spanned by all $m$-fold tensor products of $(p,1)$ and $(1,-1)$ and has
dimension $\binom{m}{e}$.
This completes the proof of Corollary~\ref{cor:subsets}.

\section{When is a binomial transform stochastic?}
\label{sec:stochastic}
Given a sequence $\lambda_0$, \ldots, $\lambda_{n-1}$ of real numbers, recall from~\eqref{eq:HL}
that $\DSL$ denotes the binomial transform 
$B(n)\Diag(\lambda_0,\ldots,\lambda_{n-1})B(n)^{-1}$. By Theorem~\ref{thm:GB},
an $n \times n$ matrix has global anti-diagonal eigenbasis action if and only if it is equal to 
some $\DSL$, and, by Theorem~\ref{thm:OchiaiEtAl}, this property also characterises generic
matrices with the global anti-diagonal eigenvalue property.
We have defined
$\PL = \DSL J(n)$ to be the anti-triangular matrix obtained from~$H^\lambda$.
In this section we determine when $\PL$ is stochastic, and, supposing that $\PL$ is stochastic,
when the involutive walk it defines is ergodic.

The introduction discusses the connection with the results
on half-infinite matrices in \cite{OchiaiEtAl}. Our approach, using the following two lemmas, is more elementary.

\begin{lemma}\label{lemma:binomialTransformRecurrence}
Let $\lambda_0, \ldots, \lambda_{n-1} \in \R$ and let $x, y \in \n$ with $x < n-1$.
If $x \ge y$ then
\[ \DSL_{x+1,y}\binom{x+1}{y}^{-1}\!\! = \DSL_{xy}\binom{x}{y}^{-1}\!\! - \DSL_{x+1,y+1}\binom{x+1}{y+1}^{-1}\!\!.\]
\end{lemma}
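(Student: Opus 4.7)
The plan is to reduce the identity to a standard binomial manipulation by invoking the explicit formula for $\DSL_{xy}$ recorded in Lemma~\ref{lemma:practical}(iv). That lemma gives
\[ \DSL_{xy} = \binom{x}{y}\sum_{e=0}^{x-y}(-1)^e\binom{x-y}{e}\lambda_{y+e}, \]
so it is natural to work with the normalised quantity
\[ f(x,y) := \DSL_{xy}\binom{x}{y}^{-1} = \sum_{e=0}^{x-y}(-1)^e\binom{x-y}{e}\lambda_{y+e}. \]
In this notation the claim to be proved becomes simply $f(x+1,y) = f(x,y) - f(x+1,y+1)$.

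Next I would establish this identity by applying Pascal's rule to the binomial coefficient $\binom{x+1-y}{e} = \binom{x-y}{e} + \binom{x-y}{e-1}$ inside $f(x+1,y)$. The first piece contributes $\sum_{e=0}^{x-y}(-1)^e\binom{x-y}{e}\lambda_{y+e}$, which is exactly $f(x,y)$ (the would-be term at $e = x+1-y$ vanishes because $\binom{x-y}{x+1-y} = 0$). The second piece starts at $e = 1$, and after the reindexing $e' = e-1$ it becomes $-\sum_{e'=0}^{x-y}(-1)^{e'}\binom{x-y}{e'}\lambda_{y+1+e'} = -f(x+1,y+1)$. Combining the two pieces yields the desired equality.

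There is essentially no obstacle here beyond bookkeeping: one must check that the boundary indices behave correctly and that the reindexing of the second sum absorbs the extra factor of $-1$. The whole argument is a two-line calculation, and the recurrence should be presented as a convenient computational tool for the subsequent results in Section~\ref{sec:stochastic} rather than as a substantive theorem.
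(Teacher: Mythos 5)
Your proof is correct and follows essentially the same route as the paper's: both reduce to the explicit formula of Lemma~\ref{lemma:practical}(iv) for $\DSL_{xy}\binom{x}{y}^{-1}$ and then apply Pascal's rule $\binom{x-y}{e}+\binom{x-y}{e-1}=\binom{x+1-y}{e}$, the paper by comparing coefficients of each $\lambda_{y+e}$ and you by splitting and reindexing the sum, which is the same calculation in a different presentation.
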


\begin{proof}
Let $k = x-y \in \N_0$. By Lemma~\ref{lemma:practical}(iv),
the coefficient of $\lambda_{y+e}$ in $\DSL_{xy}\binom{x}{y}^{-1}$, $\DSL_{x+1,y+1}\binom{x+1}{k}^{-1}$ 
and $\DSL_{x+1,y}\binom{x+1}{y}^{-1}$ is
$(-1)^e \binom{k}{e}$, $(-1)^{e-1} \binom{k}{e-1}$ and $(-1)^e \binom{k+1}{e}$ respectively,
for $e \in \{1,\ldots,n-1-y\}$; for $e=0$ the coefficient is $1$, $0$ and $1$, respectively.
The lemma therefore follows from the basic identity $\binom{k}{e} + \binom{k}{e-1} = \binom{k+1}{e}$
for $e\in\{1,\ldots, k\}$.
\end{proof}

\begin{lemma}\label{lemma:nonNegative}
Let $\lambda_0,  \ldots, \lambda_{n-1} \in \R$. Then
\begin{thmlist}
\item $\DSL$ is non-negative if and only if its bottom row is non-negative;
\item if $\DSL$ is non-negative and $\DSL_{xy} = 0$ then $\DSL_{x'y'} = 0$ for all $x'$, $y' \in n$ with
$x'-x \ge y'-y$.
\item $\DSL_{x+1,x} = \binom{x+1}{x}(\lambda_x - \lambda_{x+1})$ for all $x \in \{0,1,\ldots, n-2\}$;
\item if $\DSL$ is non-negative and $\lambda_t = \lambda_{t+1}$ then $\lambda_t = \ldots = \lambda_{n-1}$.
\end{thmlist}
\end{lemma}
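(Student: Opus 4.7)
The plan is to prove the four parts in the order (iii), (i), (ii), (iv), since each part feeds into the next. For (iii), I will specialise the closed formula in Lemma~\ref{lemma:practical}(iv) to the first sub-diagonal entry: with row index $x+1$ and column index $x$ the sum $\sum_{e=0}^{1}(-1)^e\binom{1}{e}\lambda_{x+e}$ collapses to $\lambda_x-\lambda_{x+1}$, yielding the claim immediately after multiplying by $\binom{x+1}{x}$. This short calculation does nothing by itself, but it is what will let (iv) read a zero sub-diagonal entry as an equation between consecutive $\lambda$'s.

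The key engine for (i) and (ii) is the rearranged form of Lemma~\ref{lemma:binomialTransformRecurrence},
\[ \frac{H^\lambda_{xy}}{\binom{x}{y}} = \frac{H^\lambda_{x+1,y}}{\binom{x+1}{y}} + \frac{H^\lambda_{x+1,y+1}}{\binom{x+1}{y+1}}, \]
which exhibits every sub-diagonal entry in row $x$ as a non-negative combination of two entries in row $x+1$. For (i) the `only if' direction is trivial; for the converse I will induct on $n-1-x$, propagating non-negativity upward from the bottom row by means of the displayed identity. For (ii), the same identity shows that if the left-hand side vanishes and the two right-hand summands are known to be non-negative, then both summands must vanish. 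Iterating this step gives $H^\lambda_{x+k,y+j}=0$ for all integers $0\le j\le k$ with $x+k$, $y+j\in n$. Setting $k=x'-x$ and $j=y'-y$ (which imposes $x'\ge x$, $y'\ge y$ and the required inequality $x'-x\ge y'-y$ on the sub-diagonal cone) delivers the statement.

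Finally for (iv), if $\lambda_t=\lambda_{t+1}$ then (iii) forces $H^\lambda_{t+1,t}=0$. Applying (ii) at this point propagates the zero one step to $H^\lambda_{t+2,t+1}=0$, and reading (iii) backwards gives $\lambda_{t+1}=\lambda_{t+2}$. Iterating (or a formal induction on the largest $s$ with $\lambda_t=\cdots=\lambda_s$) completes the proof. There is no real obstacle anywhere: the essential observation is that once Lemma~\ref{lemma:binomialTransformRecurrence} is rewritten so that all three coefficients are positive, the non-negative cone and the zero locus below a vanishing entry are both stable under the recurrence, and the rest is bookkeeping combined with the sub-diagonal identity from (iii).
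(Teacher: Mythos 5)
Your proof is correct and is essentially the paper's argument: both rest on rearranging Lemma~\ref{lemma:binomialTransformRecurrence} so that each entry $H^\lambda_{xy}\binom{x}{y}^{-1}$ is the sum of the two entries $H^\lambda_{x+1,y}\binom{x+1}{y}^{-1}$ and $H^\lambda_{x+1,y+1}\binom{x+1}{y+1}^{-1}$ below it, then propagating non-negativity upward (equivalently, negativity downward) for (i), zeros downward for (ii), and combining (ii) with the sub-diagonal formula for (iv). The only cosmetic difference is that the paper packages the rescaled entries as an auxiliary matrix $F^\lambda$ and obtains (iii) from the recurrence rather than directly from Lemma~\ref{lemma:practical}(iv).
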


\begin{proof}
As a technical tool,
we define $F^\lambda$ to be the lower-triangular 
$n \times n$ matrix with entries $F^\lambda_{xy} = \DSL_{xy}\binom{x}{y}^{-1}$ for $x \ge y$ and $F^\lambda_{xy} = 0$ for $x < y$.
By Lemma~\ref{lemma:binomialTransformRecurrence}, 
the entries of $F$ satisfy the recurrence
\[ F^\lambda_{x+1,y} = F^\lambda_{xy} - F^\lambda_{x+1,y+1}. \]
Hence if
$F^\lambda_{xy} < 0$ then either $F^\lambda_{x+1,y} < 0$ or $F^\lambda_{x+1,y+1} < 0$. Thus
a negative entry in row $x$ of $F^\lambda$ implies a negative entry in row $x+1$ of $F^\lambda$.
It follows that~$F^\lambda$ is non-negative if and only if its bottom row is non-negative, and the same holds for $\DSL$.
This proves~(i). Suppose that $\DSL$ is non-negative and that $\DSL_{xy} = 0$. Then $F^\lambda_{xy} = 0$
and by the recurrence relation, $F^\lambda_{x+1,y} = F^\lambda_{x+1,y+1} = 0$. It follows
that $H^\lambda_{x'y'} = 0$ for all $x' \ge x$ and $y' \ge y$ with $x' - x \ge y' - y$, proving (ii). 
Part (iii) also follows from the recurrence, using that $F^\lambda_{xx} = \lambda_x$.
If $\lambda_t = \lambda_{t+1}$ then, by (iii),~$F^\lambda_{t+1,t} = 0$, and so by (ii),
$F^\lambda_{x+1,x} = 0$ for all $x \ge t$. Hence, by another use of (iii), 
$\lambda_x = \lambda_{x+1}$ for all $x \ge t$, as required.
\end{proof}

\setcounter{theorem}{5}
\setcounter{section}{1}
\begin{corollary}\corollaryBinomialTransform
\end{corollary}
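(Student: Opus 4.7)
My plan is to combine three ingredients already established in the paper: the explicit formula for the entries of $\DSL$ in Lemma~\ref{lemma:practical}(iv), the fact that $v(0)$ is an eigenvector with eigenvalue $\lambda_0$ from Lemma~\ref{lemma:practical}(iii), and the propagation/non-negativity results for binomial transforms in Lemma~\ref{lemma:nonNegative}. The key observation making everything go through is the simple identity $\PL_{xz} = (\DSL J(n))_{xz} = \DSL_{x,z^\star}$, so results about $\DSL$ transfer directly to $\PL$ via the anti-involution~$\star$.

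First I would handle the row-sum condition. Since the all-ones vector is $v(0)$, Lemma~\ref{lemma:practical}(iii) gives $\DSL v(0) = \lambda_0 v(0)$, so every row of $\DSL$ sums to $\lambda_0$. Multiplying on the right by $J(n)$ merely permutes the columns, so each row of $\PL$ also sums to $\lambda_0$; hence the stochasticity of $\PL$ forces $\lambda_0 = 1$. Next, $\PL$ is non-negative if and only if $\DSL$ is, and by Lemma~\ref{lemma:nonNegative}(i) this is equivalent to non-negativity of the bottom row of $\DSL$. Applying Lemma~\ref{lemma:practical}(iv), the entry $\DSL_{n-1,y}$ equals $\binom{n-1}{y}$ times $\sum_{e=0}^{n-1-y}(-1)^e \binom{n-1-y}{e}\lambda_{y+e}$; reindexing with $z = n-1-y = y^\star$ turns this into precisely the displayed condition $\sum_{e=0}^z (-1)^e \binom{z}{e}\lambda_{z^\star+e} \ge 0$ for $z \in n$. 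This proves the equivalence.

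For the ``moreover'' clause, assume strict inequality for every $z \in n$, so $\DSL_{n-1,y} > 0$ for all $y$. The support claim then comes from Lemma~\ref{lemma:nonNegative}(ii) applied contrapositively: if $\PL_{xz} = \DSL_{x,z^\star} = 0$ with $x \ge z^\star$ (equivalently $x+z \ge n-1$), then taking $x' = n-1$, $y' = z^\star$ in Lemma~\ref{lemma:nonNegative}(ii) would force $\DSL_{n-1,z^\star} = 0$, a contradiction; the opposite direction $\PL_{xz} = 0$ for $x+z < n-1$ is just anti-triangularity of $\DSL J(n)$. To get the strict monotonicity $1 > \lambda_1 > \cdots > \lambda_{n-1}$, note first that $\DSL_{t+1,t} \ge 0$ combined with Lemma~\ref{lemma:nonNegative}(iii) gives $\lambda_t \ge \lambda_{t+1}$. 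If $\lambda_t = \lambda_{t+1}$ for some $t \le n-2$, Lemma~\ref{lemma:nonNegative}(iv) forces $\lambda_t = \lambda_{t+1} = \cdots = \lambda_{n-1}$, and then the strict inequality at $z = n-1-t$ becomes
\[ \lambda_t \sum_{e=0}^{n-1-t} (-1)^e \binom{n-1-t}{e} > 0, \]
which is impossible because the binomial sum vanishes whenever $n-1-t \ge 1$.

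The only step that needs any real care is the support claim, since one must be careful to pick $(x',y')$ in Lemma~\ref{lemma:nonNegative}(ii) satisfying both $x' \ge x$ and $y' \ge z^\star$ together with $x'-x \ge y'-z^\star$; the choice $(x',y') = (n-1, z^\star)$ works uniformly and reduces everything to positivity of the bottom row. All the other steps are direct translations between $\PL$, $\DSL$ and the diagonal data $(\lambda_0,\ldots,\lambda_{n-1})$.
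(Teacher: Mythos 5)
Your proposal is correct and follows essentially the same route as the paper: the row-sum condition via $\DSL v(0)=\lambda_0 v(0)$, the non-negativity criterion via Lemma~\ref{lemma:nonNegative}(i) applied to the bottom row computed from Lemma~\ref{lemma:practical}(iv), and the ``moreover'' clause via parts (ii)--(iv) of Lemma~\ref{lemma:nonNegative}. Your write-up is somewhat more explicit than the paper's in deducing the strict monotonicity (via the vanishing alternating binomial sum) and the support claim, but no new ideas are involved.
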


\setcounter{theorem}{2}
\setcounter{section}{6}

\begin{proof}
By Lemma~\ref{lemma:practical}(iv), $H^\lambda_{n-1,y} = \sum_{e=0}^{y} (-1)^e \binom{n-1-y}{e} \lambda_{z^\star + e}$.
Since this is the entry in column $y^\star$ of the bottom row of $\PL$, it follows from
Lemma~\ref{lemma:nonNegative}(i) that $\PL$ is non-negative if and only if
$\sum_{e=0}^{z^\star} (-1)^e \binom{z}{e} \lambda_{z^\star + e} \ge 0$ for all \hbox{$z \in n$}.
Moreover, by Lemma~\ref{lemma:nonNegative}(iv), if strict inequality
holds for all $z \in n$ then $\lambda_0 > \lambda_1 > \ldots > \lambda_{n-1}$ and, by
Lemma~\ref{lemma:nonNegative}(ii),
$\PL$ has strictly positive entries in all positions below its
anti-diagonal.
Recall that $v(0) \in \mathbb{R}^n$ is the all-ones vector.
By Lemma~\ref{lemma:practical}(ii), $\DSL v(0) = \lambda_0 v(0)$. Therefore~$\DSL$, and hence $\PL$, 
has all row sums equal to $1$ if and only if $\lambda_0 = 1$. 
\end{proof}

Either by taking a linear combination of the inequalities in this corollary, or more directly 
from Lemma~\ref{lemma:nonNegative}(iii), it follows that
if $\PL$ is a transition matrix then $\lambda_0 \ge \lambda_1 \ge \ldots \ge \lambda_{n-1}$,
and so the below-diagonal entries of $\binom{x+1}{x}(\lambda_x-\lambda_{x+1})$
are non-negative.
Similarly, from the entries below these, we get the convexity property
$(\lambda_{x-1} + \lambda_{x+1})/2 \ge \lambda_{x}$ for all $x \in n$ such that $1 \le x < n-1$.
As mentioned in the introduction, the full condition in Corollary~\ref{cor:binomialTransformIsInvolutiveWalk} 
implies that the eigenvalues are completely monotonic.
This is of course the case for
the eigenvalues $\lambdac{a}{b}{d}$ of the matrices $P(\gammac{a}{b})$, $P(\gammai{c})$ and $P(\deltac{a'}{b'})$.
Since these transition matrices are of irreducible ergodic walks, 
it is natural to ask, more generally, when is the involutive walk defined by~$P^\lambda$ 
irreducible and ergodic? This has a neat answer, although the proof, using the following  proposition,
is unavoidably somewhat technical. 

\begin{proposition}\label{prop:PLergodic}
Suppose that $P^\lambda$ is the transition matrix of an involutive walk
in which $0$ is accessible from all states.
Let $s \in \N$ be maximal such that $\lambda_{n-s} = \ldots = \lambda_{n-1}$. Then $s \le n/2$ and
$1 > \lambda_1 > \ldots > \lambda_{n-s-1} > \lambda_{n-s} = \ldots = \lambda_{n-1} > 0$ 
and the walk is
irreducible, recurrent and ergodic with a unique strictly positive invariant distribution.
\end{proposition}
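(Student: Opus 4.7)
The plan is to argue in three main stages, treating the eigenvalue structure, the bound $s \le n/2$, and ergodicity in turn.

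First, since $P^\lambda$ is the transition matrix of an involutive walk, $H^\lambda$ is row-stochastic and non-negative, and Corollary~\ref{cor:binomialTransform} forces $\lambda_0 = 1$. Lemma~\ref{lemma:nonNegative}(iii) gives $\lambda_x \ge \lambda_{x+1}$ for all $x$, and Lemma~\ref{lemma:nonNegative}(iv) combined with the maximality of $s$ produces the strict chain $1 = \lambda_0 > \lambda_1 > \ldots > \lambda_{n-s-1} > \lambda_{n-s} = \ldots = \lambda_{n-1}$: any equality $\lambda_t = \lambda_{t+1}$ with $t < n-s-1$ would collapse the constant tail past index $n-s$, contradicting maximality. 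Next, by lower-triangularity of $H^\lambda$, the entry $P^\lambda_{y,0} = H^\lambda_{y,n-1}$ vanishes unless $y = n-1$, in which case it equals $\lambda_{n-1} = \lambda_{n-s}$. Thus state $0$ can be entered only from $n-1$, and the accessibility hypothesis forces $\lambda_{n-s} > 0$.

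The crux is the bound $s \le n/2$, which I would prove by contradiction. Suppose $s > n/2$ and set $k = n-s < n/2$. Lemma~\ref{lemma:nonNegative}(iii) yields $H^\lambda_{n-s+1,n-s} = (n-s+1)(\lambda_{n-s} - \lambda_{n-s+1}) = 0$, and propagating via Lemma~\ref{lemma:nonNegative}(ii), together with lower-triangularity of $H^\lambda$, forces the bottom-right $s \times s$ block of $H^\lambda$ to equal $\lambda_{n-s} I_s$. Translating to $P^\lambda = H^\lambda J(n)$, for every $z \in \{0,\ldots,s-1\}$ the column of $P^\lambda$ indexed by $z$ has its sole non-zero entry, equal to $\lambda_{n-s}$, at row $z^\star \in \{n-s,\ldots,n-1\}$. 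Tracing the set of states reachable from $0$, beginning with the forced move $0 \to n-1$ and then exploiting the restricted columns, one shows the walk is confined to the union $\{0,\ldots,k-1\} \cup \{s,\ldots,n-1\}$; its complement, the middle block $\{k,\ldots,s-1\}$, is non-empty since $2s-n>0$. This proper invariant subset contradicts the irreducibility implicit in the hypothesis.

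Finally, under $s \le n/2$, I would deduce irreducibility from Lemma~\ref{lemma:longWalk}: the accessibility $x \to x^\star$ follows from $P^\lambda_{x,x^\star} = \lambda_x \ge \lambda_{n-s} > 0$; the accessibility $x \to (x-1)^\star$ for $x \in \{1,\ldots,n-s\}$ follows directly from $H^\lambda_{x,x-1} = x(\lambda_{x-1}-\lambda_x) > 0$, and for $x \in \{n-s+1,\ldots,n-1\}$ one chains elementary flips through the strictly decreasing range, whose length $n-s+1 \ge n/2+1$ leaves ample room to reach $(x-1)^\star$. Aperiodicity follows from a self-loop: for odd $n$ we have $P^\lambda_{(n-1)/2,(n-1)/2} = \lambda_{(n-1)/2} > 0$, and for even $n$ we have $P^\lambda_{n/2,n/2} = (n/2)(\lambda_{n/2-1}-\lambda_{n/2}) > 0$ because $s \le n/2$ places index $n/2-1$ in the strictly decreasing range. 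A finite irreducible aperiodic chain is ergodic with a unique strictly positive invariant distribution. The main obstacle is the second stage: the careful combinatorial verification that the zero-block structure forced on $H^\lambda$ when $s > n/2$ confines the walk from $0$ to $\{0,\ldots,k-1\} \cup \{s,\ldots,n-1\}$; the multi-step accessibility argument for $x > n-s$ in the final stage also requires attention, though it is streamlined by the generous room afforded by $s \le n/2$.
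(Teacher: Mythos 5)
Your first and third stages are sound and essentially reproduce the paper's own reasoning: the strict chain of eigenvalues via Lemma~\ref{lemma:nonNegative}(iii),(iv), the positivity of $\lambda_{n-1}$ from the fact that $0$ can be entered only from $n-1$, the one-step accessibilities feeding into Lemma~\ref{lemma:longWalk}, and the self-loop at $\lfloor n/2\rfloor$ for aperiodicity. The gap is in your second stage. The confinement computation itself is correct: if $s>n/2$ then the bottom-right $s\times s$ block of $H^\lambda$ is $\lambda_{n-s}I_s$ and the forward orbit of $0$ lies in $\{0,\ldots,n-s-1\}\cup\{s,\ldots,n-1\}$. But you close the argument by appealing to ``the irreducibility implicit in the hypothesis,'' and irreducibility is \emph{not} implicit in the hypothesis. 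The hypothesis says only that $0$ is accessible \emph{from} every state, i.e.\ every state has a positive-probability path \emph{into} $0$; it says nothing about which states are reachable \emph{from} $0$. Showing that the middle block $\{n-s,\ldots,s-1\}$ cannot be reached from $0$ is entirely compatible with every state being able to reach $0$, so no contradiction is obtained.

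This is not a repairable omission, because under the stated hypothesis the claim $s\le n/2$ is actually false. Take $n=4$ and $\lambda=(1,\tfrac12,\tfrac12,\tfrac12)$, so $s=3>n/2$. Lemma~\ref{lemma:practical}(iv) gives
$P^\lambda=\bigl(\begin{smallmatrix}0&0&0&1\\ 0&0&1/2&1/2\\ 0&1/2&0&1/2\\ 1/2&0&0&1/2\end{smallmatrix}\bigr)$,
a stochastic anti-triangular matrix in which every state reaches $0$ through state $3$, yet the walk is reducible: $\{0,3\}$ is a closed class and $1,2$ are transient. The paper's own proof at least aims in the right direction for the stated hypothesis --- it tries to show the middle states are forced to step only to each other and hence can never reach $0$ --- but the justification given (the column structure of $H^\lambda$) controls which states can \emph{enter} the middle states, not where they can go, and the same example defeats it (from state $1$ the walk may step to $3$). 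In short: your stage two proves $s\le n/2$ correctly \emph{under the hypothesis that the walk is irreducible} (equivalently, that every state is also accessible from $0$), and with that strengthened hypothesis your route is clean; under the hypothesis as literally stated, the step fails and indeed the conclusion fails.
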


\begin{proof} 
By Corollary~\ref{cor:binomialTransform}, $\lambda_0 = 1$.
Since $0$ can be reached in one step only from $n-1$, and $\PL_{n-1,0} = \DSL_{n-1,n-1} = \lambda_{n-1}$, 
we have $\lambda_{n-1} > 0$.
Hence, by Lemma~\ref{lemma:nonNegative}(iii) and (iv),
$\lambda_0 > \lambda_1 > \ldots > \lambda_{n-s-1} > \lambda_{n-s} = \ldots
= \lambda_{n-1} > 0$.
By Lemma~\ref{lemma:nonNegative}(iii), $\DSL_{n-s+1,n-s} = \binom{n-s+1}{n-s} (\lambda_{n-s} - \lambda_{n-s+1}) = 0$.
Hence, by Lemma~\ref{lemma:nonNegative}(ii), $\DSL_{xy} = 0$ for all $x \ge n-s+1$, $y \ge n-s$ with $x - (n-s+1) \ge y - (n-s)$. Thus if $y > n-1-s$ then column $y$ of $\DSL$ is zero except for its diagonal entry $\lambda_y$.
Since $s^\star = n-1-s$, it follows that if $y > s^\star$ then
column $y$ of~$\DSL$ has a unique non-zero entry of $\lambda_{y}$ in row~$y$.

Suppose, for a contradiction, that $s > n/2$. If $n$ is even, with $n=2m$ then  $s^\star < m-1$,
and the unique non-zero entries of $H^\lambda$ in columns $m-1$ and $m$ are on the diagonal.
Thus from $m-1$ the involutive walk must step to $(m-1)^\star = m$, and from $m$ it must step to $m^\star = m-1$,
contradicting that~$0$ is accessible from any state. Similarly, if $n$ is odd, with $n=2m+1$ then 
$s^\star < m$ and from $m$ the involutive walk must step to $m^\star = m$. Therefore~$s \le n/2$.

Again by Lemma~\ref{lemma:nonNegative}(iii), $\DSL_{s^\star+1,s^\star} = \DSL_{n-s,n-s-1} = 
(n-s)(\lambda_{n-s-1}
-\lambda_{n-s}) > 0$.
By Lemma~\ref{lemma:binomialTransformRecurrence},
\[ H^\lambda_{x,s^\star} \binom{x}{s^\star}^{-1}\!\!\! = H^\lambda_{x-1,s^\star} \binom{x-1}{s^\star}^{-1} \!\!\!
- H^\lambda_{x,s^\star+1} \binom{x}{s^\star +1}^{-1}\!\!
= H^\lambda_{x-1,s^\star} \binom{x-1}{s^\star}^{-1}\!\! \]
for $x > s^\star$.
It follows inductively that all the entries on or below the diagonal in column $s^\star$ of $H^\lambda$ are non-zero.
To summarise so far, we have shown that
the down-steps $x \mapsto x$ for $x \in n$, $x \mapsto x-1$ for $x \in \{1,\ldots, s^\star+1\}$ and $n-1 \mapsto s^\star$ have non-zero probability. 
Moreover, since $s^\star + 1 - s = (n-1-s) + 1 - s = n - 2s \ge 0$, we have $s \le s^\star + 1$.
It is now routine to check that there is a cycle in the involutive walk
\[ n-1 \mapsto s \dmapsto s^\star+1 \mapsto s-1 \dmapsto s^\star + 2 \mapsto \ldots \dmapsto n-2 \mapsto 1 \dmapsto n-1 \]
where bold arrows indicate the steps whose down-step is $x \mapsto x-1$. 
Observe that $t-1$ appears two steps before $t$ for each $t \in \{s^\star +2, \ldots, n-1\}$. Therefore
$t-1$ is accessible from $t$ for each such $t$. Since the step $x \mapsto x^\star$ has non-zero probability for all $x$,
it follows that $(t-1)^\star$ is accessible from $t$ for each $t \in \{s^\star +2, \ldots, n-1\}$. Since
the step $x \mapsto (x-1)^\star$ has positive probability for each $x \in \{1,\ldots,s^\star + 1\}$
the hypothesis of Lemma~\ref{lemma:longWalk} holds. Therefore
the involutive walk is irreducible.
Moreover, as in the proof of Theorem~\ref{thm:ergodic},
 the step $m \mapsto m$ where $m = \lfloor n/2 \rfloor$
has non-zero probability. Hence the chain is aperiodic and ergodic with a unique strictly positive
invariant distribution.
\end{proof}

\begin{corollary}\label{cor:PLergodic}
Suppose that $P^\lambda$ is the transition matrix of an involutive walk.
Then $0$ is accessible from all states if and only if 
the walk is
irreducible, recurrent and ergodic with a unique strictly positive invariant distribution.
\end{corollary}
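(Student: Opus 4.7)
The corollary is essentially a packaging of Proposition~\ref{prop:PLergodic}, so the plan is very short. For the forward implication, I would simply invoke Proposition~\ref{prop:PLergodic}: its hypothesis is precisely that $P^\lambda$ is the transition matrix of an involutive walk in which $0$ is accessible from every state, and its conclusion gives irreducibility, recurrence, ergodicity, and the existence and uniqueness of a strictly positive invariant distribution. There is nothing further to verify.

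For the reverse implication, I would observe that irreducibility of an involutive walk means, by definition, that every state communicates with every other state, and in particular~$0$ is accessible from each $x \in n$. No further argument is needed.

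The only point that requires any care is to note that both implications use the standing hypothesis that $P^\lambda$ is a valid transition matrix of an involutive walk, i.e.~that the non-negativity conditions of Corollary~\ref{cor:binomialTransformIsInvolutiveWalk} hold so that $\lambda_0 = 1$ and the row sums of $P^\lambda$ equal one. Given this, no obstacle arises, and the corollary is immediate from Proposition~\ref{prop:PLergodic} together with the definition of irreducibility.
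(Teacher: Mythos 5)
Your proposal is correct and follows exactly the paper's own argument: the forward direction is an immediate application of Proposition~\ref{prop:PLergodic}, and the converse follows because irreducibility means all states communicate, so in particular $0$ is accessible. Nothing further is needed.
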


\begin{proof}
The `only if' direction is proved by Proposition~\ref{prop:PLergodic}. Conversely, if the walk is irreducible
then $0$ is accessible.
\end{proof}

\section{Binomial transforms and reversibility}
\label{sec:reversibility}

Recall that $P^\lambda = H^\lambda J(n)$ where $H^\lambda$ is the binomial transform defined in~\eqref{eq:HL}.
By Theorem~\ref{thm:spectrum} and the remark following~\eqref{eq:HL},
the $\gammac{a}{b}$-, $\deltac{a'}{b'}$- and $\gammai{c}$-weighted involutive walks on $n$ 
are reversible for any $n$ in their domain and their transition matrices are of the form $P^\lambda$.
This proves the `if' direction of Theorem~\ref{thm:reversibleImpliesGamma}.
The converse is the surprising part and is proved in this section.

Our starting hypothesis is that $n \ge 3$ and $P^\lambda$ is a globally reversible 
(see Definition~\ref{defn:globallyReversible}) $n \times n$ transition matrix of an involutive walk
in which $0$ is accessible from every state. The corresponding down-step matrix $\DSL$ is
a binomial transform, as in~\eqref{eq:HL}, with eigenvalues $\lambda_0, \ldots, \lambda_{n-1}$.
By Corollary~\ref{cor:binomialTransform}, $\lambda_0 = 1$.
Throughout, we let $\mu = \lambda_1$ and $\nu = \lambda_2$.
If $\mu = 1$ then by Lemma~\ref{lemma:nonNegative}(iii), $\DSL_{10} = 0$, and so by Lemma~\ref{lemma:nonNegative}(iii) and (iv) it 
follows that $P^\lambda = J(n)$. This contradicts the assumption
that $0$ is accessible from every state. Hence 
\[ \tag{$\dagger$} 1 > \mu \ge \nu \ge \lambda_3 \ge \ldots \ge \lambda_{n-1} \ge 0. \]

Since the down-step matrices
$H(\gammac{a}{b})$, $H(\deltac{a'}{b'})$ and $H(\gammai{c})$ are binomial transforms, 
We have the following key principle: if $\gamma$ is the relevant weight $\gammac{a}{b}$, $\gammai{c}$ or
$\deltac{a'}{b'}$ and $\lambda_d(\gamma) = H(\gamma)_{dd}$ for $d \in \{0,\ldots,n-1\}$ then
\[ 
P^\lambda = P(\gamma) \iff \text{\parbox{3in}{\raggedright for all $d \in \ne$, the eigenvalue $\lambda_d$ of $H^\lambda$
is equal to the eigenvalue $\lambda_d(\gamma)$ of $H(\gamma)$.}}
\tag{$\star$} \]
It is therefore important that the eigenvalues $\mu$ and $\nu$ determine the parameters $a$ and $b$.
By~\eqref{eq:evaluesIntroduction},~\eqkey~requires
\begin{equation}\label{eq:munu} \frac{a+1}{a+b+2} = \mu, \quad 
\frac{(a+1)(a+2)}{(a+b+2)(a+b+3)} = \nu. \end{equation}
Provided $\nu \not= \mu^2$, these equations have the unique solution specified by the functions
\begin{equation}\label{eq:abEqns}
a(\mu, \nu) = \frac{\mu(\mu-\nu)}{\nu-\mu^2} - 1, \quad b(\mu, \nu) = \frac{(1-\mu)(\mu-\nu)}{\nu-\mu^2} - 1.
\end{equation}

This gives some motivation for the overall strategy of the proof, which is by induction on $n$, 
using the global reversibility hypothesis to show that
the three greatest eigenvalues $1, \mu, \nu$ of $H^\lambda$ 
determine all $n$ eigenvalues $\lambda_0, \ldots, \lambda_{n-1}$, and hence that the right-hand side in~\eqkey\ holds.

We structure the argument as follows: \S\ref{subsec:rKolmogorov} gives a version of Kolmogorov's Criterion
adapted to involutive walks; \S\ref{subsec:rbase} proves
Theorem~\ref{thm:reversibleImpliesGamma} when $n=3$, and motivates \S\ref{subsec:rinductive} where we formulate the 
stronger Proposition~\ref{prop:reversibleImpliesGamma}; finally in \S\ref{subsec:rproof} we prove this proposition by induction
on $n$, using Kolmogorov's Criterion on a particular $4$-cycle, dealing with the case $\nu = 2\mu-1$, when this cycle has zero probability, separately.

\subsection{Kolmogorov's Criterion}\label{subsec:rKolmogorov}
One form of Kolmogorov's Criterion (see for instance
\cite[\S 1.5]{KellyReversibilityStochasticNetworks}) is that a random walk with a strictly positive invariant distribution
is reversible if and only if the product of transition probabilities around any cycle does not
depend on the direction of travel. The special case for involutive walks is stated in (ii) below.

\begin{lemma}\label{lemma:reversibilityCriteria}
Let $P$ be the transition matrix of an involutive walk with strictly positive invariant distribution.
\begin{thmlist}
\item If the walk is reversible then for all $x$, $z \in n$, we have $P_{xz} > 0$ if and only if $P_{zx} > 0$.
\item The walk is reversible if and only~if
\[ P_{x_0x_1}P_{x_1x_2}\ldots P_{x_{\ell-1}x_0} = 
P_{x_0x_{\ell-1}} \ldots P_{x_2x_1} P_{x_1x_0} \]
for all distinct  $x_0, x_1, \ldots, x_{\ell-1} \in n$ with $\ell \ge 3$,
such that $x_i + x_{i+1} \ge n-1$ for all $i \in \{0,1,\ldots, \ell-2\}$, and $x_{\ell-1} + x_0 \ge n-1$.
\end{thmlist}
\end{lemma}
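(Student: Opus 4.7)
Part (i) is immediate from detailed balance: if the walk is reversible with strictly positive invariant distribution $\pi$, then $\pi_x P_{xz} = \pi_z P_{zx}$, and since $\pi_x, \pi_z > 0$, one side vanishes if and only if the other does.

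For the forward direction of (ii), I would apply detailed balance edge by edge along the cycle. Writing $P_{x_i x_{i+1}} = \pi_{x_{i+1}} P_{x_{i+1} x_i}/\pi_{x_i}$ and taking the product over $i$ with indices cyclic modulo $\ell$, the $\pi$-factors telescope to $1$ (because the cycle returns to $x_0$), yielding the stated identity. Strict positivity of $\pi$ is essential for the division and is guaranteed by hypothesis.

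For the converse, I would invoke the classical Kolmogorov criterion: a Markov chain with a strictly positive invariant distribution is reversible if and only if the analogous product identity holds for every simple closed cycle of length at least three. The restriction to cycles satisfying $x_i + x_{i+1} \geq n-1$ should drop out automatically from the anti-triangular structure of an involutive walk: by~\eqref{eq:Pgamma}, $P_{xz}$ is supported on pairs with $z^\star \leq x$, i.e.\ $x+z \geq n-1$, and this sum bound is symmetric in $x$ and $z$. Hence for any closed walk in which some consecutive pair fails the bound, both the forward and reverse products contain a zero factor at that edge and the identity is trivially satisfied; the only non-trivial cycles to check are precisely the ones in the hypothesis.

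The main obstacle, which is mild, is the standard reduction from arbitrary closed walks to \emph{simple} cycles (distinct vertices): one cuts a closed walk at any repeated vertex into two shorter closed walks, and the product condition for the whole then follows from the conditions for the parts by multiplicativity. Iterating this reduction and keeping only cycles of length at least three (since length-two cycles give the tautology $P_{xz}P_{zx}=P_{zx}P_{xz}$) finishes the argument.
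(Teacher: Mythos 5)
Your proposal is correct and follows essentially the same route as the paper: part (i) from detailed balance, and part (ii) by invoking Kolmogorov's Criterion and observing that the anti-triangular support of $P$ (that $P_{xz}=0$ unless $x+z\ge n-1$) makes all other cycles, and all $2$-cycles, trivially satisfy the product identity. The paper's proof is simply a terser version of the same argument, leaving the telescoping and the reduction to simple cycles inside the citation to Kolmogorov's Criterion.
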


\begin{proof}
From the detailed balance equations $\pi_x P_{xz} = \pi_z P_{zx}$ we
get $P_{xz} > 0$ if and only if  $P_{zx} > 0$, proving (i).
Since $P_{xz} = 0$ unless $x + z \ge n-1$, it suffices to verify Kolmogorov's Criterion for cycles
of the form $x_0 \mapsto x_1 \mapsto x_2 \mapsto \ldots \mapsto x_{\ell-1} \mapsto x_0$ where $x_i + x_{i+1} 
\ge n-1$ for all~$i \in \{0,1,\ldots, \ell-2\}$ and $x_{\ell-1} + x_0 \ge n-1$. This is trivial for $2$-cycles, so we may suppose that $\ell \ge 3$, giving~(ii).
\end{proof}

\subsection{Base case: $n=3$}\label{subsec:rbase}
If $\nu > \mu^2$ then defining $a$ and $b$ by~\eqref{eq:abEqns} we have $a$, $b \in \Rg$ and 
the eigenvalues of $\DSL$ and $H(\gammac{a}{b})$ agree, so by~\eqkey, $\DSL = H(\gammac{a}{b})$.
If $\nu = \mu^2$ then $\mu < 1$ by ($\dagger$), and so by~\eqkey, $H^\lambda = \DS(\gammai{c})$ where $1/(c+1) = \mu$,
or equivalently, $c = \frac{1-\mu}{\mu}$.
In the remaining case $\nu < \mu^2$ and again
defining $a$ and $b$ by~\eqref{eq:abEqns} we have $a$, $b < -1$.
By either Lemma~\ref{lemma:practical}(iv) or Lemma~\ref{lemma:binomialTransformRecurrence},
\[ H^\lambda = \left(\begin{matrix} 1 & \cdot & \cdot \\ 1-\mu & \mu & \cdot \\ 1-2\mu + \nu & 2(\mu-\nu) & \nu  
\end{matrix} \right) \]
and so $\nu \ge 2\mu-1$. Moreover, since $0$ is accessible from all states, $\nu > 0$.
Setting $\nu =2\mu-1+\tau(1-\mu)$ we have
$\mu-\nu = (1-\mu)(1-\tau)$ and $\mu-\nu^2 = (1-\mu)(1-\mu - \tau)$, hence $(1-\mu)(\mu-\nu)/(\nu-\mu^2) = 
-(1-\mu)(1-\tau)/(1-\mu-\tau)$ where $1-\mu-\tau > 0$ and 
\[ -b\bigl( \mu,2\mu-1+\tau(1-\mu)\bigr) = 2 + \frac{\mu \tau}{1-\mu - \tau}, \]
for $0 \le \tau < 1-\mu$. Similar calculations show that
\[ -a\bigl( \mu, 2\mu-1+\tau(1-\mu) \bigr) = \frac{1 - \tau- \mu\tau}{1 - \tau - \mu}.\]
Comparing the numerator and denominator immediately above we see that $-a\bigl( \mu, 2\mu-1+\tau(1-\mu) \bigr)$
is an increasing function of $\tau$ with minimum $\mfrac{1}{1-\mu}$ when $\tau = 0$. Since
$\nu > 0$ we have $\mu > \mfrac{1}{2}$ and so $a' > 2$.
Moreover, when $\tau = 0$, we have $b' = 2$ as in the final case of Theorem~\ref{thm:reversibleImpliesGamma}.
Otherwise $\tau > 0$ and $b' > 2$, again as required.
This completes the proof of the `only if' direction when $n=3$.

\subsection{Inductive claim}\label{subsec:rinductive}
To enable an inductive proof, we must prove a stronger version of Theorem~\ref{thm:reversibleImpliesGamma}
that takes into account 
that the exceptional case $\nu = 2\mu-1$ and $a'=\mfrac{1}{1-\mu}$
is the first in an infinite family. 

To motivate the statement of Proposition~\ref{prop:reversibleImpliesGamma}, observe that if $m \in \N$ then 
\smash{$\deltac{a'}{m}_{[y,x]} = \binom{a'-1}{y}\binom{m-1}{x-y}$} vanishes if and only if $x-y \ge m$.
Thus \emph{provided $n \ge m$}, the transition matrix $P(\deltac{a'}{m})$ has 
exactly $m$ non-zero anti-diagonal bands. 
This was seen, when $m=2$, in one of the examples before Theorem~\ref{thm:spectrum}, and also in the base case just proved,
for which $-b(\mu, \nu) = b(\mu, 2\mu-1) = 2$ and $\mu > \mfrac{1}{2}$. 
Therefore we may assume $\mu > 1/2$ when
finding these exceptional cases. Let
$b'_\mu(\nu) = -b(\mu, \nu)$ and note that 
\[ \frac{\mathrm{d}b'_\mu(\nu)}{\mathrm{d}\nu} = \frac{\mu(1-\mu)^2}{(\mu^2-\nu)^2}.\]
Thus $b'_\mu$ is increasing on the interval $[0,\mu^2)$, 
with $b'_\mu(0)= \mfrac{1}{\mu}$ and $b'_\mu(\nu) \rightarrow \infty$ as $\nu \rightarrow \mu^2$.
Since $\mu > \mfrac{1}{2}$, for each $m \in \N$ with $m \ge 2$ there exists a unique~$\nu$ such
that $b'_\mu(\nu) = m$. We denote this value by $\nu_m(\mu)$. Let $a'_m(\nu) = -a(\mu, \nu_m(\mu))$. By
inverting $b'_\mu$ and then substituting one gets the explicit formulae
\begin{equation}\label{eq:mMu} \nu_m(\mu) = \frac{\mu(m\mu-1)}{m-2+\mu}, \quad 
a'_m(\mu) = \frac{(m-2)\mu +1}{1-\mu}.\end{equation}
\noindent In particular,
 $\nu_2(\mu) = 2\mu-1$ and $a'_2(\mu) = \mfrac{1}{1-\mu}$, as seen in the case $n=3$. 
 Since $b'_\mu$ is an increasing function,
so is the sequence $\nu_m(\mu)$. 
Moreover, since $b'_\mu(\nu) \rightarrow \infty$
as $\nu \rightarrow \mu^2$, the sequence $\nu_m(\mu)$ converges to $\mu^2$ as $m \rightarrow \infty$.
This can be seen in the graph in Figure~1 showing $a(\mfrac{2}{3},\nu)$ and $b(\mfrac{2}{3}, \nu)$ 
and the exceptional values $\nu_m(\mfrac{2}{3})$ for $m \in \{2,3,4,5,6\}$.

\begin{figure}
\begin{center}\setlength{\unitlength}{1cm}
\begin{picture}(5,5.5)
\put(-1,0) {\includegraphics[width=3.5in]{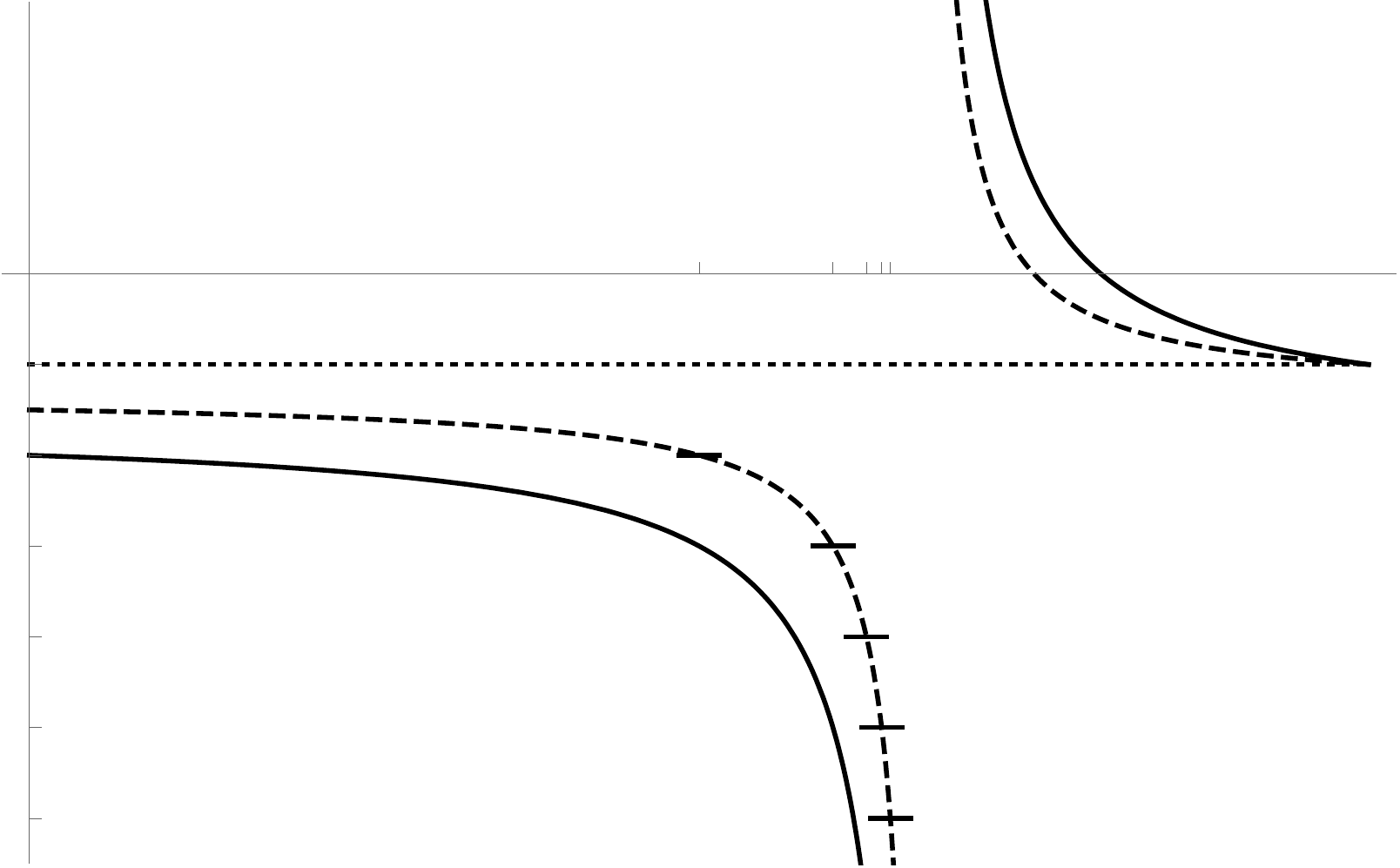}}
\put(5.6,5) {$\scriptstyle a(\mfrac{2}{3}, \nu)$}
\put(4.1,5) {$\scriptstyle b(\mfrac{2}{3}, \nu)$}
\put(-1.2,3.7) {$\scriptstyle 0$}
\put(-1.35,3.1) {$\scriptstyle -1$}
\put(-1.35,2.525) {$\scriptstyle -2$}
\put(-1.35,1.95) {$\scriptstyle -3$}
\put(-1.35,1.375) {$\scriptstyle -4$}
\put(-1.35,0.8) {$\scriptstyle -5$}
\put(-1.35,0.225) {$\scriptstyle -6$}
\put(3.35,3.4) {$\sfrac{1}{3}$}
\put(4.17,3.4) {$\sfrac{2}{5}$}
\put(4.35,3.4) {$\sfrac{5}{12}$}
\put(4.45,4.05) {$\sfrac{1\hskip-0.5pt4}{3\hskip-0.5pt3}$}
\put(4.7,3.4) {$\sfrac{3}{7}$}
\end{picture}
\end{center}
\caption{The functions $a(\mfrac{2}{3},\nu)$ and $b(\mfrac{2}{3}, \nu)$ for
$0 \le \nu \le 2/3$
and the exceptional values $\nu_m(\mfrac{2}{3})$ for $m \in \{2,3,4,5,6\}$, namely
$\mfrac{1}{3}, \mfrac{2}{5}, \mfrac{4}{12}, \mfrac{14}{33}, \mfrac{3}{7}$.
Each has a vertical
asymptote at $(\mfrac{2}{3})^2$.} 
\end{figure}

Instead fixing $m$, 
the relevant values of~$n$ are  given the lemma below, which shows that the chosen
domains of $\deltac{a'}{b'}$ for $b' \not\in \N$ and $\deltac{a'}{b'}$ for $b' \in \N$ specified 
after~\eqref{eq:delta}
are as large as possible.

\begin{samepage}
\begin{lemma}\label{lemma:deltaDomainSimplified}
Let $a'$, $b' \in \R^{>1}$.
\begin{thmlist}
\item If $b' \not\in \N$ then the 
function $\deltac{a'}{m}$ on intervals of $n$ is a weight
such that $0$ is accessible from every state in the $\deltac{a'}{b'}$-weighted involutive walk
if and only if $n \le \min( \lceil a' \rceil, \lceil b' \rceil)$ and no larger domain is possible.

\item Suppose that $b' = m \in \N$. The function $\deltac{a'}{m}$ on intervals of $n$ is a weight
such that $0$ is accessible from every state in the $\deltac{a'}{m}$-weighted involutive walk
if and only if $n \le \lceil a' \rceil$. Moreover, $m \ge \lfloor \mfrac{1-\mu}{\mu} (n-2) \rfloor + 2$,
where $\mu$ is the usual eigenvalue.
\end{thmlist}
\end{lemma}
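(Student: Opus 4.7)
The plan is to reduce everything to a sign analysis of the two binomial coefficients in the factorisation $\deltac{a'}{b'}_{[y,x]} = \binom{a'-1}{y}\binom{b'-1}{x-y}$, combined with Theorem~\ref{thm:ergodic}. First I would observe, by writing $\binom{a'-1}{y}$ as a falling-factorial product, that this coefficient is strictly positive exactly for $y \le \lceil a' \rceil - 1$ and first fails to be positive at $y = \lceil a' \rceil$: it vanishes there (and thereafter) when $a' \in \N$, while it is strictly negative when $a' \not\in \N$. The same statement holds for $\binom{b'-1}{x-y}$ in place of $\binom{a'-1}{y}$.

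For (i), if $n \le \min(\lceil a' \rceil, \lceil b' \rceil)$ then every $\deltac{a'}{b'}_{[y,x]}$ with $0 \le y \le x \le n-1$ is a product of two strictly positive factors, so the weight is strictly positive and hypothesis~(i) of Theorem~\ref{thm:ergodic} gives accessibility of~$0$. For the converse, with $n$ exceeding $\min(\lceil a' \rceil, \lceil b' \rceil)$, I would split into cases: if the offending parameter is not in $\N$ then one of the corner entries $\deltac{a'}{b'}_{[n-1,n-1]}$ or $\deltac{a'}{b'}_{[0,n-1]}$ is strictly negative, failing the non-negativity of a weight; if instead $a' \in \N$ and $n > a'$ then $\binom{a'-1}{n-1} = 0$, so $\deltac{a'}{b'}_{[n-1,x]} = 0$ for every $x$, whence $0$ is unreachable since the only step reaching $0$ is a down-step to $n-1$.

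For (ii), where $b' = m \in \N$, the coefficient $\binom{m-1}{x-y}$ is always non-negative, so non-negativity of $\deltac{a'}{m}$ depends only on the $a'$-factor. Assuming $n \le \lceil a' \rceil$ and $m \ge 2$, both $\deltac{a'}{m}_{[x,x]} = \binom{a'-1}{x}$ and $\deltac{a'}{m}_{[x-1,x]} = \binom{a'-1}{x-1}(m-1)$ are strictly positive on the relevant range, so hypothesis~(ii) of Theorem~\ref{thm:ergodic} applies. The converse when $n > \lceil a' \rceil$ follows by the same integer versus non-integer dichotomy for $a'$ used in~(i).

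Finally for the lower bound on $m$, I would read off $\mu = H(\deltac{a'}{m})_{11} = \deltac{a'}{m}_{[1,1]}/N(\deltac{a'}{m})_1 = (a'-1)/(a'+m-2)$, invert to express $a'$ as a linear function of $\mu$ and $m$, and then translate the accessibility condition $n \le \lceil a' \rceil$ into $a' > n-1$. A short rearrangement gives $m > (1-\mu)(n-2)/\mu + 1$, and integrality of $m$ yields the stated floor bound. The main delicate point is the split between $a' \in \N$ and $a' \not\in \N$, since the domain condition fails for genuinely different reasons (vanishing versus negative coefficient); the accessibility failure in the integer case is not a weight-positivity issue and therefore needs separate handling.
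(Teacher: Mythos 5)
Your proposal is correct and follows essentially the same route as the paper: the same sign analysis of $\binom{a'-1}{y}$ and $\binom{b'-1}{x-y}$ at the boundary index, the same integer-versus-non-integer dichotomy (vanishing entry killing accessibility of $0$ versus a negative entry violating the weight condition), and the same derivation of the bound on $m$ by inverting $\mu = (a'-1)/(a'+m-2)$ and imposing $n \le \lceil a' \rceil$. The only difference is that you spell out the ``if'' directions via Theorem~\ref{thm:ergodic} explicitly, which the paper delegates to its earlier discussion of $\deltac{a'}{b'}$.
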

\end{samepage}

\begin{proof}
By~\eqref{eq:delta}, $\deltac{a'}{m}_{[y,x]} = \binom{a'-1}{y}\binom{b'-1}{x-y}$.
Thus if $n > \lceil a' \rceil$ 
then $\deltac{a'}{b'}_{[n-1, n-1]}$ is either negative, contrary to the definition
of weight, or zero, implying that $(n-1)^\star = 0$ is inaccessible from $n-1$, and so
inaccessible from any state.
Similarly, if $b' \not\in \N$ then taking $n > \lceil b' \rceil$ we find
that $\deltac{a'}{b'}_{[0,n-1]}$ is negative. Thus the largest possible domain is $\min(\lceil a' \rceil,
\lceil b'\rceil)$, as claimed.
When $b' \in \N$, as in (ii), there is no constraint from $\binom{b'-1}{x-y}$,
and so the largest domain is instead~$\lceil a' \rceil$.
To complete the proof, we use the 
equation for $a'_m(\mu)$ in~\eqref{eq:mMu} to get
$n \le \lceil \frac{(m-2)\mu +1}{1-\mu} \rceil$. Hence \smash{$n < \frac{(m-2)\mu+1}{1-\mu} + 1$},
and so $(1-\mu)n < (m-2)\mu + 2 - \mu$ which rearranges to
 \smash{$m > \mfrac{1-\mu}{\mu}n - \mfrac{2}{\mu} + 3 = \mfrac{1-\mu}{\mu} (n-2) +1$}, or equivalently,
$m \ge \lfloor  \frac{1-\mu}{\mu}(n-2) \rfloor + 2$.
\end{proof}

We can now state our precise result, knowing that all the transition matrices specified are well-defined and
give involutive walks in which $0$ is accessible from every state; the condition $\mu > \mfrac{1}{2}$ in (iv)
was seen when $n=3$ in the proof of the base case.

\begin{proposition}\label{prop:reversibleImpliesGamma}
Let $\PL$ be the transition matrix of a reversible Markov chain on $n$ where $n\ge 3$ in which 
$0$ is accessible from every state.
Then $\lambda_0= 1$. Let $\mu = \lambda_1$, $\nu = \lambda_2$. 
One of the following cases applies.
\begin{thmlist}
\item If $\nu > \mu^2$ then $\mu > \nu$ and
the unique $a$, $b \in \Rg$ such that $\PL = P(\gammac{a}{b})$ are $a(\mu, \nu)$ and $b(\mu, \nu)$.
\item If $\nu = \mu^2$ then $\mu < 1$ and $\PL= P(\gammai{c})$ where $c = (1-\mu)/\mu$.
\item 
If $\nu_{n-1}(\mu) < \nu < \mu^2$ then the unique $a'$, $b' \in \R^{>1}$
such that $\PL = P(\deltac{a'}{b'})$ are $-a(\mu, \nu)$ and $-b(\mu, \nu)$ and $n \le \min(\lceil a' \rceil, \lceil b' \rceil)$.
\item Otherwise, $\mu > \mfrac{1}{2}$, $\nu = \nu_m(\mu)$ for a unique $m \in n$ such that 
$m \ge \lfloor \mfrac{1-\mu}{\mu} (n-2) \rfloor + 2$
and the unique $a' \in \R^{> 1}$ such
that $\PL = P(\deltac{a'}{m})$ is $a'_m(\nu)$. Moreover $n \le \lceil a'_m(\nu) \rceil$ and 
$P(\deltac{a'}{m})$ has exactly $m$ anti-diagonal bands where $m < n$.
\end{thmlist}
\end{proposition}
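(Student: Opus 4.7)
The plan is to proceed by induction on $n$, with base case $n = 3$ proved in \S\ref{subsec:rbase}. By Lemma~\ref{lemma:practical}(iv) the $(n-1) \times (n-1)$ top-right submatrix of $P^\lambda$ is $P^{(\lambda_0, \ldots, \lambda_{n-2})}$, and by global reversibility it is itself the transition matrix of a reversible involutive walk in which $0$ is accessible from every state. The inductive hypothesis therefore identifies $\lambda_0 = 1, \mu, \nu, \lambda_3, \ldots, \lambda_{n-2}$ as the diagonal entries of $H(\gamma)$ for a unique weight $\gamma$ determined by $\mu$ and $\nu$ through the formulae of one of the four cases. By the principle \eqkey, it then suffices to show that $\lambda_{n-1}$ agrees with the $(n-1)$-st diagonal entry of $H(\gamma)$ when the same formulae are applied at size $n$.

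To pin down $\lambda_{n-1}$ the first main step is to apply Kolmogorov's criterion (Lemma~\ref{lemma:reversibilityCriteria}(ii)) to the $4$-cycle $1 \to n-1 \to 2 \to n-2 \to 1$; each of its adjacency sums equals or exceeds $n-1$. Using Lemma~\ref{lemma:practical}(iv) to expand the relevant entries of the bottom row of $H^\lambda$ in terms of $\lambda_{n-3}$, $\lambda_{n-2}$, $\lambda_{n-1}$, the criterion becomes a single polynomial equation whose only unknown is $\lambda_{n-1}$, and a brief calculation shows that the coefficient of $\lambda_{n-1}$ is non-zero. Provided the cycle has positive probability in both directions, this equation therefore has a unique solution; since by Theorem~\ref{thm:spectrum} the $\gamma$-weighted walk is itself reversible and satisfies the same equation after substituting the already-identified values of $\lambda_{n-3}$ and $\lambda_{n-2}$, the unique solution must be $\lambda_{n-1}(\gamma)$.

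The only case in which this $4$-cycle carries no information is $\nu = 2\mu - 1 = \nu_2(\mu)$: the backward transition $P^\lambda_{2,n-1} = 1 - 2\mu + \nu$ vanishes, and by Lemma~\ref{lemma:nonNegative}(ii) applied to the zero entry $H^\lambda_{2,0} = 1 - 2\mu + \nu$ the forward transition $P^\lambda_{n-1,2} = H^\lambda_{n-1,n-3}$ also vanishes. In this degenerate case $H^\lambda$ has only two non-zero anti-diagonal bands everywhere, so the last row reduces to the two entries $\lambda_{n-1}$ and $(n-1)(\lambda_{n-2} - \lambda_{n-1})$; the row-sum identity $H^\lambda v(0) = v(0)$ from Lemma~\ref{lemma:practical}(iii) then gives $\lambda_{n-1}$ directly in terms of $\lambda_{n-2}$, and a short computation confirms this matches $\lambda_{n-1}(\deltac{a'}{2})$ with $a' = 1/(1-\mu)$.

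The hardest part I anticipate is the bookkeeping across case boundaries as $n$ grows: it can happen that the submatrix falls into case~(iii) at size $n-1$ with parameter $b' \in (n-2, n-1)$, whereas the correct answer at size~$n$ is case~(iv) with $m = n-1$, because the weight $\deltac{a'}{b'}$ with $b' \notin \N$ is not defined at size $n > \lceil b' \rceil$. Managing these transitions requires a careful comparison of $\nu$ with the thresholds $\nu_m(\mu)$ together with the monotonicity of $\nu_m(\mu)$ in $m$ established before the proposition, and with the domain calculations of Lemma~\ref{lemma:deltaDomainSimplified}. Once this is handled, the inequality constraints stated in cases~(i)--(iv) follow from the explicit formulae \eqref{eq:abEqns} and \eqref{eq:mMu} together with the strict decrease of the $\lambda_d$ guaranteed by Proposition~\ref{prop:PLergodic}.
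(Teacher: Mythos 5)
Your plan matches the paper's proof essentially step for step: induction on $n$ from the base case $n=3$, Kolmogorov's criterion applied to the $4$-cycle $n-1 \mapsto 1 \mapsto n-2 \mapsto 2 \mapsto n-1$ to obtain an equation linear in $\lambda_{n-1}$ with non-zero coefficient forcing $\lambda_{n-1} = \lambda_{n-1}(\gamma)$, a separate treatment of the degenerate case $\nu = 2\mu-1$ using the two-band structure, and the same threshold bookkeeping between cases (iii) and (iv). The only cosmetic differences are that the paper concludes the degenerate case from the vanishing of $P^\lambda_{n-1,n-3}$ (deduced via reversibility from $P^\lambda_{n-3,n-1}=0$) rather than your row-sum identity, and that it explicitly replaces the $4$-cycle by a $3$-cycle when $n=4$ to avoid the repeated vertex $2=n-2$ --- a small detail you should add.
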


The example below shows the interesting behaviour in (iii) and~(iv).

\begin{example}\label{ex:reversibleSpectrum}
Take $n=10$. As in Figure~1, we take $\mu = \mfrac{2}{3}$. A  special feature of this case
is that
$a'_m(\mu) = 2m-1$ is integral.
By (iii) and~(iv) in Proposition~\ref{prop:reversibleImpliesGamma}, either 
$\nu >  \nu_9(\mfrac{2}{3}) 
= \mfrac{10}{23}$,
 or $\nu$, $a'$, $b'$ 
are in the table below.

\medskip
\begin{center}
\begin{tabular}{cccccc} \toprule 
$\nu$  & $\mfrac{10}{23}$ & $\mfrac{13}{30}$ & $\mfrac{22}{51}$ & $\mfrac{3}{7}$ \\ 
$a' = a'(\mfrac{2}{3}, \nu)$   & 17 & 15 & 13 & 11 \\
$b' = m $  & 9  & 8 & 7 & 6  \\ \bottomrule \end{tabular}
\end{center}
\smallskip
\noindent By (iv), or equivalently Lemma~\ref{lemma:deltaDomainSimplified}, we require $m 
\ge \lfloor \frac{1-\mu}{\mu} (n-2) \rfloor + 2 = \mfrac{n}{2} + 1 = 6$, and so the table stops with $m=6$.
The next entry, were it permitted, would be $\nu = \mfrac{14}{45}$, $a'=9$, $b'=5$, but $9 \not> 10-1$, 
and correspondingly $10$ is not in the domain of the weight $\deltac{9}{5}$. 
Thus when $n=10$, the reversible random walks 
with second largest eigenvalue~$\mfrac{2}{3}$ classified in Proposition~\ref{prop:reversibleImpliesGamma} 
split into a continuously varying family when $\mfrac{4}{9} < \nu < \mfrac{2}{3}$, 
a one-off case when $\nu = \mfrac{4}{9}$, a  continuously varying family when $\mfrac{10}{23} < \nu < \mfrac{4}{9}$,
and final special cases for $\nu = \mfrac{10}{23}, \mfrac{13}{30}, \mfrac{22}{51}, \mfrac{3}{7}$,
in which the transition matrix has respectively $9$, $8$, $7$ and~$6$ non-zero anti-diagonal bands.
To illustrative the inductive step in the proof of Proposition~\ref{prop:reversibleImpliesGamma}
suppose we increase $n$ to~$11$. Then we 
require either $\nu > \nu_{10}(\mfrac{2}{3}) = \mfrac{17}{39}$ or $\nu = \mfrac{17}{39}$, $a' = 19$, $m = 10$,
losing the cases where $\mfrac{10}{23} < \nu < \mfrac{17}{39}$ for $n=10$.
\end{example}

\enlargethispage{6pt}

\subsection{Proof of Proposition~\ref{prop:reversibleImpliesGamma}}\label{subsec:rproof}

\subsubsection*{Case $\nu = 2\mu-1$} In
this case $\PL$ has just two non-zero anti-diagonals. We exploit this in the proof.

\begin{lemma}\label{lemma:nuLeast}
Let $n \ge 3$. If $\nu = 2\mu-1$ then $\DSL= H(\deltac{\sfrac{1}{1-\mu}}{2})$ and \emph{(iv)} in Proposition~\ref{prop:reversibleImpliesGamma} holds with $m=2$.
\end{lemma}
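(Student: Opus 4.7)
The plan is to show that the hypothesis $\nu = 2\mu-1$ propagates through $H^\lambda$, forcing every further eigenvalue $\lambda_3, \ldots, \lambda_{n-1}$ and locking $H^\lambda$ onto $H(\deltac{\sfrac{1}{1-\mu}}{2})$. First, by Lemma~\ref{lemma:practical}(iv), the $(2,0)$-entry of $H^\lambda$ equals $\lambda_0 - 2\lambda_1 + \lambda_2 = 1 - 2\mu + \nu$, which is zero by hypothesis. Since $P^\lambda$ is stochastic, $H^\lambda$ is non-negative, so Lemma~\ref{lemma:nonNegative}(ii) forces $H^\lambda_{x,y} = 0$ whenever $x \ge y + 2$. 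Thus $H^\lambda$ is supported only on its diagonal and subdiagonal, and $P^\lambda$ has exactly two non-zero anti-diagonal bands.

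Next, Lemma~\ref{lemma:practical}(iv) gives $H^\lambda_{y+2, y} = \binom{y+2}{y}(\lambda_y - 2\lambda_{y+1} + \lambda_{y+2})$, and the vanishing of these entries for $0 \le y \le n-3$ yields the second-order recurrence $\lambda_{y+2} - 2\lambda_{y+1} + \lambda_y = 0$. With initial conditions $\lambda_0 = 1$ and $\lambda_1 = \mu$, its unique solution is the arithmetic progression $\lambda_y = 1 - y(1-\mu)$. Setting $a' = 1/(1-\mu)$, I would then compute using \S\ref{subsec:gammaMatrices} (or directly from Theorem~\ref{thm:spectrum}) that $\lambda_d(\deltac{a'}{2}) = \binom{a'-1}{d}/\binom{a'}{d} = 1 - d/a'$, which matches the $\lambda_y$ term by term. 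By the principle~\eqkey{} stated before~\eqref{eq:munu}, this identifies $H^\lambda = H(\deltac{a'}{2})$.

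For the conditions in Proposition~\ref{prop:reversibleImpliesGamma}(iv): $\lambda_{n-1} > 0$ follows because $\PL_{n-1, 0} = \DSL_{n-1,n-1} = \lambda_{n-1}$ and $0$ is accessible from $n-1$; since the $\lambda_y$ are strictly decreasing, $\nu = \lambda_2 \ge \lambda_{n-1} > 0$, forcing $\mu > \sfrac{1}{2}$. The equalities $\nu = \nu_2(\mu)$ and $a' = a'_2(\mu)$ are immediate from~\eqref{eq:mMu}, and the bound $n \le \lceil a' \rceil$ follows from $\lambda_{n-1} = 1 - (n-1)(1-\mu) > 0$; rearranging $(n-1)(1-\mu) < 1$ as $(n-2)(1-\mu)/\mu < 1$ gives the equivalent form $m \ge \lfloor (1-\mu)(n-2)/\mu \rfloor + 2 = 2$ from Lemma~\ref{lemma:deltaDomainSimplified}(ii). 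Finally, $\deltac{a'}{2}$ manifestly has exactly $m = 2$ anti-diagonal bands. The argument carries no serious obstacle: everything is dictated by the single vanishing $H^\lambda_{2,0} = 0$, with Lemma~\ref{lemma:nonNegative}(ii) and Lemma~\ref{lemma:practical}(iv) doing the work of propagating it.
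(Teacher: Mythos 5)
Your proof is correct, and it takes a genuinely different route from the paper's. The paper argues by induction on $n$: assuming the top-right $(n-1)\times(n-1)$ submatrix is already $P(\deltac{\sfrac{1}{1-\mu}}{2})$, it invokes the \emph{reversibility} hypothesis via Lemma~\ref{lemma:reversibilityCriteria}(i) (symmetry of the zero pattern of $\PL$) to transfer the known zero $\PL_{n-3,n-1}=0$ to $\PL_{n-1,n-3}=0$, and from that one vanishing entry extracts the linear relation pinning down $\lambda_{n-1}$. You instead observe that $\nu=2\mu-1$ forces $H^\lambda_{2,0}=0$ and then let non-negativity do all the work: Lemma~\ref{lemma:nonNegative}(ii) propagates that single zero to the entire region $x\ge y+2$, so Lemma~\ref{lemma:practical}(iv) gives the recurrence $\lambda_{y+2}-2\lambda_{y+1}+\lambda_y=0$ for every $y$, whose solution with $\lambda_0=1$, $\lambda_1=\mu$ is the arithmetic progression $1-y(1-\mu)$; matching these against the eigenvalues $1-d(1-\mu)$ of $H(\deltac{\sfrac{1}{1-\mu}}{2})$ and invoking~\eqkey\ finishes the identification. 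What your approach buys: no induction is needed inside the lemma, it handles all $n\ge 3$ uniformly (the paper's step relies on $H^\lambda_{n-3,0}$ vanishing, which needs separate care for small $n$), and the reversibility hypothesis is never used --- only stochasticity of $\PL$ and accessibility of $0$ (to get $\lambda_{n-1}>0$), so you in fact prove a slightly stronger statement. Your verification of the conditions in Proposition~\ref{prop:reversibleImpliesGamma}(iv) --- $\mu>\sfrac{1}{2}$ from $\nu=\lambda_2>0$, the formulae $\nu_2(\mu)=2\mu-1$ and $a'_2(\mu)=\sfrac{1}{1-\mu}$, and the equivalence of $n\le\lceil a'\rceil$ with $m\ge\lfloor\mfrac{1-\mu}{\mu}(n-2)\rfloor+2$ via Lemma~\ref{lemma:deltaDomainSimplified} --- agrees with the paper's. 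One small ordering point: strictly you should confirm $n\le\lceil a'\rceil$ (so that $\deltac{a'}{2}$ is a weight with domain containing $n$ and $H(\deltac{a'}{2})$ is a well-defined $n\times n$ binomial transform) \emph{before} applying~\eqkey; since that bound is established in your final paragraph from $\lambda_{n-1}>0$, this is purely presentational.
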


\begin{proof}
It follows either directly from~\eqref{eq:delta} or using~Lemma~\ref{lemma:binomialTransformRecurrence}, that
the matrix $H(\deltac{\sfrac{1}{1-\mu}}{2})$ has entries
\begin{equation}\label{eq:DSdelta2} \DS(\deltac{\sfrac{1}{1-\mu}}{2})_{xy} = \begin{cases} x\mu - (x-1) & \text{if $x=y$} \\ 
x(1-\mu) & \text{if $x= y+1$} \\
0 & \text{otherwise.} \end{cases} \end{equation}
When $n=3$, this has the same eigenvalues, namely $1$, $\mu$, $2\mu -1$ as $\DSL$, and so the
matrices are equal by~\eqkey, and by the base case, $\mu > \mfrac{1}{2}$. Suppose inductively that~\eqkey\ holds for $n-1$. Thus the $n \times n$ matrix
$\PL$ has top-right $(n-1) \times (n-1)$ submatrix 
\smash{$P(\deltac{\sfrac{1}{1-\mu}}{2})$}. In particular, $\lambda_x = 
x\mu - (x-1)$ for $x \in n-1$ and, by~\eqref{eq:DSdelta2}, we have 
$\DSL_{n-3,0} = \DS(\deltac{\sfrac{1}{1-\mu}}{2})_{n-3,0} = 0$. 
Hence $\PL_{n-3,n-1} = 0$ and using the reversibility hypothesis, it follows from
Lemma~\ref{lemma:reversibilityCriteria}(i) that $\PL_{n-1,n-3} = 0$.
But by Lemma~\ref{lemma:practical}(iv), 
\[ \PL_{n-1,n-3} = \binom{n-1}{n-3} (\lambda_{n-3} - 2\lambda_{n-2}
+ \lambda_{n-1}).\] 
Therefore
\[ \bigl( (n-3)\mu - (n-4) \bigr) - 2\bigl( (n-2)\mu - (n-3) \bigr) + \lambda_{n-1} = 0 \]
which simplifies to $-(n-1)\mu + (n-2) + \lambda_{n-1} = 0$. Hence $\lambda_{n-1} = (n-1)\mu - (n-2)$ which
is \smash{$\lambdac{\sfrac{1}{1-\delta}}{2}{n-1}$} by~\eqref{eq:DSdelta2}. Therefore
by~\eqkey, \smash{$\PL = P(\deltac{\sfrac{1}{1-\mu}}{2})$}. Thus $a' = \mfrac{1}{1-\mu} = a'_2(\nu)$.
Moreover, by  Lemma~\ref{lemma:deltaDomainSimplified}, $2 > \lfloor \mfrac{1-\mu}{\mu} (n-2) \rfloor + 2$.
Hence $\mfrac{1-\mu}{\mu}(n-2) < 1$ which implies $n-1 < \mfrac{1}{1-\mu}$, 
hence $n \le \lceil a'_2(\nu) \rceil$,
giving the final requirement for (iv).
\end{proof}

\subsubsection*{Inductive step}
Finally we are ready to prove Proposition~\ref{prop:reversibleImpliesGamma}.
Suppose that $n \ge 4$.
By induction and the global  reversibility hypothesis, there is a
weight $\gamma$ of the required binomial type such that $\DSL$ 
agrees with $H(\gamma)$ except perhaps in its bottom row. 
Moreover, the parameters for $\gamma$ (namely~$a$ and~$b$
when $\gamma=\gammac{a}{b}$, $c$ when $\gamma = \gammai{c}$ and $a'$, $b'$ when $\gamma = \deltac{a'}{b'}$) are unique.
Restricting
further to the top-right $3 \times 3$ submatrix, the base case implies that $\mu > \nu \ge 2\mu - 1$.
If $\nu = 2\mu - 1$ then we are done
by Lemma~\ref{lemma:nuLeast}. 
Therefore
we may assume that $1 > \mu > \nu > 2\mu - 1$.

Consider the $4$-cycle 
\[ n-1 \mapsto 1 \mapsto n-2 \mapsto 2 \mapsto n-1. \]
(If $n=4$ then instead take the $3$-cycle $3\mapsto 1 \mapsto 2 \mapsto 3$ obtained by deleting the repeated vertex $2$.)
By the version of
Kolmogorov's Criterion in Lemma~\ref{lemma:reversibilityCriteria}(ii), and the hypothesis that $\PL$ is reversible,
we have 
\begin{equation}
\label{eq:cycle1} \PL_{n-1,1}\PL_{1,n-2}\PL_{n-2,2}\PL_{2,n-1} = \PL_{n-1,2}\PL_{2,n-2}\PL_{n-2,1}\PL_{1,n-1}. \end{equation}
(This holds when $n=4$ since we have simply introduced an extra term of $\PL_{2,2}$.)
By the inductive assumption
$\PL_{xz} = P(\gamma)_{xz}$ for all $x \in n-1$. 
Moreover by Theorem~\ref{thm:spectrum}  each $\gamma$-weighted involutive walk is reversible, and so~\eqref{eq:cycle1} holds when every $\PL_{xz}$ is replaced with $P(\gamma)_{xz}$. That is,
\begin{equation}
\label{eq:cycle1a} \begin{split}
P(\gamma)_{n-1,1}P(\gamma)_{1,n-2}&P(\gamma)_{n-2,2}P(\gamma)_{2,n-1} \\
&= P(\gamma)_{n-1,2}P(\gamma)_{2,n-2}P(\gamma)_{n-2,1}P(\gamma)_{1,n-1}. \end{split} \end{equation}
Let $q$ be the common value in~\eqref{eq:cycle1a}. Note that $q \not= 0$, since $\mu > 2\nu-1$ and so
$\gamma$ is not the exceptional
weight $\deltac{\sfrac{1}{1-\mu}}{2}$, the matrix $P(\gamma)$ has at least $3$ non-zero anti-diagonals and
all terms in the product are non-zero. Thus
\begin{align*} \PL_{1,n-2}\PL_{n-2,2}\PL_{2,n-1} &= P(\gamma)_{1,n-2}P(\gamma)_{n-2,2}P(\gamma)_{2,n-1} = q/P(\gamma)_{n-1,1} \\
\PL_{2,n-2}\PL_{n-2,1}\PL_{1,n-1} &= P(\gamma)_{2,n-2}P(\gamma)_{n-2,1}P(\gamma)_{1,n-1} = q/P(\gamma)_{n-1,2}.
\end{align*}
By Lemma~\ref{lemma:practical}(iv),  we have $\PL_{n-1,1} = \binom{n-1}{1}(\lambda_{n-2}-\lambda_{n-1})$
and $\PL_{n-1,2} = \binom{n-1}{2}(\lambda_{n-3}-2\lambda_{n-2} + \lambda_{n-1})$.
Again the analogous equation holds for $P(\gamma)$. Moreover, by the induction hypothesis and~\eqkey,
$\lambda_x = \DSL_{xx} = \DS(\gamma)_{xx} = \lambda_x(\gamma)$ for $x \in n-1$. Therefore
we obtain another pair of equations
\begin{align*}
\PL_{n-1,1} &= P(\gamma)_{n-1,1} - \binom{n-1}{1}\bigl( \lambda_{n-1} - \lambda_{n-1}(\gamma) \bigr) \\
\PL_{n-1,2} &= P(\gamma)_{n-1,2} + \binom{n-1}{2}\bigl( \lambda_{n-1} - \lambda_{n-1}(\gamma) \bigr).
\end{align*}
Writing $\Delta$ for $\lambda_{n-1} - \lambda_{n-1}(\gamma)$ and making the substitutions in~\eqref{eq:cycle1}
indicated by the four displayed equations above
we obtain
\[ \bigl( P(\gamma)_{n-1,1} - (n-1) \Delta \bigr) \frac{q}{P(\gamma)_{n-1,1}} = \bigl( P(\gamma)_{n-1,2}
+ \binom{n-1}{2}\Delta \bigr) \frac{q}{P(\gamma)_{n-1,2}}. \]
Using that $P(\gamma)_{n-1,z} = \gamma_{[z^\star,n-1]}/N(\gamma)_{n-1}$ and cancelling all common terms, including
$N(\gamma)_{n-1}$, we obtain
\[ -(n-1) \frac{\Delta}{\gamma_{[n-2,n-1]}} = \binom{n-1}{2} \frac{\Delta}{\gamma_{[n-3,n-1]}} \]
or equivalently
\begin{equation}\label{eq:cycle2}
\bigl( (n-1) \gamma_{[n-3,n-1]} + \binom{n-1}{2} \gamma_{[n-2,n-1]} \bigr) \Delta = 0.
\end{equation}
If $\gamma$ is the weight $\gammai{c}$ then $\gammai{c}_{[y,n-1]} = \binom{n-1}{y} c^{n-1-y}$
and so~\eqref{eq:cycle2} becomes
\smash{$(n-1)\binom{n-1}{2}c \bigl(c-1) \Delta = 0$}.
Since $c > 1$, we have $\Delta = 0$. Hence $\lambda_{n-1} = \lambda(\gammai{c})_{n-1}$ and by~\eqkey\ we get (ii)
in Proposition~\ref{prop:reversibleImpliesGamma}.
If $\gamma$ is the weight $\gammac{a}{b}$ then $\gammac{a}{b}_{[y,n-1]} = \binom{a+y}{y} \binom{b+n-1-y}{n-1-y}$
and so~\eqref{eq:cycle2} becomes
\[ \Bigl( (n-1) \binom{a+n-3}{n-3}\binom{b+2}{2}  + \binom{n-1}{2} \binom{a+n-2}{n-2}\binom{b+1}{1} \Bigr) \Delta = 0. \]
Using $\binom{m+1}{c+1} = \binom{m}{c} \frac{m+1}{c+1}$ this simplifies to
\[ \mfrac{1}{2} \bigl( (n-1) \binom{a+n-3}{n-3} \binom{b+1}{1} \Bigl( \frac{b+2}{2} + \frac{n-2}{2} \frac{a+n-2}{n-2}\Bigr)
\Delta = 0,\] 
and hence to $(a+b+n) \Delta = 0$. Since $a$, $b \in \Rg$ and $n \ge 4$, again we get $\Delta = 0$, and 
by~\eqkey\ we get~(i).
Finally suppose that $\gamma$ is the weight
$\deltac{a'}{b'}$ and so $\gamma_{[y,n-1]} = \binom{a'-1}{y}\binom{b'-1}{n-1-y}$.
Substituting in~\eqref{eq:cycle2} we get
\[ \Bigl( (n-1) \binom{a'-1}{n-3}\binom{b'-1}{2} + \binom{n-1}{2} \binom{a'-1}{n-2}\binom{b'-1}{1} \Bigr)\Delta =0 \]
which simplifies by $\binom{m}{c+1} = \binom{m}{c} \mfrac{m-c}{c+1}$ to
\[ (n-1) \binom{a'-1}{n-3} \binom{b'-1}{1} \Bigl( \frac{b'-2}{2} + \frac{n-2}{2} \frac{a'-n+2}{n-2} \Bigr) \Delta = 0 \]
and hence to $(a'+b'-n)\Delta = 0$. 
In either inductive case we have $a' \ge n-1$ and $b' \ge 2$, hence $a'+b' - n > 0$ and so $\Delta = 0$.
By~\eqkey\ we get $\PL = P(\deltac{a'}{b'})$. 

Since $0$ is accessible from every state, 
Lemma~\ref{lemma:deltaDomainSimplified} implies that $n \le \lceil a' \rceil$. 
If the inductive case is~(iv) then~(iv) still holds: $\mu > \mfrac{1}{2}$,
$\nu = \nu_m(\mu)$ and $a' = a'_m(\mu)$ are inductive assumptions, and $m \ge \lfloor \mfrac{1-\mu}{\mu} (n-2) \rfloor + 2$
and $n \le \lceil a'_m(\nu) \rceil$ follow from Lemma~\ref{lemma:deltaDomainSimplified}.
Finally suppose the inductive case is~(iii), so $\nu_{n-2}(\mu) < \nu < \mu^2$. If we have the stronger inequality
$\nu_{n-1}(\mu) < \nu$ then~(iii) still holds. In the remaining case (iii) holds with $\nu_{n-2}(\mu) < \nu \le \nu_{n-1}(\mu)$.
Since $b'_\mu$ is an increasing function of $\nu$, we have $n-2 < b' \le n-1$. If $n-2 < b' < n-1$ then
\smash{$\deltac{a'}{b'}_{[0,n-1]} = \binom{b'-1}{n-1} < 0$} contradicts that $P(\deltac{a'}{b'})$ is equal to the stochastic
matrix $\PL$. Therefore $b' = n-1$, $\nu = \nu_{n-1}(\mu)$ and $a' = a'_{n-1}(\mu)$. (This is the case where
$P(\deltac{a'_{n-1}(\mu)}{n-1})$ has $n-1$ non-zero anti-diagonal bands and a zero in its bottom right corner.)
By Lemma~\ref{lemma:deltaDomainSimplified}, $n-1 > \mfrac{1-\mu}{\mu}n + 2 - \mfrac{1}{\mu}$. If $\mu \le \mfrac{1}{2}$ then
the right-hand side is at least $n$, a contradiction. Therefore $\mu > \mfrac{1}{2}$ and we have shown that (iv) holds.
This completes the proof.

\section{The involutive walk on $[0,1]$}\label{sec:Hilbert}

We now turn to involutive walks defined on the real interval 
$[0,1] = \{x \in \R : 0 \le x \le 1\}$ with its usual order
and anti-involution $x^\star = 1 -x$. From now on $[y,x]$ denotes a subinterval of $[0,1]$.

\begin{definition}\label{defn:realWeight}
A \emph{real weight} is a function $\gamma$ on the set of non-empty intervals of $[0,1]$
taking values in $\R^{\ge0}$ such that, for each $x \in [0,1]$,
the function $y \mapsto \gamma_{[y,x]}$ is continuous and non-zero almost everywhere in $[0,x]$,
and $\int_0^x \gamma_{[y,x]} \,\mathrm{d}y < \infty$.
\end{definition}

Thus a real weight defines a probability distribution on each interval $[0,x]$ having
probability density function $\gamma_{[y,x]}/N(\gamma)_x$ where
$N(\gamma)_x = \int_0^x \gamma_{[y,x]} \, \mathrm{d} y$. The positivity assumption in the definition
makes these weights the analogue of the strictly positive discrete weights in Definition~\ref{defn:weight}.
We define \emph{atomic} and \emph{$\star$-symmetric} for real weights as in this definition.
The $\gamma$-weighted involutive walk on $[0,1]$ is defined
as in Definition~\ref{defn:involutiveWalk}, replacing $\{0,1,\ldots,n-1\}$ with $[0,1]$.

\subsection{Invariant distributions}

\begin{proposition}\label{prop:invariantFactorReal}
Let $\alpha$ be an atomic real weight and let
$\beta$ be a $\star$-symmetric real weight.
Suppose that $\int_0^1 \alpha_{1-x} N(\alpha \beta)_x\,\mathrm{d}x < \infty$. 
Then the $\alpha\beta$-weighted involutive walk on $n$ is reversible with
respect to an invariant distribution proportional to $\alpha_{1-x} N(\alpha \beta)_x$.
\end{proposition}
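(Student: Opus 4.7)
The plan is to mirror the discrete argument (Theorem~\ref{thm:factorization}) in the continuous setting, with the discrete detailed balance equation~\eqref{eq:db} replaced by its continuous analogue. Writing the transition density of the $\gamma$-weighted involutive walk on $[0,1]$ as $p_\gamma(x,z) = \gamma_{[1-z,x]}/N(\gamma)_x$ (zero unless $x+z \ge 1$), the continuous detailed balance equations are $\pi(x) p_\gamma(x,z) = \pi(z) p_\gamma(z,x)$, or equivalently, after the substitution $z = 1-y$,
\begin{equation}\label{eq:realDB}
\pi(x) \frac{\gamma_{[y,x]}}{N(\gamma)_x} = \pi(1-y) \frac{\gamma_{[1-x,1-y]}}{N(\gamma)_{1-y}}
\qquad (0 \le y \le x \le 1).
\end{equation}

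First I would prove the two analogues of Lemma~\ref{lemma:invariant}. If $\alpha$ is atomic, so that $\alpha_{[y,x]} = \alpha_y$, then substituting $\pi(x) = \alpha_{1-x}N(\alpha)_x$ into~\eqref{eq:realDB} gives
\[
\alpha_{1-x} \alpha_y = \alpha_y \alpha_{1-x},
\]
which holds trivially; hence the $\alpha$-weighted walk is reversible with invariant measure $\alpha_{1-x}N(\alpha)_x \,\mathrm{d}x$. If $\beta$ is $\star$-symmetric, then $\beta_{[y,x]} = \beta_{[1-x,1-y]}$ and~\eqref{eq:realDB} reduces to showing that $\pi(x)/N(\beta)_x$ is invariant under $x \mapsto 1-y$ whenever the two endpoints define a nonempty interval; this is satisfied by $\pi(x) = N(\beta)_x$, again giving reversibility.

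Next I would prove the continuous analogue of Lemma~\ref{lemma:pointwiseProductReversible}: if the $\alpha$- and $\beta$-weighted walks are reversible with (not necessarily finite) invariant measures proportional to $\theta(x)\,\mathrm{d}x$ and $\phi(x)\,\mathrm{d}x$, then the $\alpha\beta$-weighted walk satisfies~\eqref{eq:realDB} with
\[
\pi(x) = \theta(x)\phi(x)\frac{N(\alpha\beta)_x}{N(\alpha)_x N(\beta)_x}.
\]
This is a direct multiplication of the two detailed balance identities, using that $(\alpha\beta)_{[y,x]} = \alpha_{[y,x]}\beta_{[y,x]}$.

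Combining the three displays above yields
\[
\pi(x) = \alpha_{1-x} N(\alpha)_x \cdot N(\beta)_x \cdot \frac{N(\alpha\beta)_x}{N(\alpha)_x N(\beta)_x} = \alpha_{1-x} N(\alpha\beta)_x,
\]
as claimed. The integrability hypothesis $\int_0^1 \alpha_{1-x}N(\alpha\beta)_x\,\mathrm{d}x < \infty$ is exactly what is needed to normalise this measure to a probability density. The only nontrivial verification is that the measure-theoretic details of detailed balance go through correctly---that is, that the pointwise identities on densities imply reversibility as an identity of measures. I expect this to be the main technical obstacle, but it is routine since Definition~\ref{defn:realWeight} guarantees that the relevant densities $\gamma_{[y,x]}/N(\gamma)_x$ are continuous and almost everywhere positive in $y$, so that~\eqref{eq:realDB} extended by the convention that both sides vanish when $x+z < 1$ furnishes a genuine reversible kernel with respect to $\pi(x)\,\mathrm{d}x$.
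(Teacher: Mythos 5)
Your proposal is correct and is exactly the argument the paper intends: its proof of this proposition simply states that one solves the continuous detailed balance equation by repeating the arguments of Lemma~\ref{lemma:invariant} and Lemma~\ref{lemma:pointwiseProductReversible}, which is precisely what you carry out. Your computations for the atomic, $\star$-symmetric and product cases all check out, and your closing remark about the role of the integrability hypothesis matches the paper's use of it.
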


\begin{proof}
It is routine to solve the detailed balance equation using essentially the same
arguments used to prove Lemma~\ref{lemma:invariant} and Lemma~\ref{lemma:pointwiseProductReversible}.
\end{proof}

The integrability assumption in the lemma is essential.

\begin{example}\label{ex:realWeight}
Let $a \in \R$ with $a > 0$. 
Let $\alpha_{[y,x]} = 1/y^a(1-y)^{a+1}$ and let $\beta_{[y,x]} = y^a (1-x)^a$ for $0 < y \le x$.
Observe that $\alpha$ and $\beta$ are real weights and that $\alpha$ is atomic and $\beta$ is $\star$-symmetric. 
Let $\gamma = \alpha\beta$, so $\gamma_{[y,x]} = (1-x)^a/(1-y)^{a+1}$.
We have
\[ N(\gamma)_x = \int_0^x \frac{(1-x)^a}{(1-y)^{a+1}} \, \mathrm{d} y 
= \frac{1}{a} \bigl( 1- (1-x)^{a} \bigr)\]
hence $\gamma$ is a real weight. 
However 
\[ \alpha_{1-x}N(\gamma)_{x} = \frac{1}{a x^{a+1}}\Bigl(\frac{1}{(1-x)^a} -1  \Bigr)
\]
is not integrable on any interval containing $1$, and not integrable on any interval
containing $0$ whenever $a \ge 1$.
\end{example}

For the remainder of this section, let $\theta : [0,1] \rightarrow \R^{\ge 0}$ be the function proportional
to the invariant distribution defined by $\theta_x = \alpha_{1-x} N(\alpha)_x$.
Note that, by Definition~\ref{defn:realWeight}, $\theta_x$ is non-zero almost everywhere.
Given Example~\ref{ex:realWeight}, we also assume that $\sqrt{\theta_x}$ is in the Hilbert space $\L^2[0,1]$
of square-integrable functions on $[0,1]$.

\subsection{Hilbert spaces}\label{subsec:Hilbert}
In this section we describe a setting in which the analogues of
left- and right-multiplication by the transition matrix for a discrete involutive
walk are compact linear operators on Hilbert space. 
Our~\S\ref{subsec:eigenvectors}
and \cite{BoydDiaconisParriloXiao} suggest the correct inner-product spaces in which to work.
Let $\alpha$ be an atomic real weight and let $\beta$ be a $\star$-symmetric real weight. Define
$K:[0,1]^2 \rightarrow \R$ by
\begin{equation}\label{eq:K} K(x,z) = [x+z\ge 1]\frac{\sqrt{\alpha_{1-z}}\sqrt{\alpha_{1-x}} \beta_{[1-z,x]}}{\sqrt{N(\alpha\beta)_{z}}
\sqrt{N(\alpha\beta)_x}} \end{equation}
where $[x+z \ge 1]$ is an Iverson bracket.
When
$K(x,z) \in \L^2\bigl( [0,1]^2 \bigr)$,
the
integral operator $M : \L^2([0,1]) \rightarrow \L^2([0,1])$ defined by
$(Mf)(x) = \int_0^1 K(x,z) f(z) \, \mathrm{d} z$ is compact.
(See for instance \cite[Theorem~8.8]{YoungHilbert}.)
Since $K(x,z) = K(z,x)$ for all $x$, $z \in [0,1]$, 
$M$ is also self-adjoint.

\begin{definition}\label{defn:H}
Let $\H_L = \{f(x) / \sqrt{\theta_x} : f \in \L^2([0,1])\}$ with inner product $\langle \ , \ \rangle_{\H_L}$
defined by $\langle f, g \rangle = \int_0^1 \theta_x f(x)g(x) \, \mathrm{d} x$.
Let $\H_R = \{f(x) \sqrt{\theta_x} : f \in \L^2([0,1])\}$ with inner product $\langle \ , \ \rangle_{\H_R}$ defined by 
 $\langle f, g \rangle = \int_0^1 f(x)g(x)/\theta_x \, \mathrm{d} x$.
\end{definition}

The maps sending $f \in \L^2([0,1])$ to $f/\sqrt{\theta_x} \in \H_L$ 
and $f\sqrt{\theta_x} \in \H_R$ are isometric isomorphisms from $\L^2([0,1])$ to
$\H_L$ and $\H_R$, respectively. (The first map is well-defined 
because the zeros of $\theta_x$ form
a null-set.) Therefore $\H_L$ and $\H_R$ are Hilbert spaces.
Moreover the constant function is in $\H_L$ and, by our integrability assumption, $\theta_x$ is in $\H_R$.
The analogues of left- and right-multiplication by the transition matrix are the integral operators
$L_P : \H_L \rightarrow \H_L$ and
$R_P : \H_R \rightarrow \H_R$ defined by
$(L_Pg)(x) = M(g(z) \sqrt{\theta_z})/\sqrt{\theta_x}$ for $g \in \H_L$
and $R_P(g)(z) = M(g(x) / \sqrt{\theta_x}) \sqrt{\theta_z}$ for $g \in \H_R$.
(Note we deliberately use different variables: for left-multiplication
we integrate over the final state~$z$; for right-multiplication we integrate
over the initial state~$x$.)
Equivalently,
\begin{align} (L_Pg)(x) &= 
\int_{1-x}^1 \frac{\alpha_{1-z} \beta_{[1-z,x]}}{N(\alpha\beta)_x}  g(z)\, \mathrm{d} z 
\label{eq:LP} \\
(R_Pg)(z) &= \int_{1-z}^1 \frac{\alpha_{1-z} \beta_{[1-z,x]}}{N(\alpha\beta)_x}g(x) \,
\mathrm{d} x. \label{eq:RP} \end{align}
Since $L_P$ and $R_P$ are conjugate by isometric isomorphisms to the self-adjoint compact operator $M$,
they are also self-adjoint and compact, and have the same eigenvalues.

We can now give a convenient sufficient condition for the real involutive walk to converge. 
The proof uses the generalization of the Perron--Frobenius Theorem to operators on Banach spaces. 
Let $\pi_x = C \theta_x$ be the invariant probability distribution given by Proposition~\ref{prop:invariantFactorReal}.

\begin{proposition}\label{prop:ergodicReal}
The invariant distribution $\pi_x$ 
is unique and
the $\alpha\beta$-weighted involutive walk started at any probability
distribution $\theta \in \H_R$ converges
to $\pi_x \in \H_R$.
\end{proposition}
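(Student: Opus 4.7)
The plan is to apply the spectral theorem for compact self-adjoint operators to $R_P$ on $\H_R$, combined with a Perron--Frobenius/Krein--Rutman argument to pin down the leading eigenvalue. First I would verify that $\pi = C\theta \in \H_R$ satisfies $R_P \pi = \pi$: substituting into~\eqref{eq:RP} and rearranging using the detailed-balance identity that underlies Proposition~\ref{prop:invariantFactorReal} shows that $\pi$ is an eigenvector with eigenvalue~$1$. In particular $1$ lies in the spectrum of $R_P$.

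Next, self-adjointness and compactness of $R_P$, established just before the proposition, furnish an orthonormal basis $(e_k)_{k\ge 0}$ of $\H_R$ of eigenvectors with real eigenvalues $(\lambda_k)_{k\ge 0}$ whose only possible accumulation point is $0$. The crucial step is to show that $\|R_P\| = 1$, that the $1$-eigenspace is spanned by $\pi$, and that $-1$ is not an eigenvalue. The norm bound comes from transporting $R_P$ via the isometric isomorphism $\H_R \to \L^2([0,1])$ to the operator $M$ of~\eqref{eq:K}: $M$ arises from a Markov transition kernel (stochasticity translates into preservation of the paired integral against $\sqrt{\theta_x}$), so its spectral radius is at most~$1$, and self-adjointness then forces $\|R_P\| = 1$.

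For simplicity of the $1$-eigenspace and the exclusion of $-1$, I would appeal to a Krein--Rutman argument. The kernel $K(x,z)$ in~\eqref{eq:K} is strictly positive on $\{(x,z) : x+z \ge 1\}$ by the positivity assumptions on $\alpha$ and $\beta$; convolving $K$ with itself over an intermediate state shows that the kernel of $R_P^2$ is strictly positive almost everywhere on $[0,1]^2$. This primitivity makes $R_P^2$ a positive compact operator with a simple top eigenvalue, which forces the $1$-eigenspace of $R_P$ to be one-dimensional and excludes $-1$ from its spectrum. Uniqueness of $\pi$ follows.

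Finally, for convergence, decompose the initial probability distribution $\theta_0$ (renamed to avoid clashing with the earlier $\theta$) in the orthonormal basis as $\theta_0 = c_0 \pi / \|\pi\|_{\H_R} + \sum_{k \ge 1} c_k e_k$. The coefficient $c_0$ is pinned by pairing $\theta_0$ against the constant function $1 \in \H_L$ under the natural $\H_L$--$\H_R$ duality, using $L_P 1 = 1$ and the fact that $R_P^t \theta_0$ remains a probability distribution for all $t$. The remaining modes evolve as $\lambda_k^t e_k$ with $|\lambda_k| < 1$, so Parseval's identity gives
\[ \|R_P^t \theta_0 - \pi\|_{\H_R}^2 = \sum_{k \ge 1} c_k^2 \lambda_k^{2t} \longrightarrow 0 \]
as $t \to \infty$. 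The main obstacle will be the Krein--Rutman step: verifying strict positivity of the iterated kernel almost everywhere on $[0,1]^2$ (translating discrete-style accessibility into the integral-kernel setting) and invoking the theorem in the weighted $\L^2$ Hilbert-space framework rather than its more standard $C(X)$ formulation.
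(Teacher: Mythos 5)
Your proposal is correct and follows essentially the same route as the paper: a Krein--Rutman/Perron--Frobenius positivity argument to show that $1$ is the simple top eigenvalue of the compact self-adjoint operator $R_P$ with eigenfunction $\pi$, followed by expansion of the initial distribution in the resulting orthonormal eigenbasis to get convergence. Your refinement of applying the positivity argument to the iterated kernel of $R_P^2$ (rather than claiming, as the paper does, that $R_P$ maps the non-negative cone into its interior, which is delicate in an $\L^2$-type space where that cone has empty interior) is a sensible technical improvement, but not a different method.
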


\begin{proof}
The condition in Definition~\ref{defn:realWeight} that $\gamma_{[y,x]} > 0$ for almost
all $y \in [0,x]$ implies that if $C$ is the cone in $\mathcal{H}_R$ of non-negative functions
then $R_P : C \backslash \{0\} \rightarrow C^\circ$, where $C^\circ$ is the interior of $C$.
Hence, by the 
Krein--Rutman Theorem (see \cite[Theorem 6.3]{KreinRutman}), 
$1$ is the unique largest eigenvalue  (in modulus) of $R_P$,
and $\pi_x$ is the unique $1$-eigenfunction. Since $R_P$ is compact and self-adjoint,
there is a complete orthonormal basis of $\H_R$ of eigenfunctions; expanding $\theta$ in this
basis and then applying $R_P^t$ shows that $R_P^t \theta$ converges, in $\H_R$, to the invariant
distribution $\pi$, as $t \rightarrow \infty$.
\end{proof}

\subsection{Trigonometric example}\label{subsec:trigonometricExample}

As an illustrative example, to which we return in the final subsection, we take 
$\alpha_x = \sin \pi x$ for $x \in [0,1]$ and $\beta_{[y,x]} = 1$ for each non-empty interval
$[y,x]$. Note that $\alpha$ and $\beta$ are real weights, $\alpha$ is atomic and $\beta$ is symmetric. 
We have $N(\alpha)_x = \int_0^x \sin \pi y \,\mathrm{d} y = 
\mfrac{1}{\pi}(1-\cos \pi x)$. 
The hypothesis for Proposition~\ref{prop:invariantFactorReal}, that $\sqrt{\theta_x} \in \L^2[(0,1)]$ holds.
This proposition implies that the
$\alpha$-weighted involutive walk is reversible with invariant distribution proportional to
$\theta_x$ where $\theta_x = \alpha_{1-x}N(\alpha)_x = 
\mfrac{1}{\pi} (\sin \pi x) (1-\cos\pi x) = \mfrac{1}{\pi} \sin \pi x - \mfrac{1}{2\pi} \sin 2\pi x$.
Integrating over $[0,1]$, the first summand gives $\frac{2}{\pi^2}$ and the second $0$, hence
$\pi_x = \mfrac{\pi}{2} (\sin \pi x)(1-\cos \pi x)$ is an invariant distribution.
The kernel $K$ in~\eqref{eq:K} satisfies
\[ K(x,z)^2 = [x+z \ge 1] \frac{\sin \pi z \sin \pi x}{(1-\cos \pi x) (1-\cos \pi z)}. \]
Using $\sin \alpha / (1-\cos \alpha) = \cot \mfrac{\alpha}{2}$, this
simplifies to $K(x,z)^2 = [x+z \ge 1] \cot \mfrac{\pi}{2} z \cot \mfrac{\pi}{2} x$. We have
\[ \int_{(x,z) \in [0,1]^2 \atop x + z \ge 1 }  \cot \mfrac{\pi}{2} z \cot \mfrac{\pi}{2} x \, \mathrm{d}z \,\mathrm{d}x
= \int_0^1 \!\!\int_0^1 x\tan \mfrac{\pi}{2}xu \cot \mfrac{\pi}{2} x \, \mathrm{d}u \, \mathrm{d}x
\]
by the substitution $1-z = xu$. 
Since $x \tan \mfrac{\pi}{2} xu  \cot \mfrac{\pi}{2} x
\le x \tan \mfrac{\pi}{2} x \cot \mfrac{\pi}{2} x = x$, the integrand
is bounded. (In fact it is even continuous, except at $(1,1)$.)
Therefore $K \in \L^2\bigl( [0,1]^2 \bigr)$ and the general theory in the previous subsection applies.
In particular, we may  apply Proposition~\ref{prop:ergodicReal} to conclude that $\pi_x$
is the unique invariant distribution for the $\alpha$-weighted
involutive walk on $[0,1]$, and that the process converges, in the Hilbert space sense, to the invariant
distribution $\pi_x$.

We now find the spectra of $L_P$ and $R_P$. 
In~\S\ref{sec:gammaInfinityWalk} and \S\ref{sec:gamma} we used right-eigenvectors,
corresponding to left-multiplication by the transition matrix, and again here it
is most convenient to work with $L_P : \H_L \rightarrow \H_L$, defined as in~\eqref{eq:LP} by
\[ (L_Pg)(x) = \pi \int_{1-x}^1 \frac{\sin \pi z }{1-\cos \pi x} g(z) \, \mathrm{d} z,\]
where $\H_L = \{f(x)/\sqrt{\theta_x} : f \in \L^2([0,1])\}$.
For $n \in \N_0$, let $V_n$ be the $n$-dimensional subspace of~$\H_L$ 
spanned by $1, \cos \pi x$, $\cos 2\pi x$, $\ldots$, $\cos \pi(n-1)x$
and let $V_\infty = \bigcup_{n=0}^\infty V_n$. 
The image of multiplication by $\sqrt{\pi_x}$ on $\L^2([0,1])$ is dense in $\L^2([0,1])$.
(For example, it contains any step function supported on a closed subinterval of $(0,1)$.)
Since $\sqrt{2}, \sqrt{2}\cos \pi x, \sqrt{2}\cos 2\pi x, \ldots $ is an orthonormal 
Hilbert space basis of $\L^2([0,1])$,
the set $\sqrt{\pi_x}, \sqrt{\pi_x} \cos \pi x$, $\sqrt{\pi_x} \cos 2\pi x$, $\ldots $
is dense in $\L^2([0,1])$. Hence, dividing by $\sqrt{\pi_x}$, we 
see that $V_\infty$ is dense in $\H_L$.


\begin{lemma}\label{lemma:cosAlt}
For any $d \in \N_0$ we have
\[ L_P (\cos^d \hskip-0.5pt\pi x) \in \frac{(-1)^d}{d+1} \cos^d \hskip-0.5pt\pi x + V_d. \]
\end{lemma}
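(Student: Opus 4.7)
The plan is to compute $L_P(\cos^d \pi x)$ directly via the substitution $u = \cos \pi z$. Using the explicit form of $L_P$, we have
\[ L_P(\cos^d \pi x) = \frac{\pi}{1 - \cos \pi x} \int_{1-x}^1 \sin(\pi z) \cos^d(\pi z) \, \mathrm{d} z. \]
Setting $u = \cos \pi z$, so $\mathrm{d}u = -\pi \sin(\pi z)\, \mathrm{d}z$, and noting that $z = 1-x$ gives $u = -\cos \pi x$ while $z = 1$ gives $u = -1$, the integral evaluates to an elementary integral of $u^d$.

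Carrying out this evaluation yields
\[ \int_{1-x}^1 \sin(\pi z) \cos^d(\pi z)\, \mathrm{d}z = \frac{(-1)^{d+1}}{\pi(d+1)}\bigl( \cos^{d+1}\pi x - 1 \bigr), \]
so that
\[ L_P(\cos^d \pi x) = \frac{(-1)^d}{d+1} \cdot \frac{1 - \cos^{d+1}\pi x}{1 - \cos \pi x}. \]
The key observation is that the denominator $1 - \cos \pi x$ now cancels via the geometric series identity $(1 - u^{d+1})/(1-u) = 1 + u + u^2 + \cdots + u^d$, giving
\[ L_P(\cos^d \pi x) = \frac{(-1)^d}{d+1} \sum_{k=0}^d \cos^k \pi x = \frac{(-1)^d}{d+1} \cos^d \pi x + \frac{(-1)^d}{d+1} \sum_{k=0}^{d-1} \cos^k \pi x. \]

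To finish, I would observe that the remainder lies in $V_d$: since $\cos k \pi x = T_k(\cos \pi x)$ where $T_k$ is the degree-$k$ Chebyshev polynomial of the first kind, the functions $1, \cos \pi x, \cos 2\pi x, \ldots, \cos(d-1)\pi x$ form a basis of the space of polynomials of degree at most $d-1$ in $\cos \pi x$. The remainder $\sum_{k=0}^{d-1} \cos^k \pi x$ is such a polynomial and hence belongs to $V_d$. There is no serious obstacle here; the whole proof is a direct computation, and the only ``insight'' required is recognising that the prefactor $1/(1-\cos\pi x)$ is exactly what is needed to apply the geometric series identity after the substitution.
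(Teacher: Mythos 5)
Your proposal is correct and follows essentially the same route as the paper: compute the integral exactly (the paper writes down the antiderivative $-\cos^{d+1}\pi z/\pi(d+1)$ directly, which is the same calculation as your substitution $u=\cos\pi z$), cancel $1-\cos\pi x$ via the geometric series identity, and note that the remaining terms lie in $V_d$. Your explicit Chebyshev-polynomial justification that $\langle 1,\cos\pi x,\ldots,\cos^{d-1}\pi x\rangle = V_d$ is a point the paper only makes implicitly, so it is a welcome addition rather than a deviation.
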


\begin{proof}
We have
\begin{align*}
L_P (\cos^d \hskip-0.5pt \pi x) 
&= \frac{\pi}{1-\cos \pi x} \int_{1-x}^1 \sin \pi z \cos^d \hskip-0.5pt \pi z \, \mathrm{d} z
\\
&= \frac{\pi}{1-\cos \pi x} \frac{-1}{\pi (d+1)} (\cos^{d+1}\hskip-0.5pt \pi z) \Bigr|_{1-x}^1 \\
&= \frac{-1}{d+1} \frac{(-1)^{d+1}- \cos^{d+1} \pi (1-x)}{1- \cos \pi x} \\
&= \frac{(-1)^d}{d+1} \frac{1- \cos^{d+1} \pi x}{1- \cos \pi x} \\
&= (-1)^d \frac{1 + \cos \pi x + \cdots + \cos^d \hskip-0.5pt\pi x}{d+1} 
\end{align*}
as required.
\end{proof}

Since $\langle 1, \cos \pi x, \ldots, \cos^d \pi x \rangle = \langle 1, \cos \pi x, \ldots, \cos d \pi x \rangle$
for each $d$, it follows that
$L_P$ has eigenvalues $(-1)^{d}/(d+1)$ for $d \in \N_0$. Moreover, since $V_\infty$ is dense in $\mathcal{H}_L$, the eigenfunctions form a complete
orthonormal basis for~$\mathcal{H}_L$. The convergence of the involutive walk is determined by the second largest
eigenvalue (in modulus), namely $-1/2$.



\begin{remark}
We remark that a similar method to Lemma~\ref{lemma:cosAlt} gives
part of the spectrum of $L_P$ whenever $\alpha_{1-x} = \alpha_x$ for all $x \in [0,1]$ and so
$\alpha_{1-z} N(\alpha)^d_z$ is the derivative of $\mfrac{1}{d+1} N(\alpha)^{d+1}_z$.
But one cannot expect in general that the eigenfunctions obtained in this way will
be complete for the Hilbert space~$\H_L$.
\end{remark}

%
%

%

\section{The polynomially-weighted involutive walk on the interval}\label{sec:intervalWalk}

Fix, throughout this section, $a$, $b \in \N_0$.
Observe that the weight
$\gammac{a}{b} = \binom{y+a}{y} \binom{b+x-y}{x-y}$ 
defined in~\eqref{eq:gamma}
is asymptotically proportional to $y^a (x-y)^b$ when $y$ and $x-y$ are both large.
Let $\kappac{a}{b}$ be the real weight defined by
\smash{$\kappac{a}{b}_{[y,x]} = y^a (x-y)^b$}.
In this section we use the theory from \S\ref{sec:Hilbert} to
 show that the $\kappac{a}{b}$-weighted involutive walk, is, in a precise sense,
the continuous limit of the $\gammac{a}{b}$-involutive walk. In particular, it has a discrete
spectrum and its eigenvalues
are given by letting~$n$ tend to infinity in Theorem~\ref{thm:spectrum}.
We end with some final remarks comparing the discrete and continuous cases.

\subsection{Polynomial weights}

We begin with the analogues of Lemmas~\ref{lemma:mbinom} and~\ref{lemma:chainWeightsTotal}.
Let $P(\kappac{a}{b})_{xz}$ be the probability density of a step from $x$ to $z$.
Part (i) below is a standard result related to the beta function which we prove to make the paper more self-contained.

\begin{lemma}\label{lemma:ctsBasic} We have
\begin{thmlist}
\item $\int_0^1 w^a (1-w)^b \d w = (a+b+1)^{-1} \binom{a+b}{a}^{-1}$;
\item $N(\kappac{a}{b})_x = x^{a+b+1}(a+b+1)^{-1} \binom{a+b}{b}^{-1}$;
\item $\displaystyle
P(\kappac{a}{b})_{xz} = [x+z \ge 1](a+b+1)\binom{a+b}{b} \frac{(1-z)^a(x+z-1)^b}{x^{a+b+1}}$
where $[x+z \ge 1]$ is an Iverson bracket.
\end{thmlist}
\end{lemma}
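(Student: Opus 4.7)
The plan is to prove the three parts in order, with (i) doing all the real work and (ii)--(iii) being immediate consequences.

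For (i), I would prove the standard beta-integral evaluation by induction on $b$ with $a$ arbitrary. The base case $b=0$ gives $\int_0^1 w^a \d w = 1/(a+1)$, which matches $(a+1)^{-1}\binom{a}{a}^{-1}$. For the inductive step I would integrate by parts taking $u = (1-w)^b$ and $\d v = w^a \d w$, so that $v = w^{a+1}/(a+1)$ and the boundary term vanishes at both endpoints, yielding
\[ \int_0^1 w^a(1-w)^b \d w = \frac{b}{a+1} \int_0^1 w^{a+1}(1-w)^{b-1} \d w. \]
The inductive hypothesis gives the right-hand side as $\tfrac{b}{a+1}(a+b+1)^{-1}\binom{a+b}{a+1}^{-1}$, and the identity $\binom{a+b}{a+1} = \tfrac{b}{a+1}\binom{a+b}{a}$ then collapses this to $(a+b+1)^{-1}\binom{a+b}{a}^{-1}$, completing the induction.

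For (ii), I would substitute $y = xw$ in the integral defining $N(\kappac{a}{b})_x$, obtaining
\[ N(\kappac{a}{b})_x = \int_0^x y^a(x-y)^b \d y = x^{a+b+1}\int_0^1 w^a(1-w)^b \d w, \]
and then apply (i), using $\binom{a+b}{a} = \binom{a+b}{b}$.

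For (iii), I would just unwind the definition. The $\kappac{a}{b}$-weighted walk at $x$ chooses $y \in [0,x]$ with density $\kappac{a}{b}_{[y,x]}/N(\kappac{a}{b})_x$ and steps to $1-y$; setting $z = 1-y$ (so $y = 1-z$, and the constraint $y \le x$ becomes $x+z \ge 1$) gives a step density
\[ P(\kappac{a}{b})_{xz} = [x+z \ge 1]\frac{(1-z)^a(x+z-1)^b}{N(\kappac{a}{b})_x}, \]
and substituting the formula from (ii) yields the claim.

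No step presents a genuine obstacle: the only slightly delicate point is getting the binomial identity $\binom{a+b}{a+1} = \tfrac{b}{a+1}\binom{a+b}{a}$ the right way round in the induction of (i), and keeping track of the change of variables $y \mapsto 1-z$ in (iii), which is where the Iverson bracket $[x+z \ge 1]$ enters.
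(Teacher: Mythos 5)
Your proof is correct, and parts (ii) and (iii) coincide with the paper's argument (the same substitution $y = xw$ for (ii), and the same unwinding of the definition with $y = 1-z$ for (iii)). The only genuine divergence is in part (i): you evaluate the beta integral by induction on $b$ via integration by parts, whereas the paper expands $(w + (1-w)t)^{a+b}$ by the binomial theorem, integrates over $w$ to get $\frac{1}{a+b+1}(1 + t + \cdots + t^{a+b})$, and compares coefficients of $t^b$ — a generating-function argument that produces all the values $\int_0^1 w^{a+b-j}(1-w)^j \d w$ for $0 \le j \le a+b$ simultaneously. Both methods are standard and equally rigorous for $a,b \in \N_0$ (which is all the lemma claims, since $a$ and $b$ are fixed in $\N_0$ throughout the section); your inductive step correctly uses the identity $\binom{a+b}{a+1} = \frac{b}{a+1}\binom{a+b}{a}$ and the base case $\binom{a}{a}=1$. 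If anything, your integration-by-parts reduction has the small advantage of extending verbatim to real $a > -1$ (with integer $b$), which is relevant to the paper's closing remark that the results extend to $a, b \in \R^{>-1}$ via the Beta function, though neither proof covers non-integer $b$.
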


\begin{proof}
By integrating $(w + (1-w)t)^c = \sum_{b=0}^c \binom{c}{b} w^{c-b} (1-w)^b t^b$ over $w$ we get
\[ \frac{1-t^{c+1}}{(c+1)(1-t)} = \sum_{b=0}^c \binom{c}{b}t^b \int_0^1 w^{c-b} (1-w)^b \d y. \]
Since the left-hand side is $\frac{1}{c+1}(1+t + \cdots + t^c)$,  (i)
follows by setting $c=a+b$ and comparing coefficients of $t^b$.
Part (ii) follows from (i) using the change of variables $xw = y$ to write
$\int_0^x y^a (x-y)^b \d y = x^{a+b}\! \int_0^1 w^a (1-w)^b x \d w$.
Now, starting at~$x \in [0,1]$, we may step to $z \in [0,1]$ if and only if $1-z \le x$; hence 
\smash{$P(\kappac{a}{b})_{xz} = [1-z \le x] \kappac{a}{b}_{[1-z,x]}/N(\kappac{a}{b})_x$}.
Part (iii) now follows from the definition of $\kappac{a}{b}_{[1-z,x]}$ and (ii).
\end{proof}

Set $\pi_x = C (1-x)^a x^{a+b+1}$
where $C = (2a+b+2)\binom{2a+b+1}{a}$ is the normalization factor given by Lemma~\ref{lemma:ctsBasic}(i).

\begin{lemma}\label{lemma:ctsInvar}
The $\kappac{a}{b}$-interval involutive walk has unique invariant probability density function 
$\pi_x$ and is reversible.
\end{lemma}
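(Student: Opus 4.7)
The plan is to apply Proposition~\ref{prop:invariantFactorReal} for existence and reversibility, and Proposition~\ref{prop:ergodicReal} for uniqueness. The essential observation is that $\kappa^{(a,b)}$ factorizes as $\alpha\beta$ with $\alpha_y = y^a$ atomic and $\beta_{[y,x]} = (x-y)^b$ $\star$-symmetric, the latter because $\beta_{[1-x,1-y]} = ((1-y)-(1-x))^b = (x-y)^b$. Both $\alpha$ and $\beta$ are real weights in the sense of Definition~\ref{defn:realWeight}: continuity, almost-everywhere positivity on $[0,x]$, and integrability of polynomial integrands are all immediate. This is the natural continuous analogue of the factorization $\gamma^{(a,b)} = \alpha^{(a)}\beta^{(b)}$ used in~\S\ref{subsec:gammaMatrices}.

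Next, I would check the integrability hypothesis of Proposition~\ref{prop:invariantFactorReal}. By Lemma~\ref{lemma:ctsBasic}(ii) the candidate invariant $\alpha_{1-x}N(\alpha\beta)_x$ is a positive constant times $(1-x)^a x^{a+b+1}$; Lemma~\ref{lemma:ctsBasic}(i) applied with exponents $a+b+1$ and $a$ gives $\int_0^1(1-x)^a x^{a+b+1}\d x = 1/((2a+b+2)\binom{2a+b+1}{a})$, which is finite. Proposition~\ref{prop:invariantFactorReal} then says the walk is reversible with respect to the normalized density $(2a+b+2)\binom{2a+b+1}{a}(1-x)^ax^{a+b+1}$, which is $\pi_x$ as claimed.

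Finally, for uniqueness I would invoke Proposition~\ref{prop:ergodicReal}, whose hypothesis is that the kernel $K$ of \eqref{eq:K} lies in $\L^2([0,1]^2)$. For our weights
\[ K(x,z)^2 \,\propto\, [x+z\ge 1]\,\frac{(1-x)^a(1-z)^a(x+z-1)^{2b}}{x^{a+b+1}z^{a+b+1}}, \]
and the only region of concern is the corner $(x,z)=(1,0)$, where $z^{a+b+1}$ in the denominator and $(x+z-1)^{2b}$ in the numerator both vanish. The substitution $1-z = xv$ turns the integral over the region $\{x+z\ge 1\}$ into a constant multiple of
\[ \int_0^1\!\!\int_0^1 \frac{x^b (1-x)^a v^a (1-v)^{2b}}{(1-xv)^{a+b+1}}\d v\d x, \]
whose only remaining singularity is at $(x,v)=(1,1)$; blowing this up with $s=1-x$, $t=1-v$ and polar-like coordinates $(s,t)=(rp,r(1-p))$ reduces the local integrand to $r^b$ times bounded factors of $p$, which is integrable near $r=0$. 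Hence $K\in \L^2([0,1]^2)$, the operator $M$ is compact and self-adjoint, and Proposition~\ref{prop:ergodicReal} yields uniqueness. The main obstacle is this integrability verification; the rest is a direct invocation of the general Hilbert-space framework developed in~\S\ref{subsec:Hilbert}.
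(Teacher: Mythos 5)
Your proposal is correct, and its first two paragraphs are exactly the paper's argument: factorize $\kappac{a}{b}$ as $\alpha\beta$ with $\alpha_y = y^a$ atomic and $\beta_{[y,x]} = (x-y)^b$ $\star$-symmetric, check the integrability hypothesis via Lemma~\ref{lemma:ctsBasic}, and invoke Proposition~\ref{prop:invariantFactorReal}; the paper's own proof is literally the one line ``immediate from Proposition~\ref{prop:invariantFactorReal}''. Where you go beyond the paper is on uniqueness: Proposition~\ref{prop:invariantFactorReal} by itself yields only reversibility and \emph{an} invariant density, and the paper leaves the uniqueness claim resting implicitly on the standing assumptions of \S\ref{subsec:Hilbert} (it verifies $K \in \L^2([0,1]^2)$ explicitly only for the trigonometric example, not for $\kappac{a}{b}$). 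Your verification that
\[
K(x,z)^2 \,\propto\, [x+z\ge 1]\,\frac{(1-x)^a(1-z)^a(x+z-1)^{2b}}{x^{a+b+1}z^{a+b+1}}
\]
is integrable --- via the substitution $1-z=xv$ and the blow-up $s=rp$, $t=r(1-p)$ at the corner, giving a local integrand of order $r^b$ --- is correct (one can also bound it directly, using $(x+z-1)^{2b}\le z^{2b}$ and $\int_{1-z}^1(1-x)^a\d x = z^{a+1}/(a+1)$ near $(1,0)$, to get an integrand of order $z^b$ in the $z$-marginal), and it is what legitimises the appeal to Proposition~\ref{prop:ergodicReal}. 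So your proof is not merely equivalent to the paper's; it closes a small gap that the paper's one-line proof glosses over.
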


\begin{proof}
This is immediate from Proposition~\ref{prop:invariantFactorReal}.
%
\end{proof}

\subsection{Spectrum}
We now find the spectrum of the $\kappac{a}{b}$-weighted involutive walks. The Hilbert space $\H_L$ defined
in~\S\ref{subsec:Hilbert} is as follows.

\begin{definition}
Let $\H_L = \{f(x) / \sqrt{\pi_x} : f \in \L^2([0,1]) \}$, with inner product $\langle \ , \ \rangle_{\H_L}$
defined by $\langle f, g \rangle = \int_0^1 (1-x)^a x^{a+b+1} f(x) g(x) \d x$.
\end{definition}

For $e \in \N$, let $V_e$ be the subspace of $\H_L$ of polynomials of degree
strictly less than~$e$, and let $V_{\infty} = \bigcup_{e \in \N} V_e$.
Thus each $V_e$ is a closed linear subspace of $\H_L$
of dimension~$e$ and
by a similar argument to~\S\ref{subsec:trigonometricExample}, $V_\infty$ is a dense subspace of $\H_L$.
By~\eqref{eq:LP}, the analogue of left-multiplication
by the transition matrix~is $L_P : \H_L \rightarrow H_L$, defined by
\begin{equation}
\label{eq:LPp} (L_Pf)(x) = (a+b+1)\binom{a+b}{a}
\int_{1-x}^1 \frac{(1-z)^a (x+z-1)^b}{x^{a+b+1}} f(z) \d z.
\end{equation}
We also need the analogous operator for left-multiplication
by the transition matrix for down-steps,
\begin{equation}
\label{eq:LHp} (L_Hf)(x) = (a+b+1)\binom{a+b}{a}
\int_{0}^x \frac{y^a (x-y)^b}{x^{a+b+1}} f(y) \d y. 
\end{equation}
Changing
variables from $y$ to $1-z$ in the definition of $L_Hf$ shows that
$L_P = L_H J$, where $J : \H_L \rightarrow \H_L$ is the self-adjoint involution
defined by $(Jf)(x) = f(1-x)$, analogous to the matrices $J(n)$ used earlier. 
Since $J$ and $L_P$ preserve $\mathcal{H}_L$, the same holds for $L_H$.
Recall from Theorem~\ref{thm:spectrum} 
that the eigenvalues of the discrete $\gammac{a}{b}$-weighted involutive walk on $\{0,\ldots, n-1\}$ are
$(-1)^d\lambdac{a}{b}{d}$ for $0 \le d < n$, where
$\lambdac{a}{b}{d} = \binom{a+d}{d}/ \binom{a+b+d+1}{d}$. 

\begin{lemma}\label{lemma:ctsHtransform}
Let $d \in \N_0$. We have $L_H(x^d) = \lambdac{a}{b}{d} x^d$.
\end{lemma}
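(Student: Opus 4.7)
The plan is to compute $(L_H(y^d))(x)$ directly from the definition~\eqref{eq:LHp} by the homogenizing substitution $y = xw$, $\mathrm{d}y = x\,\mathrm{d}w$. This turns the integral over $[0,x]$ into one over $[0,1]$ whose integrand splits as a power of $x$ times a pure beta-type integral in $w$. Explicitly, the numerator becomes $(xw)^{a+d}(x-xw)^b = x^{a+b+d}w^{a+d}(1-w)^b$, and after multiplying by the Jacobian $x$ and dividing by the normalization $x^{a+b+1}$, all $x$-factors collapse to $x^d$. Thus
\[ (L_H(y^d))(x) = (a+b+1)\binom{a+b}{a}\, x^d \int_0^1 w^{a+d}(1-w)^b\,\mathrm{d}w. \]

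Next I would apply Lemma~\ref{lemma:ctsBasic}(i) with exponents $a+d$ and $b$ in place of $a$ and~$b$ to evaluate the remaining integral as $\bigl( (a+b+d+1)\binom{a+b+d}{a+d} \bigr)^{-1}$. The proof is then a routine simplification of the resulting constant: writing the binomial coefficients as quotients of factorials gives
\[ \frac{(a+b+1)\binom{a+b}{a}}{(a+b+d+1)\binom{a+b+d}{a+d}} = \frac{(a+b+1)!\,(a+d)!}{a!\,(a+b+d+1)!}, \]
which equals $\binom{a+d}{d}/\binom{a+b+d+1}{d} = \lambdac{a}{b}{d}$ after cancelling $d!$.

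There is no real obstacle: the only substantive work is the factorial rearrangement, and the computation is essentially the continuous shadow of the discrete identity proved in Lemma~\ref{lemma:Htransform}, where the multisubset coefficients $\smbinom{\cdot}{\cdot}$ are replaced by ordinary binomial coefficients arising from the beta integral. This also explains the appearance of the same eigenvalues $\lambdac{a}{b}{d}$ on both sides of the discrete/continuous correspondence, as anticipated by the asymptotic $\gammac{a}{b}_{[y,x]} \sim y^a(x-y)^b/(a!b!)$ motivating the definition of $\kappac{a}{b}$.
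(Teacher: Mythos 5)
Your proposal is correct and follows the paper's proof essentially verbatim: the same substitution $y = xw$, the same appeal to Lemma~\ref{lemma:ctsBasic}(i) with exponents $a+d$ and $b$, and the same routine simplification of the resulting constant to $\binom{a+d}{d}/\binom{a+b+d+1}{d} = \lambdac{a}{b}{d}$ (the paper phrases the last step as the binomial identity $\binom{a+b+d}{a+d}/\binom{a+b}{a} = \binom{a+b+d}{d}/\binom{a+d}{a}$ rather than via factorials, but this is the same computation).
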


\begin{proof}
By the substitution $y = xw$ in~\eqref{eq:LHp}
and Lemma~\ref{lemma:ctsBasic}(i) we get
\begin{align*}
L_H(x^d) &= (a+b+1)\binom{a+b}{a} \int_0^x \frac{y^a (x-y)^b}{x^{a+b+1}} y^d \, \mathrm{d} y \\
&= (a+b+1) \binom{a+b}{a} x^d \int_0^1 w^{a+d}(1-w)^b \, \mathrm{d} w \\
&= \frac{(a+b+1)\binom{a+b}{a} }{(a+b+d+1)\binom{a+b+d}{a+d} } x^d .\end{align*}
Now use
$\binom{a+b+d}{a+d} / \binom{a+b}{a} = \binom{a+b+d}{d} / \binom{a+d}{a}$
to see that the eigenvalue is as claimed.
\end{proof}

We now show that the analogue of Proposition~\ref{prop:practical}(i) holds.

\begin{lemma}\label{lemma:ctsPtransform}
Let $d \in \N$. Then
\[ L_P (x^d) \in (-1)^d\lambdac{a}{b}{d}x^d + V_d. \]
\end{lemma}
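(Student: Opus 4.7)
The plan is to mirror the discrete proof of Proposition~\ref{prop:practical}(i), in which $v(d)$ was the $d$-th column of the Pascal matrix and the key inputs were that $H(\gamma)v(d)=\lambda^{(a,b)}_d v(d)$ together with $J(n)v(d)\in (-1)^dv(d)+V_d$. In the present continuous setting the monomial $x^d$ plays the role of $v(d)$: Lemma~\ref{lemma:ctsHtransform} already says that $L_H$ acts diagonally on monomials with eigenvalue $\lambda^{(a,b)}_d$, and the binomial theorem plays the role of Lemma~\ref{lemma:binomialNew}(iii).

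More precisely, I would observe from the change of variable $y=1-z$ relating~\eqref{eq:LPp} and~\eqref{eq:LHp} that $L_P=L_HJ$, where $(Jf)(x)=f(1-x)$. Hence
\[
L_P(x^d)=L_H\bigl((1-x)^d\bigr)=L_H\!\left(\sum_{k=0}^d(-1)^k\binom{d}{k}x^k\right)
=\sum_{k=0}^d(-1)^k\binom{d}{k}\lambda^{(a,b)}_k x^k,
\]
using linearity of $L_H$ and Lemma~\ref{lemma:ctsHtransform} in the last step. The $k=d$ summand is $(-1)^d\lambda^{(a,b)}_d x^d$, while the remaining summands form a polynomial of degree strictly less than $d$, that is, an element of $V_d$. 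This gives precisely the claim.

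There is no real obstacle here: the whole point of having isolated $L_P=L_HJ$ and of having obtained the clean diagonal action of $L_H$ in Lemma~\ref{lemma:ctsHtransform} is to make this step routine. The only thing to double-check is that $L_P$ and $L_H$ genuinely preserve $\H_L$, but this was already noted after~\eqref{eq:LHp}, so the argument above is complete.
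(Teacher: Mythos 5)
Your proof is correct and follows essentially the same route as the paper: the paper also writes $L_P = L_H J$, notes $J(x^d) = (1-x)^d \in (-1)^d x^d + V_d$, and invokes Lemma~\ref{lemma:ctsHtransform}, merely leaving the binomial expansion implicit where you carry it out explicitly. No issues.
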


\begin{proof}
Since $L_P = L_H J$ and $(L_H J)(x^d) = L_H((1-x)^d) \in (-1)^d x^d + \H_d$,
this is immediate from Lemma~\ref{lemma:ctsHtransform}.
 \end{proof}
 
We can now give a complete description of the spectrum and eigenfunctions for $L_P$.
Let $g_0(x), g_1(x), g_2(x), \ldots $ be the functions obtained by Gram--Schmidt orthonormalization
applied to $1$, $x$, $x^2, \ldots $, regarded as elements of the Hilbert space $\H_L$.

\begin{theorem}\label{thm:ctsLP}
The operator $L_P : \H_L \rightarrow \H_L$ 
is compact and self-adjoint. It has eigenvalues $(-1)^d\lambdac{a}{b}{d}$ 
for $d \in \N_0$. The normalized eigenfunction for $(-1)^d\lambdac{a}{b}{d}$ is $g_d$.
The eigenfunctions $g_0, g_1, \ldots $ are an orthonormal Hilbert space basis
for~$\H$ and $L_P(f) = \sum_{d=0}^\infty (-1)^d\lambdac{a}{b}{d} \langle g_d, f \rangle g_d $
for each $f \in \H$.
\end{theorem}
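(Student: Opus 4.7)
The plan is to combine the compact self-adjoint operator machinery from~\S\ref{subsec:Hilbert} with the polynomial-preservation property established in Lemma~\ref{lemma:ctsPtransform}. First I would verify that the kernel $K$ from~\eqref{eq:K}, specialized to $\alpha_y = y^a$ and $\beta_{[y,x]} = (x-y)^b$, belongs to $L^2([0,1]^2)$. Up to multiplicative constants, Lemma~\ref{lemma:ctsBasic}(ii) gives
\[ K(x,z)^2 \propto [x+z \ge 1]\frac{(1-x)^a(1-z)^a(x+z-1)^{2b}}{x^{a+b+1}z^{a+b+1}}. \]
After the substitution $x = 1-s$, $z = 1-t$, the integration region becomes the simplex $s+t \le 1$ in $[0,1]^2$, and the boundary singularities at each corner are tamed by the factors $s^a$, $t^a$, $(1-s-t)^{2b}$ in the numerator; a further scaling substitution $t = (1-s)u$ in the corner $(s,t) = (1,0)$ confirms integrability there, and the remaining corners are routine. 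Hence $L_P$, being conjugate via an isometric isomorphism to the integral operator with symmetric kernel~$K$, is compact and self-adjoint on $\mathcal{H}_L$.

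Next I would use Lemma~\ref{lemma:ctsPtransform} to deduce that each $V_d$ is invariant under $L_P$ and that the matrix of $L_P|_{V_n}$ in the monomial basis $1, x, \ldots, x^{n-1}$ is upper triangular with diagonal entries $(-1)^d \lambdac{a}{b}{d}$ for $0 \le d < n$. The ratio computed in \S\ref{sec:gammaSpectrum},
\[ \frac{\lambdac{a}{b}{d}}{\lambdac{a}{b}{d+1}} = \frac{a+b+d+1}{a+d} > 1, \]
shows these signed diagonal entries are all distinct, so $L_P|_{V_n}$ is diagonalizable with these eigenvalues. Because $L_P$ is self-adjoint, eigenfunctions for distinct eigenvalues are orthogonal; the eigenfunction for $(-1)^d \lambdac{a}{b}{d}$ lies in $V_{d+1}$ and is orthogonal to $V_d$. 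Since $V_d$ has codimension $1$ in $V_{d+1}$, this eigenfunction is uniquely determined up to sign and, after normalization, coincides with the Gram--Schmidt orthonormalization $g_d$ of $x^d$ relative to $V_d$.

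Finally, I would establish completeness via the Weierstrass approximation theorem: polynomials are uniformly dense in $C([0,1])$ and hence dense in the weighted $L^2$-space defining $\mathcal{H}_L$, so $V_\infty$ is dense in $\mathcal{H}_L$. By the spectral theorem for compact self-adjoint operators, $\mathcal{H}_L$ is the Hilbert-space direct sum of the closed eigenspaces of $L_P$; any further eigenfunction would be orthogonal to every $g_d$ and hence to $V_\infty$, so must vanish. Therefore $\{g_d\}_{d \in \mathbb{N}_0}$ is a complete orthonormal basis of $\mathcal{H}_L$ and the spectral expansion $L_P f = \sum_d (-1)^d \lambdac{a}{b}{d} \langle g_d, f \rangle g_d$ follows. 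The main obstacle is the kernel integrability check, since the intersection of the constraint $x+z \ge 1$ with the boundary corners of $[0,1]^2$ makes several singular factors interact; once compactness and self-adjointness are secured, the remaining argument is a clean combination of finite-dimensional invariance with the Weierstrass density of polynomials.
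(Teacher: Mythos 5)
Your proposal is correct and follows essentially the same route as the paper: self-adjointness from the symmetry of the kernel, identification of the eigenfunctions with the Gram--Schmidt polynomials $g_d$ via Lemma~\ref{lemma:ctsPtransform} together with orthogonality of eigenfunctions for the distinct eigenvalues $(-1)^d\lambdac{a}{b}{d}$, and completeness from the density of polynomials in $\H_L$. The only point of divergence is compactness, which you establish up front by checking the Hilbert--Schmidt condition $K \in \L^2([0,1]^2)$ (a correct computation, and the route suggested by the general setup of \S\ref{subsec:Hilbert}), whereas the paper deduces compactness at the end from the fact that $\lambdac{a}{b}{d} \rightarrow 0$ once the complete orthonormal eigenbasis is in hand.
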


\begin{proof}
Let $f$, $g \in \H_L$. Let $D = (a+b+1)\binom{a+b}{a}$. Since
\begin{align*} \langle f , L_P g \rangle_H
&= 
D\int_0^1 (1-x)^a x^{a+b+1} f(x) \int_{1-x}^1 \frac{(1-z)^a (x+z-1)^{a+b+1}}{x^{a+b+1}} g(z) \d z \\
&= D \int\!\!\! \int_{(x,z) \in [0,1]^2 \atop x+z \ge 1} (1-x)^a (1-z)^a (x+z-1)^{a+b+1} f(x)g(z) \d x \d z 
\end{align*}
is symmetric with respect to swapping $f$ and $g$ and $x$ and $z$, we have
$\langle f, L_P g \rangle_{\H_L} = \langle L_P f, g\rangle_{\H_L}$.
By induction on $d$, it follows from
Lemma~\ref{lemma:ctsPtransform} that 
for each $d \in \N_0$, there is a unique polynomial 
$g \in V_\infty$ of degree $d$
such that $\langle g, g\rangle_{\H_L} = 1$ and $L_P g = (-1)^d\lambdac{a}{b}{d} g$.
Since the eigenfunctions for distinct eigenvalues of $L_P$ are orthogonal,
we have $g = g_d$. Since the $g_d$ form a Hilbert space basis for ${\H_L}$, 
we have the claimed spectral decomposition of~$L_P$. 
It is clear that~\eqref{eq:chainEigenvalues} that  $\lambdac{a}{b}{d} \rightarrow 0$ as $d \rightarrow \infty$.
Therefore $L_P$ is compact.
\end{proof}

Dually, we defined in~\S\ref{subsec:Hilbert} the
Hilbert space $\H_R = \{f(x)\sqrt{\pi_x} : f \in \L^2([0,1]) \}$, with inner product
$\langle f, g\rangle_{\H_R} = 
\int_0^1 \frac{f(x)g(x)}{(1-x)^ax^{a+b+1}} \d x$. 
The map $S$ which sends $f \in \H_L$ to $(1-x)^a x^{a+b+1} f(x) \in {\H_R}$ is an isometric
isomorphism from $\H_L$ to ${\H_R}$.
Let $R_P : {\H_R} \rightarrow {\H_R}$ 
be the linear operator corresponding to right-multiplication by the transition
matrix of the interval involutive walk, defined by
\[ (R_P f)(x) = (a+b+1)\binom{a+b}{a}\int_{1-x}^1 \frac{(1-x)^a (x+z-1)^b}{z^{a+b+1}} f(z) \d z. \]
Recall that 
$\pi_x = C (1-x)^a x^{a+b+1}$.
Set $g_d'(x) = (1-x)^a x^{a+b+1}g_d$ for $d \in \N_0$.

\begin{theorem}\label{thm:ctsRP}
The operator $R_P : {\H_R} \rightarrow {\H_R}$ 
is compact and self-adjoint. It has eigenvalues $(-1)^d\lambdac{a}{b}{d}$ 
for $d \in \N_0$. The normalized eigenfunction for $(-1)^d\lambdac{a}{b}{d}$ is 
$g_d'$.
The eigenfunctions $g'_0, g'_1, \ldots $ are an orthonormal Hilbert space basis
for~$\mathcal{H}_R$ and $R_P(f) = \sum_{d=0}^\infty (-1)^d\lambdac{a}{b}{d} \langle g'_d, f \rangle g'_d $
for each $f \in {\H_R}$.
\end{theorem}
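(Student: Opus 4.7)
The strategy is to transport the conclusions of Theorem~\ref{thm:ctsLP} from $\H_L$ to $\H_R$ via the natural isometry between these spaces. Concretely, I would introduce the map $S : \H_L \to \H_R$ defined by $(Sf)(x) = (1-x)^a x^{a+b+1} f(x)$, so that in particular $g_d' = Sg_d$ for each $d \in \N_0$. A short calculation matching the two inner products term by term shows that $S$ is an isometric isomorphism: the factor $(1-x)^a x^{a+b+1}$ squared in $\langle Sf, Sg\rangle_{\H_R}$ cancels the same factor in the denominator of the $\H_R$-inner product, leaving precisely $\langle f, g\rangle_{\H_L}$.

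The crux is to verify the intertwining identity $R_P \circ S = S \circ L_P$, which is the continuous analogue of the reversibility relation in Lemma~\ref{lemma:leftRight}. Substituting $(Sf)(z) = (1-z)^a z^{a+b+1} f(z)$ into the definition of $R_P$ cancels the $z^{a+b+1}$ in the denominator of the kernel of $R_P$; multiplying the analogous expression for $(L_P f)(x)$ by $(1-x)^a x^{a+b+1}$ to form $S(L_P f)$ cancels the $x^{a+b+1}$ in the denominator of the kernel of $L_P$. Both sides then reduce to $(a+b+1)\binom{a+b}{a} \int_{1-x}^1 (1-x)^a (1-z)^a (x+z-1)^b f(z)\,\mathrm{d}z$, establishing the identity.

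Once the intertwining is established, the theorem follows immediately by transport of structure: since $R_P = S L_P S^{-1}$ with $S$ an isometric isomorphism of Hilbert spaces, $R_P$ inherits from $L_P$ the properties of being compact and self-adjoint, has the same spectrum $(-1)^d \lambdac{a}{b}{d}$, and has normalized eigenfunctions $S g_d = g_d'$ (with norms preserved by the isometry). The orthonormal basis property and spectral decomposition likewise transfer by applying $S$ to the corresponding statement for $L_P$.

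The main obstacle is the intertwining identity, which is essentially a bookkeeping calculation on the kernels rather than a conceptual difficulty; the only subtle point is keeping track of which variable the operators integrate over (final state for $L_P$, initial state for $R_P$), as emphasised after \eqref{eq:RP}. Everything else is a formal consequence of having an isometric conjugation between the two operators.
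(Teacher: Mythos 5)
Your proposal is correct and follows essentially the same route as the paper: the paper also defines the isometric isomorphism $S : \H_L \to \H_R$, $(Sf)(x) = (1-x)^a x^{a+b+1} f(x)$, notes the conjugation $R_P S = S L_P$, and transports everything from Theorem~\ref{thm:ctsLP}. The only difference is that you spell out the kernel cancellation verifying the intertwining identity, which the paper leaves implicit.
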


\begin{proof}
The operators $R_P$ and $L_P$ are conjugate by $S$, in that $R_P S = SL_P$.
The result is now immediate from
Theorem~\ref{thm:ctsLP}.
\end{proof}

Hence, started at any probability distribution in  ${\H_R}$, the $\kappa^{(a,b)}$-weighted
involutive walk converges, in the Hilbert space sense, to $\pi_x$. As in the discrete
case, the rate of convergence controlled
by the second largest eigenvalue in absolute value, namely $(a+1)/(a+b+2)$.

\begin{remark}
As one might expect from the discrete case,
all the results of this section extend to $a$, $b \in \R^{>-1}$, interpreting the factor $(a+b+1)^{-1}\binom{a+b}{b}^{-1}$ in the norm $N(\kappac{a}{b})$
as the value $\mathrm{B}(a+1,b+1)$ of the Beta integral. For instance, if $a=b=-\mfrac{1}{2}$ then
$N(\kappac{-\sfrac{1}{2}}{-\sfrac{1}{2}})_x = \int_0^x \frac{\mathrm{d}y}{\sqrt{y(1-y)}} = \pi = 
\mathrm{B}(\mfrac{1}{2}, \mfrac{1}{2}).$
\end{remark}

\subsection{Comparing the discrete and continuous}
\label{subsec:discreteContinuous}

\subsubsection*{Jacobi polynomials}
For $f, g \in \H_L$ we have 
\begin{align*}
\langle f, g \rangle_{\H_L} &= \int_{0}^1 (1-x)^a x^{a+b+1} f(x) g(x) \d x \\
&= 2^{-(2a+b+1)} \int_{-1}^1 (1-y)^a (1+y)^{a+b+1} f\bigl( \frac{1+y}{2} \bigr) g\bigl( \frac{1+y}{2}\bigr) \d y.
\end{align*}
It follows that the polynomials $g_d ( \frac{1+y}{2} )$ defined on $[-1,1]$
are orthonormal with respect to the inner product adapted from $\mathcal{H}_L$, namely
$\langle F, G \rangle = \int_{-1}^1 (1-y)^a (1+y)^{a+b+1} F(y)G(y) \d y$.
The families of orthogonal polynomials for weight functions of the form $(1-y)^A (1+y)^B$
are known as the \emph{Jacobi polynomials}. Thus
$g_d( \frac{1+y}{2})$, is up to a scalar, the Jacobi polynomial $J^{(a,a+b+1)}(y)$ of degree $d$ with 
parameters~$a$ and $a+b+1$.

For comparison,
in the discrete setting of \S\ref{subsec:eigenvectors},
for each fixed $n$ we described the right-eigenvectors $\rP{0}, \ldots, \rP{n-1}$ of the $n \times n$ matrix
$P(\gammac{a}{b})$ as the Gram--Schmidt orthonormalization
of the basis $v(0), \ldots, v(n-1)$ of $\mathbb{R}^n$ of columns of Pascal's Triangle,
with respect to the inner product $\langle v, w \rangle = \sum_{x=0}^{n-1} \pi_x v_x w_x$,
where $\pi_x$ is proportional to $\smbinom{n-x}{a}\smbinom{x+1}{a+b+1}$. Observe that the
scaling factor $\pi_x$ for $v_x w_x$ is asymptotically proportional to $(n-x)^a x^{a+b+1}$ when $x$ and $n-x$ are both large.
Thus, rescaling to the interval $[0,1]$ by setting $y = x/n$ and letting $n \rightarrow \infty$
we obtain $(1-y)^a (1+y)^{a+b+1}$, the same weight
function as for the Jacobi polynomials. 
The eigenvectors~$\rP{d}$  are therefore  the discrete
analogues of the Jacobi polynomials.
In particular,  if $\widehat{w}^{(n)}$ is the eigenvector $w(d) \in \R^n$ of $P(\gammac{a}{b})$,
scaled to have maximum entry (in absolute value) $1$,
and $\widehat{J}^{(a,a+b+1)}$ is the Jacobi polynomial $J_d^{(a,a+b+1)}$ with the same scaling imposed,
then $\widehat{w}^{(n)}_{nx} \rightarrow \widehat{J}^{(a,a+b+1)}_d(2x-1)$ as $n \rightarrow \infty$ for each $x \in [0,1]$.
In practice, the convergence is fast: see Figure~2 for an example.
The results of \S\ref{subsec:eigenvectors}
suggest these discrete analogues deserve further combinatorial study.

\begin{figure}
\begin{center}\setlength{\unitlength}{1cm}
\hspace*{-0.2in}\begin{picture}(5,5.5)
\put(-1,0) {\includegraphics[width=3.5in]{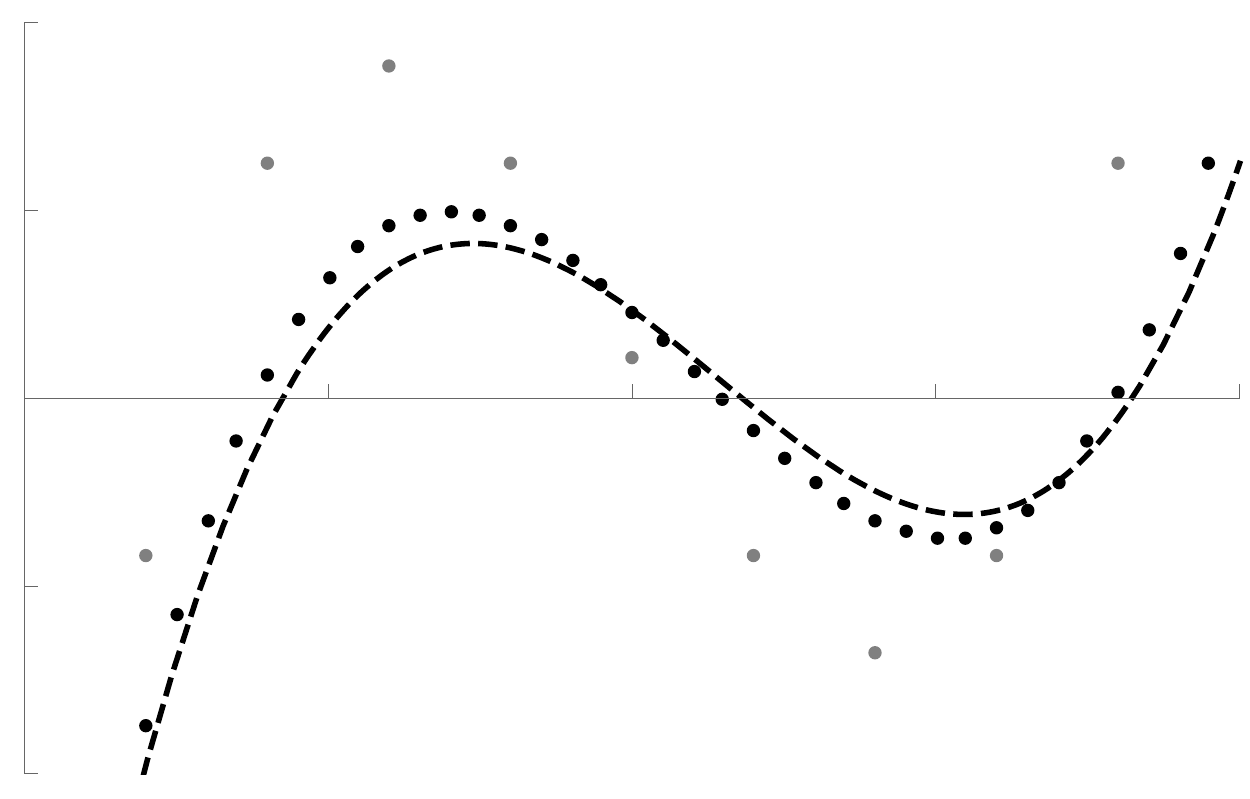}}
\put(-1.55,0.15) {$\scriptstyle -0.4$}
\put(-1.55,1.45) {$\scriptstyle -0.2$}
\put(-1.07,2.8) {$\scriptstyle 0$}
\put(-1.3,4.1) {$\scriptstyle 0.2$}
\put(-1.3,5.4) {$\scriptstyle 0.4$}
\put(1.1,2.575) {$\scriptstyle 0.25$}
\put(3.35,2.575) {$\scriptstyle 0.5$}
\put(5.4,2.575) {$\scriptstyle 0.75$}
\put(7.8,2.575) {$\scriptstyle 1$}
\end{picture}
\end{center}
\caption{The right-eigenvector $\rP{d}$ for $P(\gamma^{(a,b)})$ is the discrete analogue
of the Jacobi polynomial with parameters $a$ and $a+b+1$. The graph above shows 
the case $a=0$, $b=0$, rescaling to $[0,1]^2$, taking $n=10$ (grey dots) and $n=40$ (black dots) in the discrete case.}
\end{figure}

\subsubsection*{Anti-diagonal conjugators and anti-diagonal eigenbasis action}
We saw in \S\ref{sec:gammaSpectrum}
that the column~$v(d)$ of the binomial matrix $B(n)$ is a right-eigenvector of the $n \times n$
down-step matrix $H(\gammac{a}{b})$ for the $\gammac{a}{b}$-weighted involutive walk,
and, in this basis, the transition matrix 
$P(\gammac{a}{b})$ is upper-triangular with diagonal entries $(-1)^d \lambdac{a}{b}{d}$.
Theorem~\ref{thm:GB} shows that in this discrete case this behaviour (in the strong, global form) characterises
the binomial transforms~$\DSL$. Stated in a way we hope is intuitive, Proposition~\ref{prop:ADC} says that the reason
for this characterisation
is that the unique basis of $\mathbb{R}^\infty$ projecting to bases of $\mathbb{R}^n$ in which the
operators $J(n)$ for $n \in \N$ are all upper-triangular with alternating sign entries is the columns of the infinitely extended
Pascal's triangle; in turn this holds because $J$ induces an induces a degree preserving action on polynomials, as in Lemma~\ref{lemma:binomialNew}(iii).

The trigonometric example in~\S\ref{subsec:trigonometricExample} and Theorem~\ref{thm:ctsLP} show that this
result has no immediate continuous analogue.
Indeed, the operator $L_H : \H_L \rightarrow \H_L$ corresponding to left-multiplication by the down-step matrix in the trigonometric example
in~\S\ref{subsec:trigonometricExample} is 
\[ L_H(g) = \pi \int_0^x \frac{\sin \pi z}{1-\cos \pi x} g(z) \, \mathrm{d}z. \]
As seen after Lemma~\ref{lemma:cosAlt}, 
the operator $L_P$ is represented by an (infinite) upper-triangular matrix in its
action on these trigonometric eigenfunctions, with diagonal entries $(-1)^d/(d+1)$. 
By Lemma~\ref{lemma:ctsHtransform} and Theorem~\ref{thm:ctsLP} 
the analogous 
result holds replacing $\cos d\pi x$ with the monomials $x^d$ for the right-multiplication
operator for the $\kappac{a}{b}$-weighted involutive walk. (The Jacobi function $g_d(2x-1)$ defined
on $[0,1]$ has degree $d$,
so is a linear combination of $1,x, \ldots, x^d$.)
Therefore both settings have the continuous analogue of the global anti-diagonal eigenvalue property, but
the eigenfunctions are very different.

%
%

\def\cprime{$'$} \def\Dbar{\leavevmode\lower.6ex\hbox to 0pt{\hskip-.23ex
  \accent"16\hss}D} \def\cprime{$'$}
\providecommand{\bysame}{\leavevmode\hbox to3em{\hrulefill}\thinspace}
\renewcommand{\MR}[1]{\relax }
\providecommand{\MRhref}[2]{%
  \href{http://www.ams.org/mathscinet-getitem?mr=#1}{#2}
}
\providecommand{\href}[2]{#2}

\end{document}